\def\P{{\mathbb P}}
\def\E{{\mathbb E}}
\newcommand{\var}{{\rm {Var}}}
\def\N{\mathbb N}
\def\R{\mathbb R}
\newcommand\comment[1]{{}}
\DeclareMathOperator*{\argmin}{arg\,min}
\numberwithin{equation}{section}
\theoremstyle{plain}
\newtheorem{theorem}{Theorem}[section]
\newtheorem{proposition}{Proposition}[section]
\newtheorem{lemma}{Lemma}[section]
\newtheorem{corollary}{Corollary}[section]
\theoremstyle{definition}
\newtheorem{definition}{Definition}[section]
\theoremstyle{remark}
\newtheorem{remark}{Remark}[section]
\newtheorem{example}{Example}[section]
\begin{document}

\title{{Phase Transitions in Genome-wide Association Studies and Categorical Variable Screenings}} 

\author{Zheng Gao\thanks{Department of Statistics, University of Michigan, Ann Arbor, USA. This research is supported by NSF Grant DMS-1830293, Algorithms for Threat Detection.}\\{gaozheng@umich.edu}}

\date{\vspace{-5ex}}

\maketitle

\begin{abstract}
Motivated by genome-wide association screening studies  (GWAS), we study high-dimensional marginal screenings of categorical variables 
where test statistics have approximate chi-square distributions.
We characterize four new phase transitions in high-dimensional chi-square models, and derive the signal sizes necessary and sufficient for statistical procedures to simultaneously control false discovery (in terms of family-wise error rate or false discovery rate) and missed detection (in terms of family-wise non-discovery rate or false non-discovery rate) in large dimensions.
Remarkably, degrees of freedom in the chi-square distributions do not affect the boundaries in all four phase transitions.
Several well-known procedures are shown to attain these boundaries.
Two new phase transitions are also identified in the Gaussian location model under one-sided alternatives.

We then elucidate on the nature of signal sizes in association tests by characterizing its relationship with marginal frequencies, odds ratio, and sample sizes in $2\times2$ contingency tables. 
This allows us to illustrate an interesting manifestation of the phase transition phenomena in GWAS.
We also show, perhaps surprisingly, that given total sample sizes, balanced designs in such association studies rarely deliver optimal power. 

\end{abstract}



\section{Introduction}
\label{sec:intro}




In this paper, we study large-scale categorical variable screening problems, typified by genome-wide association studies (GWAS) where millions of categorical genetic factors are examined for their potential influence on phenotypic traits simultaneously.

%
Broadly speaking, GWAS aim to discover genetic variations that are linked to traits or diseases of interested, by testing for associations between the subjects' genetic compositions and their phenotypes \cite{bush2012genome}. 
In a typical GWAS with a case-control design, a total of $n$ subjects are recruited,  consisting of $n_1$ subjects possessing a defined trait, and $n_2$ subjects without the trait serving as controls.
The genetic compositions of the subjects are then examined for variations known as single-nucleotide polymorphisms (SNPs) at an array of $p$ genomic locations, and compared between the case and the control group.

Focusing on one specific genomic location, the counts of observed genotypes, if two variants are present, can be tabulated as follows.
\begin{center}
    \begin{tabular}{cccc}
    \hline
    & \multicolumn{2}{c}{Genotype} & \\
    \cline{2-3}
    \# Observations & Variant 1 & Variant 2 & Total by phenotype \\
    \hline
    Cases & $O_{11}$ & $O_{12}$ & $n_1$ \\
    Controls & $O_{21}$ & $O_{22}$ & $n_2$ \\
    \hline
    \end{tabular}
\end{center}
Researchers test for associations between the genotypes and phenotypes using, for example, the Pearson chi-square test with statistic
\begin{equation} \label{eq:chisq-statistic}
    x = \sum_{j=1}^2 \sum_{k=1}^2 \frac{(O_{jk} - E_{jk})^2}{E_{jk}},
\end{equation}
where ${E}_{jk} = (O_{j1}+O_{j2})(O_{1k}+O_{2k})/n$.

Under the mild assumption that the counts $O_{jk}$'s 
the statistic $x$ in \eqref{eq:chisq-statistic} can be shown to have an approximate $\chi^2(\lambda)$ distribution with $\nu=1$ degree of freedom at large sample sizes \citep{agresti2018introduction}. 
Independence between the genotypes and phenotypes would imply a non-centrality parameter $\lambda$ value of zero; if dependence exists, we would have a non-zero $\lambda$ where its value depends on the underlying multinomial probabilities.
More generally, if we have a $J$ phenotypes and $K$ genetic variants, assuming a $J\times K$ multinomial distribution, the statistic will follow approximately a $\chi^2_{\nu}(\lambda)$ distribution with $\nu = (J-1)(K-1)$ degrees of freedom, when sample sizes are large.

These association tests are performed at each of the $p$ locations of interest throughout the whole genome, yielding $p$ statistics having approximately (non-)central chi-square distributions, $\chi_{\nu(i)}^2\left(\lambda(i)\right)$, for $i=1,\ldots,p$,
where $\lambda = (\lambda(i))_{i=1}^p$ is the $p$-dimensional non-centrality parameter.

In the following, we will study the statistical limits of multiple testing in an idealized model where the statistics follow independent chi-square distributions,
\begin{equation} \label{eq:model-chisq}
    x(i) \sim \chi_\nu^2\left(\lambda(i)\right), \quad i=1,\ldots,p,
\end{equation}
where $\chi_\nu^2\left(\lambda(i)\right)$ is a chi-square distributed random variable with $\nu$ degrees of freedom and non-centrality parameter $\lambda(i)$.

Although the number of tested genomic locations $p$ can sometimes exceed $10^5$ or even $10^6$, it is often believed that only a small set of genetic locations have tangible influences on the outcome of the disease or the trait of interest.
Under the stylized assumption of sparsity, $\lambda$ is assumed to have $s$ non-zero components, with $s$ being much smaller than the problem dimension $p$. 
The goal of researchers is two-fold: 1) to test if $\lambda(i)=0$ for all $i$, and 2) to estimate the set $S=\{i:\lambda(i)\neq 0\}$.
In other words, we look to first determine if there are \emph{any} genetic variations associated with the disease; and if there are associations, we want to locate them.
The latter, referred to as the \emph{support recovery problem}, is the focus of this work.

\subsection{Relationship with the additive error model}
\label{subsec:motivation-additive}

Multiple testing problems are a staple of modern data-driven studies, where a large number of hypotheses are formulated and screened for their plausibility simultaneously.
This multiplicity of tests brings along a multitude of challenges, and has been a subject of extensive study in recent years.
In particular, numerous papers on the linear regression setting have been written \citep{genovese2012comparison, ji2012ups, ke2014covariance}, and an extensive but incomplete list can be found in the review article by \citet{jin2016rare}.
On the other hand, the simpler additive error model 
\begin{equation} \label{eq:model-additive}
    x(i) = \mu(i) + \epsilon(i), \quad i=1,\ldots,p,
\end{equation}
still stimulates new results \cite{arias2017distribution, butucea2018variable, gao2020fundamental}.

The chi-square model \eqref{eq:model-chisq} plays an important role in analyzing variable screening problems under omnidirectional alternatives.
For example, under two-sided alternatives in the additive error model \eqref{eq:model-additive} and Gaussianity, unbiased test procedures call for rejecting the hypothesis $\mu(i)=0$ at locations where observations have large absolute values, or equivalently, large squared values.
Squaring on both sides of \eqref{eq:model-additive}, we arrive at Model \eqref{eq:model-chisq} with non-centrality parameters $\lambda(i) = \mu^2(i)$ and degree-of-freedom parameter $\nu =1$.
In this case, the support recovery problem is equivalent to locating the set of observations with mean shifts, $S=\{i:\mu(i)\neq 0\}$, where the mean shifts could take place in both directions.

Therefore, a theory for the chi-square model \eqref{eq:model-chisq} naturally lends itself to the study of two-sided alternatives in the Gaussian additive error model \eqref{eq:model-additive}.
In comparing results in the special case of $\nu=1$ with existing theory on one-sided alternatives \cite{arias2017distribution, gao2020fundamental}, we will be able to quantify if, and how much of a price has to be paid for the additional uncertainty when we have no prior knowledge on the direction of the signals.


\subsection{Asymmetric statistical risks}
\label{subsec:asymmetric-risk}

Another consideration that motivates the current study is the choice of practically relevant statistical risks.
To the best of our knowledge, so far, the literature on multiple testing have been concerned with criteria roughly \emph{symmetric} in terms of false discovery and false non-discovery control.
For example, \citet{arias2017distribution} studied conditions under which \emph{fractions} of false discovery and \emph{fractions} of non-discovery can vanish; \citet{gao2020fundamental} investigated \emph{family-wise error control} for both types of errors; meanwhile, \citet{butucea2018variable} analyzed the problem under the Hamming loss, which penalizes false discoveries and missed signals {equally}.

In applications, however, attitudes towards type I and type II errors are often different.
In the example of GWAS, where the number of candidate locations $p$ could be in the millions, researchers are typically interested in the marginal (location-wise) power of discovery, while exercising stringent (family-wise) false discovery control --- a situation not covered by existing theory.
The observation on this asymmetry leads us to study, and discover, two new phase transitions, both in the additive error model \eqref{eq:model-additive} under one-sided alternatives, and in the chi-square model \eqref{eq:model-chisq}.
The latter, as discussed in Section \ref{subsec:motivation-additive}, entails the additive error model \eqref{eq:model-additive} under two-sided alternatives.

\subsection{Contributions}

Our contribution is three-fold: (1) a thorough study of the theoretical limits of thresholding procedures in the chi-square model \eqref{eq:model-chisq} and an enumeration of practical procedures that attain these limits, 
(2) an illustration of the implications of the phase transition phenomena in genetic association studies, and
(3) two new optimality results in the Gaussian additive error model \eqref{eq:model-additive}, as well as direct comparisons of the one-sided and two-sided alternatives. 

In detail, we show that several commonly used family-wise error rate-control procedures --- including Bonferroni's procedure \cite{dunn1961multiple} --- are asymptotically optimal for the \emph{exact}, and \emph{exact-approximate} support recovery problems (defined in Section \ref{subsec:risks} below) in the chi-square model \eqref{eq:model-chisq}.
We further show that the Benjamini-Hochberg (BH) procedure \cite{benjamini1995controlling} is asymptotically optimal for the \emph{approximate}, and \emph{approximate-exact} support recovery problems (defined, too, in Section \ref{subsec:risks}).
These four results are made precise in our main theorems in Section \ref{sec:chisq-boundaries}.
Under appropriate parametrizations of the signal sizes and sparsity, they establish the phase transitions of support recovery problems in the chi-square model, previewed in Figure \ref{fig:phase-chi-squared}.
Remarkably, the degree-of-freedom parameter does not affect the asymptotic boundaries in any of the four support recovery problems.

We then return to association screenings of categorical variables in Section \ref{sec:signal-size-odds-ratio}, and 
present the consequences of the phase transition in the exact-approximate problem in large-scale genetic association studies.
We do so by characterizing the relationship between the signal size $\lambda$ and the marginal frequencies, odds ratio, and sample sizes for association tests on 2-by-2 contingency tables.
This result, establishing the relationship between sample sizes and signal sizes, is made precise in Proposition \ref{prop:signal-size-odds-ratio}.
A corollary of the proposition, which may be of independent interest, enables us to determine the optimal design for association studies under a fixed budget, and reveals that balanced designs with equal number of cases and controls are often statistically inefficient.
We illustrate practical use of these results in power analysis with numerical examples. 

Finally, we study the Gaussian additive error model \eqref{eq:model-additive} under \emph{one-sided alternatives} in Section \ref{sec:additive-error-model-boundaries}. 
Two new results on the phase transition of support recovery problems are established.
All phase transition boundaries (under suitable parametrizations) coincide with those in the chi-square models.
Following the discussion in Section \ref{subsec:motivation-additive}, this indicates vanishing differences between the difficulties of the one-sided and two-sided alternatives in the Gaussian additive error model \eqref{eq:model-additive}.

The rest of this paper is organized as follows. 
Section \ref{sec:setup} describes the formal set-up for the main results summarized above; these main results are detailed in Sections \ref{sec:chisq-boundaries}, \ref{sec:signal-size-odds-ratio}, and \ref{sec:additive-error-model-boundaries}. 
The phase transitions are demonstrated with numerical simulations in Section \ref{sec:numerical}.
More related literature, comments, and discussions are collected in Section \ref{sec:discussions}.
Appendix \ref{sec:procedures} contains brief review of commonly used procedures in multiple testing.
Proofs are presented in Appendix \ref{sec:proofs}.


\section{Statistical risks, procedures, and the asymptotic regime}
\label{sec:setup}

We begin by establishing the necessary notations and definitions.

\subsection{Statistical risks in support recovery problems}
\label{subsec:risks}

\begin{figure}
      \centering
      \includegraphics[width=0.6\textwidth]{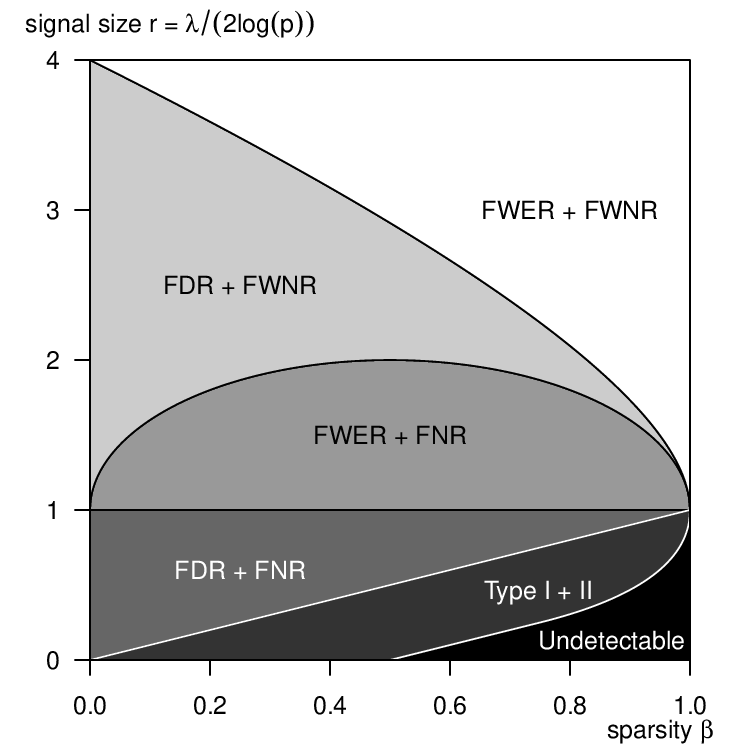}
      \caption{The phase diagram for the high-dimensional chi-square model \eqref{eq:model-chisq}, illustrating the boundaries of the exact support recovery (FWER + FWNR; top curve; Theorem \ref{thm:chi-squared-exact-boundary}),
      the approximate-exact support recovery (FDR + FWNR; second curve from top; Theorem \ref{thm:chi-squared-approx-exact-boundary}),
      the exact-approximate support recovery (FWER + FNR; horizontal line $r=1$; Theorem \ref{thm:chi-squared-exact-approx-boundary}),
      and the approximate support recovery problems (FDR + FNR; tilted line $r=\beta$; Theorem \ref{thm:chi-squared-approx-boundary}).
      The signal detection problem (type I + type II errors of the global test; lower curve) was studied in \citep{donoho2004higher}. 
      In each region of the diagram and above, the annotated statistical risk can be made to vanish, as dimension $p$ diverges. 
      Conversely, the risks has liminf at least one.
      All boundaries are unaffected by the degree-of-freedom.
      All boundaries are identical to those in the Gaussian additive error model \eqref{eq:model-additive} under one-side alternatives; see Section \ref{sec:additive-error-model-boundaries}.
      } 
      \label{fig:phase-chi-squared}
\end{figure}

Recall that in support recovery problems, our goal is to come up with a procedure, denoted $\mathcal R$, that produces a set estimate $\widehat{S}$ of the true index set of relevant variables  $S=\{i:\lambda(i)\neq 0\}$.
Formally, one should write $\widehat{S}(\mathcal{R}(x))$ to reflect the dependence of the set estimate on the procedure $\mathcal{R}$ and on the test statistics $x$; 
for notational convenience, we suppress this dependence and simply write $\widehat{S}$.

For a given procedure $\mathcal{R}$, the \emph{false discovery rate} (FDR) 
and the \emph{false non-discovery rate} (FNR) of a procedure is defined as
\begin{equation} \label{eq:FDR-FNR}
    \mathrm{FDR}(\mathcal{R}) = \E\left[\frac{|\widehat{S}\setminus S|}{\max\{|\widehat{S}|,1\}}\right],
    \quad \text{and} \quad
    \mathrm{FNR}(\mathcal{R}) = \E\left[\frac{|S\setminus \widehat{S}|}{\max\{|{S}|,1\}}\right],
\end{equation}
where the maxima in the denominators resolve the otherwise division by 0 problem. 
Roughly speaking, FDR measures the expected fraction of false findings, while FNR describes the proportion of type II errors among the true signals, and reflects the average marginal power of the procedure.

A more stringent criterion for false discovery is the family-wise error rate (FWER),
and correspondingly, a more stringent criteria for false non-discovery is the family-wise non-discovery rate (FWNR), i.e.,
\begin{equation} \label{eq:FWER-FWNR}
    \mathrm{FWER}(\mathcal{R}) = 1 - \P[\widehat{S} \subseteq S], 
    \quad \text{and} \quad
    \mathrm{FWNR}(\mathcal{R}) = 1 - \P[S \subseteq \widehat{S}].
\end{equation}

We introduce five different ways to quantified statistical risks in support recovery problems, leading to different asymptotic limits. 
Following \cite{arias2017distribution}, we define the risk for \emph{approximate} support recovery as
\begin{equation} \label{eq:risk-approximate}
    \mathrm{risk}^{\mathrm{A}}(\mathcal{R}) = \mathrm{FDR}(\mathcal{R}) + \mathrm{FNR}(\mathcal{R}).
\end{equation}
Analogously, we define the risk for \emph{exact} support recovery as
\begin{equation} \label{eq:risk-exact}
    \mathrm{risk}^{\mathrm{E}}(\mathcal{R}) = \mathrm{FWER}(\mathcal{R}) + \mathrm{FWNR}(\mathcal{R}).
\end{equation}
An intimately related measure of success in the exact support recovery risk is the probability of exact recovery, 
\begin{equation} \label{eq:risk-prob}
    \P[\widehat{S} = S] = 1 - \P[\widehat{S} \neq S].
\end{equation}
The relationship between $\P[\widehat{S} = S]$ and $\mathrm{risk}^{\mathrm{E}}$ will be analyzed in Section \ref{subsec:asymptotics}.

The discussion in Section \ref{subsec:asymmetric-risk} --- and in particular, the GWAS application --- prompts us to consider risks that weigh both the family-wise error rate and the marginal power of discovery.
One such risk metric is what we refer to as the \emph{exact-approximate} support recovery risk
\begin{equation} \label{eq:risk-exact-approx}
    \mathrm{risk}^{\mathrm{EA}}(\mathcal{R}) = \mathrm{FWER}(\mathcal{R}) + \mathrm{FNR}(\mathcal{R}).
\end{equation}
The somewhat strange name is chosen to reflect ``exact false discovery control, and approximate false non-discovery control'', should the risk metric \eqref{eq:risk-exact-approx} vanish asymptotically.

Analogously, we consider the \emph{approximate-exact} support recovery risk
\begin{equation} \label{eq:risk-approx-exact}
    \mathrm{risk}^{\mathrm{AE}}(\mathcal{R}) = \mathrm{FDR}(\mathcal{R}) + \mathrm{FWNR}(\mathcal{R}),
\end{equation}
which places more emphasis on non-discovery control.

Theoretical limits and performance of procedures in support recovery problems will be studied in terms of the five risk metrics \eqref{eq:risk-approximate}, \eqref{eq:risk-exact}, \eqref{eq:risk-prob}, \eqref{eq:risk-exact-approx} and \eqref{eq:risk-approx-exact}, defined above.

\subsection{Thresholding procedures}
\label{subsec:thresholding-procedures}

We shall study the performance of five procedures,
all of which belong to the broad class of thresholding procedures.
\begin{definition}[Thresholding procedures]
A thresholding procedure for estimating the support 
$S:=\{i\, :\, \lambda(i)\neq0\}$ is one that takes on the form
\begin{equation} \label{eq:thresholding-procedure}
    \widehat{S} = \left\{i\,|\,x(i) \ge t(x)\right\},
\end{equation}
where the threshold $t(x)$ may depend on the data $x$.
\end{definition}
Examples of thresholding procedures include ones that aim to control FWER --- Bonferroni's \cite{dunn1961multiple}, Sid\'ak's \citep{vsidak1967rectangular}, Holm's \citep{holm1979simple}, and Hochberg's procedure \citep{hochberg1988sharper} --- as well as procedures that target FDR, such as the Benjamini-Hochberg \cite{benjamini1995controlling} and the Cand\'es-Barber procedure \cite{barber2015controlling}.
Indeed, thresholding procedures \eqref{eq:thresholding-procedure} is such a general class that it contains most (but not all) of the statistical procedures in the multiple testing literature.

We shall restrict our attention to the class of thresholding procedures.
Specifically, the lower bounds that we develop in Theorems \ref{thm:chi-squared-exact-boundary} through \ref{thm:chi-squared-approx-exact-boundary}, and in Theorems \ref{thm:additive-error-exact-approx-boundary} and \ref{thm:additive-error-approx-exact-boundary} below, are only meant to apply to such procedures. 
The optimality of thresholding procedures and the consequences of this restriction will be briefly discussed in Section \ref{sec:discussions}.

\subsection{Asymptotic success and failure of support recovery}
\label{subsec:asymptotics}

We shall work under the asymptotic regime where the problem dimension $p$ diverges.
Specifically, we will work with the triangular array of chi-square models \eqref{eq:model-chisq} indexed by $p$.
Let the non-centrality parameter vectors $\lambda = \lambda_p$ have 
\begin{equation} \label{eq:signal-sparsity}
    |S_p| = \left\lfloor p^{1-\beta} \right\rfloor, \quad \beta\in(0,1)
\end{equation}
non-zero entries, where $\beta$ parametrizes the problem sparsity.
The closer $\beta$ is to 1, the sparser the support $S$; conversely, when $\beta$ is close to 0, the support is dense with many non-null signals.

We further parametrize the range of the non-zero (and perhaps unequal) signals with
\begin{equation} \label{eq:signal-size}
    \underline{\Delta} = 2\underline{r}\log{p}
    \le \lambda(i) \le
    \overline{\Delta} = 2\overline{r}\log{p}, \quad \text{for all}\;\;i\in S_p,
\end{equation}
for some constants $0<\underline{r}\le\overline{r}\le+\infty$.

\begin{remark}
\label{rmk:global-test-boundary}
The parametrization of signal sparsity \eqref{eq:signal-sparsity} and signal sizes  \eqref{eq:signal-size} in the chi-square model seem to be first introduced by \citet{donoho2004higher}, where the signal sizes were assumed equal with magnitude $(2{r}\log{p})$.
It was shown in \cite{donoho2004higher} that a phase transition in the $r$-$\beta$ plane exists for the signal detection problem. 
That is, if $r$ is above a so-called detection boundary, then the global null hypothesis $\lambda(i)=0$ for all $i=1,\ldots,p$ can be told apart from the alternative as $p\to\infty$ with vanishing type I and type II errors; 
otherwise, below the boundary, no test can do better than a random guess.
We shall see that the scaling of sparsity \eqref{eq:signal-sparsity} and signal size \eqref{eq:signal-size} is also suitable for studying the phase transitions of the support recovery problem.
\end{remark}

The criteria for success and failure in support recovery problems under this asymptotic regime are defined as follows.
\begin{definition} \label{def:exact-recovery-success-failure}
We say a sequence of procedures $\mathcal{R} = \mathcal{R}_p$ succeeds asymptotically in the exact (and respectively, exact-approximate, approximate-exact, and approximate) support recovery problem if 
\begin{equation} \label{eq:support-recovery-success}
    \mathrm{risk}^{\mathrm{P}}(\mathcal{R}) \to 0, \quad \text{as}\quad p\to\infty,
\end{equation}
where $\mathrm{P}=\mathrm{E}$ (respectively, $\mathrm{EA}$, $\mathrm{AE}$, $\mathrm{A}$).

Conversely, we say the exact support recovery fails asymptotically in the exact (and respectively, exact-approximate, approximate-exact, and approximate) support recovery problem if 
\begin{equation} \label{eq:support-recovery-faliure}
    \liminf\mathrm{risk}^{\mathrm{P}}(\mathcal{R}) \ge 1, \quad \text{as}\quad p\to\infty,
\end{equation}
where $\mathrm{P}=\mathrm{E}$ (respectively, $\mathrm{EA}$, $\mathrm{AE}$, $\mathrm{A}$).
\end{definition}


We now elaborate on the relationship between the probability of exact recovery and risk of exact support recovery, as promised in Section \ref{subsec:risks}.
\begin{lemma} \label{lemma:risk-exact-recovery-probability}
Let $\mathcal{R} = \mathcal{R}_p$ be the sequence of procedures for support recovery under the chi-square model \eqref{eq:model-chisq}. 
In this case, as $p\to\infty$, we have
\begin{equation} \label{eq:exact-recovery-implies-risk-0}
    \P[\widehat{S} = S] \to 1 \iff \mathrm{risk}^{\mathrm{E}}\to0,
\end{equation}
and
\begin{equation} \label{eq:failure-recovery-implies-risk-1}
    \P[\widehat{S} = S] \to 0 \implies \liminf\mathrm{risk}^{\mathrm{E}}\ge1,
\end{equation}
where the dependence on $p$ was suppressed for notational convenience.
\end{lemma}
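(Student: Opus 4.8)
The plan is to reduce both assertions to a single, model-free sandwich inequality relating $\mathrm{risk}^{\mathrm{E}}$ to the failure probability $\P[\widehat{S}\neq S]$. Introduce the ``no false discovery'' event $A = \{\widehat{S}\subseteq S\}$ and the ``no missed detection'' event $B = \{S\subseteq\widehat{S}\}$. Definitions \eqref{eq:FWER-FWNR} and \eqref{eq:risk-exact} then read $\mathrm{risk}^{\mathrm{E}} = \P[A^{c}] + \P[B^{c}]$, and since exact recovery is precisely the intersection, $\{\widehat{S}=S\} = A\cap B$, the failure event is the union $\{\widehat{S}\neq S\} = A^{c}\cup B^{c}$.

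First I would establish the two-sided bound
\[
    \P[\widehat{S}\neq S] \;\le\; \mathrm{risk}^{\mathrm{E}} \;\le\; 2\,\P[\widehat{S}\neq S].
\]
The left inequality is just the union bound applied to $A^{c}\cup B^{c}$. The right inequality follows from the trivial containments $A^{c}\subseteq A^{c}\cup B^{c}$ and $B^{c}\subseteq A^{c}\cup B^{c}$, which give $\P[A^{c}]\le\P[\widehat{S}\neq S]$ and $\P[B^{c}]\le\P[\widehat{S}\neq S]$; summing the two yields the upper bound. I would emphasize that this sandwich uses nothing about the chi-square model \eqref{eq:model-chisq} --- it is a purely combinatorial relation among the three events and holds for any procedure $\mathcal{R}$.

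With the sandwich in hand, both claims are immediate. For \eqref{eq:exact-recovery-implies-risk-0}, note that $\P[\widehat{S}=S]\to1$ is equivalent to $\P[\widehat{S}\neq S]\to0$; the right-hand inequality then forces $\mathrm{risk}^{\mathrm{E}}\to0$, while conversely the left-hand inequality shows that $\mathrm{risk}^{\mathrm{E}}\to0$ forces $\P[\widehat{S}\neq S]\to0$. For \eqref{eq:failure-recovery-implies-risk-1}, if $\P[\widehat{S}=S]\to0$ then $\P[\widehat{S}\neq S]\to1$, and the left-hand inequality $\mathrm{risk}^{\mathrm{E}}\ge\P[\widehat{S}\neq S]$ gives $\liminf\mathrm{risk}^{\mathrm{E}}\ge1$.

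There is essentially no analytic obstacle here; the only thing to get right is the bookkeeping of which containment yields which direction of the bound, and in particular the observation that the harmless factor of $2$ in the upper bound does not disturb either limit statement. I would close with a brief remark explaining the asymmetry of the lemma --- namely why \eqref{eq:failure-recovery-implies-risk-1} is stated only as a one-directional implication: when $A^{c}$ and $B^{c}$ overlap strongly one can have $\mathrm{risk}^{\mathrm{E}}\to1$ while $\P[\widehat{S}=S]$ stays bounded away from $0$, so the converse genuinely fails and the sandwich cannot be improved to a clean equivalence in that regime.
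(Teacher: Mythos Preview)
Your proof is correct and essentially identical to the paper's: both establish the sandwich $\P[\widehat{S}\neq S]\le\mathrm{risk}^{\mathrm{E}}\le 2\,\P[\widehat{S}\neq S]$ via the same set-theoretic containments and then read off the two conclusions. The only cosmetic difference is that you name the events $A$ and $B$ and invoke the phrase ``union bound'', whereas the paper writes the implications $\{\widehat{S}=S\}\subseteq\{\widehat{S}\subseteq S\}\cap\{S\subseteq\widehat{S}\}$ and $\{\widehat{S}\neq S\}\subseteq\{\widehat{S}\not\subseteq S\}\cup\{S\not\subseteq\widehat{S}\}$ directly.
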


By virtue of Lemma \ref{lemma:risk-exact-recovery-probability}, it is sufficient to study the probability of exact support recovery $\P[\widehat{S}=S]$ in place of $\mathrm{risk}^{\mathrm{E}}$, if we are interested in the asymptotic properties of the risk in the sense of \eqref{eq:support-recovery-success} and \eqref{eq:support-recovery-faliure}.

\section{Phase transitions in the chi-square models}
\label{sec:chisq-boundaries}


A final ingredient we need, before stating our first main result, is a rate at which the nominal levels of FWER or FDR go to zero.
\begin{definition} \label{def:slowly-vanishing}
We say the nominal level of errors $\alpha = \alpha_p$ vanishes slowly, if
\begin{equation} \label{eq:slowly-vanishing-error}
    \alpha\to 0,\quad \text{and} \quad \alpha p^\delta\to\infty \text{  for any } \delta>0.
\end{equation}
\end{definition}
As an example, the sequence of nominal levels $\alpha_p = 1/\log{(p)}$ is slowly vanishing, while the sequence $\alpha_p = 1/\sqrt{p}$ is not.

\subsection{The exact support recovery problem}
\label{subsec:exact-support-recovery-boundary}

The first main result characterizes the phase-transition phenomenon in the exact support recovery problem under the chi-square model.

\begin{theorem} \label{thm:chi-squared-exact-boundary}
Consider the high-dimensional chi-squared model \eqref{eq:model-chisq} with signal sparsity and size as described in \eqref{eq:signal-sparsity} and \eqref{eq:signal-size}.
The function 
\begin{equation} \label{eq:exact-boundary-chisquared}
    g(\beta) = \left(1 + \sqrt{1-\beta}\right)^2
\end{equation}
characterizes the phase transition of exact support recovery problem.
Specifically, if $\underline{r} > {{g}}(\beta)$, then Bonferroni's, Sid\'ak's, Holm's, and Hochberg's procedures with slowly vanishing (see Definition \ref{def:slowly-vanishing}) nominal FWER levels all achieve asymptotically exact support recovery in the sense of \eqref{eq:support-recovery-success}. 

Conversely, if $\overline{r} < {{g}}(\beta)$, then for any thresholding procedure $\widehat{S}$, we have $\P[\widehat{S}=S]\to0$.
Therefore, in view of Lemma \ref{lemma:risk-exact-recovery-probability}, exact support recovery asymptotically fails for all thresholding procedures in the sense of \eqref{eq:support-recovery-faliure}.
\end{theorem}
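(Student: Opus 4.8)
The plan is to prove the two directions separately: the achievability claim reduces to a pair of union bounds, while the converse reduces to controlling a single threshold-free ``separation'' event. Both directions rest on sharp upper and lower tail estimates for the non-central chi-square, for which I would use the distributional identity $\chi^2_\nu(\lambda)\stackrel{d}{=}(\sqrt\lambda+Z)^2+\chi^2_{\nu-1}$ with $Z\sim N(0,1)$ independent of $\chi^2_{\nu-1}$.

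For achievability ($\underline r>g(\beta)$) it suffices to treat Bonferroni's procedure: Holm's, Hochberg's (valid here by independence of the statistics), and Sid\'ak's procedures all reject at least whenever Bonferroni does, so their $\mathrm{FWNR}$ is no larger while their $\mathrm{FWER}$ remains $\le\alpha$. Let $t_\alpha$ be the Bonferroni threshold, i.e. the $(1-\alpha/p)$-quantile of $\chi^2_\nu$. Bonferroni's guarantee gives $\mathrm{FWER}\le\alpha\to0$ directly. For the non-discovery side, a union bound and monotonicity of the lower tail in the non-centrality give $\mathrm{FWNR}\le\sum_{i\in S}\P[\chi^2_\nu(\lambda(i))<t_\alpha]\le|S|\,\P[\chi^2_\nu(\underline\Delta)<t_\alpha]$. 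The slowly-vanishing condition (Definition \ref{def:slowly-vanishing}) forces $\log(1/\alpha)=o(\log p)$, hence $t_\alpha=2\log p\,(1+o(1))$; dropping the nonnegative $\chi^2_{\nu-1}$ term then yields $\P[\chi^2_\nu(\underline\Delta)<t_\alpha]\le\P[Z\ge\sqrt{\underline\Delta}-\sqrt{t_\alpha}]\le p^{-(\sqrt{\underline r}-1)^2(1+o(1))}$. Therefore $\mathrm{FWNR}\le p^{\,1-\beta-(\sqrt{\underline r}-1)^2(1+o(1))}$, which vanishes exactly because $\underline r>g(\beta)$ is equivalent to $(\sqrt{\underline r}-1)^2>1-\beta$. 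Combined with $\mathrm{FWER}\to0$, this gives $\mathrm{risk}^{\mathrm E}\to0$.

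For the converse ($\overline r<g(\beta)$) the key conceptual step is to eliminate the data-dependence of the threshold. For \emph{any} thresholding procedure, $\widehat S=S$ forces $\max_{i\notin S}x(i)<t(x)\le\min_{i\in S}x(i)$, so the statistics must be perfectly separated; hence, uniformly over all such procedures,
\[
  \P[\widehat S=S]\;\le\;\P\Big[\min_{i\in S}x(i)>\max_{i\notin S}x(i)\Big],
\]
and it suffices to show the right-hand side tends to $0$. I would fix an intermediate level $m_p=2\gamma\log p$ with a constant $\gamma<1$ and split on whether the null maximum clears $m_p$:
\[
  \P\Big[\min_{i\in S}x(i)>\max_{i\notin S}x(i)\Big]\le\P\Big[\max_{i\notin S}x(i)<m_p\Big]+\P\Big[\min_{i\in S}x(i)>m_p\Big].
\]
The first term is $(1-p^{-\gamma(1+o(1))})^{\,p-|S|}\le\exp(-p^{\,1-\gamma+o(1)})\to0$ since $\gamma<1$, using the central tail $\P[\chi^2_\nu\ge m_p]=p^{-\gamma(1+o(1))}$ and independence. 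For the second term, bounding each factor with $\lambda(i)\le\overline\Delta$ gives $\P[\min_{i\in S}x(i)>m_p]\le(1-b_p)^{|S|}\le\exp(-b_p|S|)$ with $b_p=\P[\chi^2_\nu(\overline\Delta)\le m_p]=p^{-(\sqrt{\overline r}-\sqrt\gamma)^2(1+o(1))}$, so $b_p|S|=p^{\,1-\beta-(\sqrt{\overline r}-\sqrt\gamma)^2+o(1)}\to\infty$ whenever $(\sqrt{\overline r}-\sqrt\gamma)^2<1-\beta$. Since $\overline r<g(\beta)$ yields the strict inequality $(\sqrt{\overline r}-1)^2<1-\beta$ (the subcase $\overline r\le1$ being immediate, as any $\gamma\in(\overline r,1)$ then makes $b_p\to1$), continuity lets me choose $\gamma<1$ close enough to $1$ that $(\sqrt{\overline r}-\sqrt\gamma)^2<1-\beta$ as well. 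Both terms vanish, so $\P[\widehat S=S]\to0$, and Lemma \ref{lemma:risk-exact-recovery-probability} upgrades this to $\liminf\mathrm{risk}^{\mathrm E}\ge1$.

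The hard part is the converse, and within it the threshold-free reduction is what tames an otherwise intractable supremum over all data-dependent thresholds; the rest is large-deviation bookkeeping. The one delicate point is that every estimate lives on the $\log p$ scale, so the $\nu$-dependent prefactors in the chi-square tails --- the powers of $\log p$ from $t^{\nu/2-1}$ and from the residual $\chi^2_{\nu-1}$ --- must be shown to be swallowed by the $o(1)$ in the exponents. This is precisely the mechanism by which $\nu$ drops out of $g(\beta)$, and I would isolate it in a preliminary tail lemma asserting that, on the $\log p$ scale, both $\P[\chi^2_\nu(2r\log p)>2\gamma\log p]$ (for $\gamma>r$) and $\P[\chi^2_\nu(2r\log p)<2\gamma\log p]$ (for $\gamma<r$) decay like $p^{-(\sqrt r-\sqrt\gamma)^2+o(1)}$ uniformly over the relevant range of $r$, so that the main argument invokes only the Gaussian rate $(\sqrt r-\sqrt\gamma)^2$ and never the value of $\nu$.
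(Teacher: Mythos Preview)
Your proof is correct. The sufficient condition is handled essentially as in the paper: both use the Bonferroni threshold $t_p\sim 2\log p$, the union bound on missed detections, stochastic monotonicity in $\lambda$, and the representation $\chi^2_\nu(\lambda)\stackrel{d}{=}(Z+\sqrt\lambda)^2+\chi^2_{\nu-1}$ to reduce to a Gaussian tail. The converse, however, is organized differently. Both start from the same threshold-free reduction $\P[\widehat S=S]\le\P[\min_{i\in S}x(i)>\max_{i\notin S}x(i)]$, but the paper proceeds via \emph{relative stability}: it shows $M_{S^c}/u_p\to1$ in probability (Corollary~\ref{cor:relative-stability}) and then, through a somewhat delicate argument involving an auxiliary index $i^\dagger$ that minimizes the Gaussian component, shows $m_S/u_p<1-\delta$ with high probability. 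You instead insert a deterministic level $m_p=2\gamma\log p$ and bound $\P[M_{S^c}<m_p]$ and $\P[m_S>m_p]$ separately by exploiting independence and the product form, reducing everything to explicit large-deviation rates. Your route is more elementary and self-contained --- it avoids Gnedenko's rapid-variation criterion and the $i^\dagger$ device --- at the cost of needing the two-sided tail estimate $\P[\chi^2_\nu(2\overline r\log p)\le 2\gamma\log p]=p^{-(\sqrt{\overline r}-\sqrt\gamma)^2+o(1)}$ as a lower bound (which you correctly flag as the content of your ``preliminary tail lemma''). The paper's relative-stability machinery, on the other hand, is reused verbatim in the proofs of Theorems~\ref{thm:chi-squared-exact-approx-boundary} and~\ref{thm:chi-squared-approx-exact-boundary}, so it amortizes better across the paper.
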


The procedures mentioned in Theorem \ref{thm:chi-squared-exact-boundary} are reviewed in Appendix \ref{sec:procedures}. 
Proof of the theorem is found in Appendix \ref{subsec:proof-chi-squared-exact-boundary}. 
Comparisons with parallel results in the Gaussian additive error model \eqref{eq:model-additive} will be drawn in Section \ref{sec:additive-error-model-boundaries}.

\begin{remark} \label{rmk:strong-classification-boundary-2}
Theorem \ref{thm:chi-squared-exact-boundary} predicts that the asymptotic boundaries are the same for all values of the parameter $\nu$.
In simulations (see Section \ref{sec:numerical}), we find this asymptotic prediction to be quite accurate for $\nu\le3$ even in moderate dimensions ($p=100$). 
For $\nu>3$, the phase transitions take place somewhat above the boundary ${g}$.
The behavior is qualitatively similar for the other three phase transitions (see Theorems \ref{thm:chi-squared-exact-approx-boundary}, \ref{thm:chi-squared-approx-boundary}, and \ref{thm:chi-squared-approx-exact-boundary} below).
\end{remark}

\subsection{The exact-approximate support recovery problem}
\label{subsec:exact-approx-support-recovery-boundary}

The next theorem describes the phase transition in the exact-approximate support recovery problem.

\begin{theorem} \label{thm:chi-squared-exact-approx-boundary}
In the context of Theorem \ref{thm:chi-squared-exact-boundary}, 
the function 
\begin{equation} \label{eq:exact-approx-boundary-chisquared}
    \widetilde{g}(\beta) = 1
\end{equation}
characterizes the phase transition of exact-approximate support recovery problem.
Specifically, if $\underline{r} > \widetilde{g}(\beta)$, then the procedures listed in Theorem \ref{thm:chi-squared-exact-boundary} with slowly vanishing nominal FWER levels achieve asymptotically exact-approximate support recovery in the sense of \eqref{eq:support-recovery-success}. 

Conversely, if $\overline{r} < \widetilde{g}(\beta)$, then for any thresholding procedure $\widehat{S}$, the exact-approximate support recovery fails in the sense of \eqref{eq:support-recovery-faliure}.
\end{theorem}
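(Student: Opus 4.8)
The plan is to treat the two directions separately, using in both the leading-order tail behaviour of the (non-)central chi-square distribution, together with the elementary fact that a pool of roughly $p$ independent central $\chi^2_\nu$ variables has maximum concentrated at $2\log p$, whereas signals of size $\lambda(i)\le 2\overline{r}\log p$ with $\overline{r}<1$ fall strictly below that level.

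For achievability (the regime $\underline{r}>1$) I would analyse Bonferroni's procedure at a slowly vanishing nominal level $\alpha=\alpha_p$ (Definition \ref{def:slowly-vanishing}); its deterministic threshold $t_p$ solves $\P[\chi^2_\nu>t_p]=\alpha_p/p$, and the standard tail $\P[\chi^2_\nu>t]\sim(t/2)^{\nu/2-1}e^{-t/2}/\Gamma(\nu/2)$ together with $-\log\alpha_p=o(\log p)$ gives $t_p=2\log p\,(1+o(1))$. Family-wise control is immediate: by the union bound $\mathrm{FWER}\le|S^c|\,\P[\chi^2_\nu>t_p]\le\alpha_p\to 0$. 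For the false non-discovery rate I would write $\mathrm{FNR}=|S|^{-1}\sum_{i\in S}\P[\chi^2_\nu(\lambda(i))<t_p]$ and use stochastic monotonicity of $\chi^2_\nu(\lambda)$ in $\lambda$ to bound each summand by $\P[\chi^2_\nu(2\underline{r}\log p)<t_p]$. Since $\underline{r}>1$ the signal mean $2\underline{r}\log p$ exceeds $t_p$, and the lower-tail rate $-\log\P[\chi^2_\nu(\lambda)<t]\sim(\sqrt{\lambda}-\sqrt{t})^2/2$ produces a bound of order $p^{-(\sqrt{\underline{r}}-1)^2}\to 0$. Because Holm's and Hochberg's rejection sets contain Bonferroni's while still controlling FWER, and Sid\'ak's threshold is asymptotically identical, the conclusion transfers to all four procedures.

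For the converse (the regime $\overline{r}<1$) I would fix an intermediate level $\rho\in(\overline{r},1)$, set $u_p=2\rho\log p$, and run a dichotomy on the data-dependent threshold $t(x)$ of an arbitrary thresholding procedure. Define $B_p=\{\max_{i\notin S}x(i)\ge u_p\}$ and $C_p=\{\,|\{i\in S:x(i)\ge u_p\}|\le\epsilon|S|\,\}$. Since $\rho<1$ the expected number of nulls above $u_p$ grows like $p^{1-\rho}$ up to logarithmic factors, so independence gives $\P[B_p]\to 1$; since $\rho>\overline{r}$, the per-signal exceedance probability is at most $p^{-(\sqrt{\rho}-\sqrt{\overline{r}})^2}=o(1)$ uniformly (again by monotonicity in $\lambda$), whence Markov's inequality gives $\P[C_p]\to 1$. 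On $B_p\cap C_p$ either $t(x)\le u_p$, so the maximal null is retained and $\widehat{S}\not\subseteq S$, or $t(x)>u_p$, so at least $(1-\epsilon)|S|$ signals lie below the threshold and are missed; in either case $\mathbf{1}[\widehat{S}\not\subseteq S]+|S\setminus\widehat{S}|/|S|\ge 1-\epsilon$ pointwise on this event. Using $|S|\ge 1$ for large $p$ and taking expectations yields $\mathrm{risk}^{\mathrm{EA}}\ge(1-\epsilon)\,\P[B_p\cap C_p]\to 1-\epsilon$, and letting $\epsilon\downarrow 0$ establishes \eqref{eq:support-recovery-faliure}.

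The main obstacle, and the only place requiring genuine care, is the precise asymptotics of the three relevant tails — the central upper tail controlling the null maximum, and the non-central lower and upper tails controlling the achievability FNR and the converse signal concentration, respectively. The point is to confirm that the polynomial-in-$t$ and $\Gamma(\nu/2)$ prefactors, together with the $o(\log p)$ slack from the slowly vanishing level, all sit in lower-order terms, so that the behaviour is governed purely by the exponents $(\sqrt{\underline{r}}-1)^2$ and $(\sqrt{\rho}-\sqrt{\overline{r}})^2$; this is precisely what renders $\widetilde{g}(\beta)\equiv 1$ independent of both the degree of freedom $\nu$ and the sparsity $\beta$. I would therefore isolate, as a preliminary lemma, the uniform (in $i\in S$) tail bounds over the signal range $[2\underline{r}\log p,\,2\overline{r}\log p]$ obtained from the stochastic ordering of $\chi^2_\nu(\lambda)$ in $\lambda$, and then assemble the two halves above.
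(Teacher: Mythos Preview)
Your proposal is correct and follows essentially the same dichotomy as the paper: for sufficiency, Bonferroni's threshold $t_p\sim 2\log p$ controls FWER while signals with $\underline{r}>1$ survive it; for the converse, compare any data-dependent threshold to a fixed level $u_p=2\rho\log p$ with $\overline{r}<\rho<1$, so that either the null maximum is retained or almost all signals are missed. The paper's one simplification over your outline is to insert an intermediate deterministic level $t^*=2q\log p$ (with $1<q<\underline{r}$ for sufficiency, $\overline{r}<q<1$ for the converse) and show only that the relevant tail probabilities tend to $0$ or $1$, rather than the sharper rates $p^{-(\sqrt{\underline{r}}-1)^2}$ and $p^{-(\sqrt{\rho}-\sqrt{\overline{r}})^2}$ you invoke; this sidesteps exactly the care you flag around the $o(\log p)$ slack in $t_p$ and the $\nu$-dependent prefactors.
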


Theorem \ref{thm:chi-squared-exact-approx-boundary} is proved in Appendix \ref{subsec:proof-chi-squared-mix-boundaries}. 

\begin{remark}
The boundary \eqref{eq:exact-approx-boundary-chisquared} was briefly suggested by \citet{arias2017distribution}, who focused exclusively on Gaussian additive error models \eqref{eq:model-additive}.
Unfortunately, it was falsely claimed that the boundary characterized the phase transition of the \emph{exact} support recovery problem, and the alleged proof was left as an ``exercise to the reader''.
This exercise was completed in \cite{gao2020fundamental}, where the correct boundary \eqref{eq:exact-boundary-chisquared} was identified. 

Theorem \ref{thm:chi-squared-exact-approx-boundary} here shows that the boundary \eqref{eq:exact-approx-boundary-chisquared} \emph{does} exist, though for the slightly different \emph{exact-approximate} support recovery problem.
As we will see in Section \ref{sec:additive-error-model-boundaries}, the boundary \eqref{eq:exact-approx-boundary-chisquared} also applies to the exact-approximate support recovery problem in the Gaussian additive error model \eqref{eq:model-additive}.
\end{remark}

\subsection{The approximate support recovery problem}
\label{subsec:approx-support-recovery-boundary}

Our third main result characterizes the phase-transition phenomenon in the approximate support recovery problem in the chi-square model.

\begin{theorem} \label{thm:chi-squared-approx-boundary}
Consider the high-dimensional chi-squared model \eqref{eq:model-chisq} with signal sparsity and size as described in \eqref{eq:signal-sparsity} and \eqref{eq:signal-size}.
The function 
\begin{equation} \label{eq:approx-boundary-chisquared}
    h(\beta) = \beta
\end{equation}
characterizes the phase transition of approximate support recovery problem.
Specifically, if $\underline{r} > {h}(\beta)$, then the Benjamini-Hochberg procedure $\widehat{S}_p$ (defined in Appendix \ref{sec:procedures}) with slowly vanishing (see Definition \ref{def:slowly-vanishing}) nominal FDR levels achieves asymptotically approximate support recovery in the sense of \eqref{eq:support-recovery-success}. 

Conversely, if $\overline{r} < {h}(\beta)$, then approximate support recovery asymptotically fails in the sense of \eqref{eq:support-recovery-faliure} for all thresholding procedures.
\end{theorem}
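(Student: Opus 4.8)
The plan is to reduce both directions to a single large-deviation estimate for chi-square tails and then to dispatch the data-dependent threshold by a dichotomy. The estimate I would establish first is that for a threshold $t = 2u\log p$ and a signal of size $\lambda = 2r\log p$,
\begin{equation}
\P\left(\chi^2_\nu(\lambda) \ge t\right) = p^{-(\sqrt{u} - \sqrt{r})_+^2 + o(1)},
\end{equation}
where $(\cdot)_+$ is the positive part; in particular the null exceedance probability ($r=0$) is $p^{-u+o(1)}$, while a signal with $r>u$ is retained with probability tending to one. I would derive this by writing $\chi^2_\nu(\lambda)$ as $(Z+\sqrt{\lambda})^2$ plus an independent $\chi^2_{\nu-1}$ and applying Mills-ratio bounds; the degree of freedom $\nu$ contributes only polynomial-in-$\log p$ factors, which is precisely why it does not enter the boundary.

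For achievability ($\underline r > \beta$), the false-discovery side is immediate: under the independence in \eqref{eq:model-chisq} the Benjamini--Hochberg procedure controls FDR at its nominal level $\alpha = \alpha_p$ regardless of the alternative, so $\mathrm{FDR}\le\alpha\to0$. The work is to show $\mathrm{FNR}\to0$. I would fix $u_0$ with $\beta<u_0<\underline r$ and set $t_0 = 2u_0\log p$. Since $u_0>\beta$, the expected number of null exceedances above $t_0$ is $p^{1-u_0+o(1)} = o(s)$ with $s = \lfloor p^{1-\beta}\rfloor$, so with high probability the number of rejections at $t_0$ is $R(t_0) = (1+o(1))s$; since $u_0<\underline r$, every signal clears $t_0$ with probability tending to one. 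It then suffices to verify the BH inequality at the candidate cutoff $t_0$: the p-value there is $p^{-u_0}$ while the BH bar is $\alpha R(t_0)/p = \alpha\,p^{-\beta+o(1)}$, and because $\alpha$ is slowly vanishing the inequality $p^{-u_0}\le \alpha\,p^{-\beta+o(1)}$ holds for all large $p$ exactly because $u_0>\beta$. Hence the realized BH threshold is no larger than $t_0$, so BH retains all signals clearing $t_0$, giving $\mathrm{FNR}\to0$.

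For the converse ($\overline r<\beta$), I would fix $u_0$ with $\overline r<u_0<\beta$, set $t_0=2u_0\log p$, and split on the realized threshold $t=t(x)$ of an arbitrary thresholding procedure. Let $V(t_0)$ and $T(t_0)$ denote the numbers of null and signal coordinates exceeding $t_0$. Two concentration facts, each a Chebyshev/Markov bound on a sum of independent Bernoullis, hold on an event $G$ of probability tending to one: since $u_0<\beta$, $V(t_0)\ge(1+o(1))p^{1-u_0}\gg s$; since $u_0>\overline r$, $T(t_0)\le s\,p^{-(\sqrt{u_0}-\sqrt{\overline r})^2+o(1)} = o(s)$. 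On $G\cap\{t\le t_0\}$ all $V(t_0)$ null exceedances are rejected while at most $s$ signals are, so $V/\max(R,1)\ge V(t_0)/(V(t_0)+s)\to1$; on $G\cap\{t>t_0\}$ at most $T(t_0)$ signals are rejected, so $(s-T)/s\ge 1-T(t_0)/s\to1$. Adding the contributions gives
\begin{equation}
\mathrm{FDR}+\mathrm{FNR} \;\ge\; (1-o(1))\,\P\!\left(G\cap\{t\le t_0\}\right) + (1-o(1))\,\P\!\left(G\cap\{t> t_0\}\right) \;=\; (1-o(1))\,\P(G) \to 1,
\end{equation}
which is exactly $\liminf\mathrm{risk}^{\mathrm A}\ge1$.

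The hard part will be the converse's requirement to defeat \emph{every} thresholding procedure simultaneously, including data-adaptive ones; the device that makes this tractable is the dichotomy about the single fixed level $t_0=2u_0\log p$ with $u_0\in(\overline r,\beta)$, which forces any threshold into either the FDR-bad regime ($t\le t_0$) or the FNR-bad regime ($t>t_0$). The remaining care is quantitative: pinning down the non-central tail exponent $(\sqrt u-\sqrt r)_+^2$ with $o(1)$ error, and upgrading the expected-count statements for $V(t_0)$ and $T(t_0)$ to the high-probability event $G$ through second-moment bounds that exploit coordinatewise independence in \eqref{eq:model-chisq}.
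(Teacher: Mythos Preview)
Your proposal is correct and follows the same overall architecture as the paper: the converse via a dichotomy at a fixed level $t_0=2u_0\log p$ with $u_0\in(\overline r,\beta)$ is exactly the paper's argument (the paper phrases it as the pointwise inequality $\mathrm{FDP}+\mathrm{NDP}\ge A(u)\wedge B(u)$ and then shows both $A,B\to1$ at $u=t_0$, which is your split on $\{t\le t_0\}$ versus $\{t>t_0\}$), and the achievability via showing the BH threshold falls below $t_0=2u_0\log p$ with $u_0\in(\beta,\underline r)$ is also the paper's strategy.

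The one noteworthy difference is in how you execute the sufficiency. The paper first proves a separate monotonicity lemma for BH (stochastic ordering of alternatives implies ordering of FNR) to reduce to equal signals $\lambda\equiv\underline\Delta$, and then invokes Eicker's weighted empirical-process result to control $\widehat G(t_0)$. Your route is more elementary: you handle unequal signals directly via the uniform bound $\P(\chi^2_\nu(\lambda(i))<t_0)\le\P(\chi^2_\nu(\underline\Delta)<t_0)\to0$, and you control $R(t_0)$ by first/second-moment bounds on binomial counts rather than Eicker's theorem. This buys you a shorter, self-contained argument; the paper's monotonicity lemma, on the other hand, is a reusable structural fact about BH that it also deploys elsewhere. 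Both are valid, and the difference is one of packaging rather than substance.
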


Theorem \ref{thm:chi-squared-approx-boundary} is proved in Appendix \ref{subsec:proof-chi-squared-mix-boundaries}.

\subsection{The approximate-exact support recovery problem}
\label{subsec:aprox-exact-support-recovery-boundary}

The last phase transition is in terms of the approximate-exact support recovery risk
\eqref{eq:risk-approx-exact}.

\begin{theorem} \label{thm:chi-squared-approx-exact-boundary}
In the context of Theorem \ref{thm:chi-squared-approx-boundary}, the function 
\begin{equation} \label{eq:approx-exact-boundary-chisquared}
    \widetilde{h}(\beta) = \left(\sqrt{\beta} + \sqrt{1-\beta}\right)^2
\end{equation}
characterizes the phase transition of approximate-exact support recovery problem.
Specifically, if $\underline{r} > \widetilde{h}(\beta)$, then the Benjamini-Hochberg procedure with slowly vanishing nominal FDR levels achieves asymptotically approximate-exact support recovery in the sense of \eqref{eq:support-recovery-success}. 

Conversely, if $\overline{r} < \widetilde{h}(\beta)$, then for any thresholding procedure $\widehat{S}$, the approximate-exact support recovery fails in the sense of \eqref{eq:support-recovery-faliure}.
\end{theorem}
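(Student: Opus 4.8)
The plan is to reduce the approximate-exact problem to locating a single data-dependent threshold and analyzing its position relative to two critical levels: the level needed to control FDR, and the level below which all signals lie. Writing $t_u := 2u\log p$, the two governing facts are (i) for a central null the tail obeys $\P[\chi^2_\nu \ge t_u] = p^{-u+o(1)}$ for every fixed $\nu$, so the number of null exceedances above $t_u$ concentrates at $p^{1-u+o(1)}$; and (ii) the minimum of the $|S_p| = \lfloor p^{1-\beta}\rfloor$ signal statistics concentrates, $\min_{i\in S}x(i) = 2\log p\,(\sqrt{r}-\sqrt{1-\beta})_+^2 + o(\log p)$ when all signals have size $2r\log p$. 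Fact (ii) is the extreme-value statement that drives the boundary: writing a $\chi^2_\nu(\lambda)$ signal on the square-root scale as $\sqrt\lambda$ plus $O_P(1)$ noise, the smallest of $p^{1-\beta}$ independent copies is pulled down by $\sqrt{2(1-\beta)\log p}$, and squaring gives the exponent $(\sqrt r - \sqrt{1-\beta})^2$. Matching the two levels $\beta$ and $(\sqrt r - \sqrt{1-\beta})^2$ yields exactly the boundary $\widetilde h(\beta) = (\sqrt\beta+\sqrt{1-\beta})^2$ of \eqref{eq:approx-exact-boundary-chisquared}.

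For the achievability direction ($\underline r > \widetilde h(\beta)$), the FDR half is immediate: under independence the Benjamini--Hochberg procedure controls $\mathrm{FDR}\le \alpha$, so with slowly vanishing nominal level $\alpha = \alpha_p$ one gets $\mathrm{FDR}\to 0$ with no further work. The content lies in the family-wise non-discovery half, $\P[S\subseteq\widehat S]\to 1$. I would show the smallest signal lies inside the BH rejection set by testing the self-consistency inequality $p_{(k)}\le \alpha k/p$ at the rank $k$ equal to the number of statistics exceeding $\min_{i\in S}x(i)$, which is at least $|S_p| = p^{1-\beta}$. By fact (ii) the minimum signal sits near $t_{c_0}$ with $c_0 = (\sqrt{\underline r}-\sqrt{1-\beta})^2 > \beta$, so its $p$-value is $p^{-c_0+o(1)}$, while the right-hand side $\alpha k/p \ge \alpha p^{-\beta}$ has exponent $\beta - o(1)$ (using $-\log\alpha = o(\log p)$). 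Since $c_0 > \beta$, the inequality holds with probability tending to one, so BH rejects at least down to the minimum signal and captures every signal; hence $\mathrm{FWNR}\to 0$ and $\mathrm{risk}^{\mathrm{AE}}\to 0$ in the sense of \eqref{eq:support-recovery-success}.

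For the converse ($\overline r < \widetilde h(\beta)$), it suffices to exhibit failure for signals all of size $2\overline r\log p$, the most favorable admissible configuration. Fix a level $c$ with $(\sqrt{\overline r}-\sqrt{1-\beta})^2 < c < \beta$, possible precisely because $\overline r < \widetilde h(\beta)$, and set $t^\ast = t_c$. For an arbitrary thresholding procedure with (possibly data-dependent) threshold $t(x)$, I split on the event $A = \{t(x)\le t^\ast\}$. On $A$, the number of null exceedances above $t^\ast$ is $p^{1-c+o(1)}$ by fact (i), which dominates $|S_p| = p^{1-\beta}$ because $c<\beta$, so the false-discovery fraction tends to one on $A$ and thus $\mathrm{FDR}\ge \P[A]-o(1)$. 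On $A^c$ the threshold exceeds $t^\ast$, which by fact (ii) lies above the minimum signal with probability tending to one, so at least one signal is missed and $\{S\not\subseteq\widehat S\}$ holds, giving $\mathrm{FWNR}\ge \P[A^c]-o(1)$. Adding the two bounds yields $\liminf \mathrm{risk}^{\mathrm{AE}}\ge \P[A]+\P[A^c]-o(1) = 1$, which is \eqref{eq:support-recovery-faliure}.

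The central-null tail of fact (i) and the extreme-value concentration of fact (ii) carry the whole argument and are the reusable ingredients from the proof of Theorem \ref{thm:chi-squared-approx-boundary}; the only genuinely new piece relative to that approximate-recovery result is replacing the fraction-of-signals (FNR) analysis by the all-signals (FWNR) analysis, which is why the minimum-signal concentration, rather than a typical-signal statement, is decisive. I expect the main obstacle to be making fact (ii) sharp and uniform: one needs the leading-order location of the minimum of $p^{1-\beta}$ independent non-central $\chi^2_\nu(\lambda)$ variables with matching upper and lower deviation bounds, and one must verify that the degree-of-freedom $\nu$ enters only through $o(\log p)$ terms, so that the exponent $(\sqrt r-\sqrt{1-\beta})^2$ --- and hence the boundary --- is independent of $\nu$, as claimed.
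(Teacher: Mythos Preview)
Your approach matches the paper's closely: both arguments introduce an intermediate level $t^\ast = 2q\log p$ and show, for sufficiency, that the BH threshold satisfies $\tau\le t^\ast$ while $\min_{i\in S}x(i)\ge t^\ast$, and for the converse, that any data threshold below $t^\ast$ forces the false-discovery proportion to one while any threshold above $t^\ast$ misses the smallest signal. The paper establishes your fact~(ii) via an order-statistics lemma (Lemma~\ref{lemma:relative-stability-order-statistics}) rather than a direct minimum-concentration statement, precisely to handle the regime you flag as the main obstacle.

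One step as written would fail. In the converse you fix $c$ with $(\sqrt{\overline r}-\sqrt{1-\beta})^2 < c < \beta$, claiming this interval is nonempty because $\overline r < \widetilde h(\beta)$. It need not be: for $\beta<1/2$ and $\overline r < (\sqrt{1-\beta}-\sqrt\beta)^2$ one has $(\sqrt{\overline r}-\sqrt{1-\beta})^2 > \beta$ even though $\overline r < \widetilde h(\beta)$ (e.g.\ $\beta=0.1$, $\overline r=0.2$ gives $0.252>0.1$). The correct lower endpoint is the positive part $(\sqrt{\overline r}-\sqrt{1-\beta})_+^2$, consistent with how you stated fact~(ii); with that correction your choice of $c$ is always possible. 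The paper sidesteps this sign issue by working not with the overall minimum $Z_{[1]}$ but with a carefully chosen intermediate order statistic $Z_{[k]}$, $k=\lfloor s^{1-\beta_0}\rfloor$, tuned so that $Z_{[k]}/\sqrt{2\log p}$ lands inside $(-(\sqrt{\overline r}+\sqrt q),-(\sqrt{\overline r}-\sqrt q))$ regardless of the sign of $\sqrt{\overline r}-\sqrt{1-\beta}$.
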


Theorem \ref{thm:chi-squared-approx-exact-boundary} is proved in Appendix \ref{subsec:proof-chi-squared-exact-boundary}. 

\begin{remark}
Theorems \ref{thm:chi-squared-exact-boundary} through \ref{thm:chi-squared-approx-exact-boundary} allow us to compare, quantitatively, the required signals sizes in support recovery problems, as well as in the global hypothesis testing problem in the chi-square model \eqref{eq:model-chisq}.
As mentioned in Remark \ref{rmk:global-test-boundary}, there exists a phase transition in the global hypothesis testing problem characterized by the boundary
\begin{equation} \label{eq:detection-boundary-chisquare}
    f(\beta) = 
    \begin{cases}
    \left(1-\sqrt{1-\beta}\right)^2, &\beta>3/4 \\
    \max\{0, \beta-1/2\}, &\beta\le3/4,
    \end{cases}
\end{equation}
which was identified in \citet{donoho2004higher}.
Results in this section indicate that at all sparsity levels $\beta\in(0,1)$, the difficulties of the problems in terms of the required signal sizes have the following ordering
$$
f(\beta) < h(\beta) < \widetilde{g}(\beta) < \widetilde{h}(\beta) < g(\beta),
$$
as previewed in Figure \ref{fig:phase-chi-squared}.
The ordering aligns with our intuition that the required signal sizes increase as we move from detection to support recovery problems.
Similarly, more stringent criteria for error control (e.g., FWER compared to FDR) require larger signals.
We can now also compare $\widetilde{g}(\beta)$ and $\widetilde{h}(\beta)$, whose ordering may not be clear from this line of reasoning.
\end{remark}

\section{The phase transition phenomena in GWAS}
\label{sec:signal-size-odds-ratio}

We return to the application of association screenings for categorical variables, and put the results in the previous section to use.
In particular, we focus on the exact-approximate support recovery problem, and demonstrate the consequences of its phase transition (Theorem \ref{thm:chi-squared-exact-approx-boundary}) in genetic association studies.

In order to do so, we must first connect the concept of ``statistical signal size'' $\lambda$ with some key quantities in association tests.
While ``signal size'' likely sounds foreign to most practitioners, it is intimately linked with the concept of ``effect sizes'' --- or odds ratios --- in association studies, which are frequently estimated and reported in GWAS catalogs.
We characterize the relationship between the two quantities in the special, but fairly common case of association tests on 2-by-2 contingency tables in Section \ref{subsec:odds-and-power}.

\subsection{Odds ratios and statistical power}
\label{subsec:odds-and-power}


Consider a 2-by-2 multinomial distribution with marginal probabilities of phenotypes $(\phi_1, \phi_2)$ and genotypes $(\theta_1, \theta_2)$.
The \emph{probability} table (as opposed to the table of multinomial \emph{counts} in the introduction) is as follows.
\begin{center}
    \begin{tabular}{cccc}
    \hline
    & \multicolumn{2}{c}{Genotype} \\
    \cline{2-3}
    Probabilities & Variant 1 & Variant 2 & Total by phenotype \\
    \hline
    Cases & $\mu_{11}$ & $\mu_{12}$ & $\phi_1$ \\
    Controls & $\mu_{21}$ & $\mu_{22}$ & $\phi_2$ \\
    Total by genotype & $\theta_1$ & $\theta_2$ & 1 \\
    \hline
    \end{tabular}
\end{center}
The odds ratio (i.e., ``effect size'') is defined as the ratio of the phenotype frequencies between the two genotype variants,
\begin{equation} \label{eq:odds-ratio}
    \text{R} := \frac{\mu_{11}}{\mu_{21}}\Big/\frac{\mu_{12}}{\mu_{22}}
    = \frac{\mu_{11}\mu_{22}}{\mu_{12}\mu_{21}}.
\end{equation}
The multinomial distribution is fully parametrized by the trio $(\theta_1, \phi_1, R)$.
Odds ratios further away from 1 indicate greater contrasts between the probability of outcomes.
Independence between the genotypes and phenotyes would imply an odds ratio of one, and hence $\mu_{jk} = \phi_j\theta_k$, for all $j,k \in\{1,2\}$.

For a sequence of local alternatives $\mu^{(1)}, \mu^{(2)}, \ldots$, such that $\sqrt{n}(\mu^{(n)}_{jk} - \phi_j\theta_k)$ converges to a constant table $\delta = (\delta_{jk})$, the chi-square test statistics converge in distribution to the non-central chi-squared distribution with non-centrality parameter 
$\lambda = \sum_{j=1}^2 \sum_{k=1}^2 {\delta_{jk}^2}/{(\phi_j\theta_k)}$; see, e.g.,\cite{ferguson2017course}.
Hence, for large samples from a fixed distribution $(\mu_{ij})$, the statistic is well approximated by a $\chi^2_1(\lambda)$ distribution, where
\begin{equation} 
\lambda = n\sum_{j=1}^2 \sum_{k=1}^2 \frac{(\mu_{jk} - \phi_j\theta_k)^2}{\phi_j\theta_k}.
\end{equation}
Power calculations therefore only depend on the $\mu_{jk}$'s through $\lambda=nw^2$, where we define 
\begin{equation} \label{eq:signal-size-chisq}
    w^2:=\lambda/n
\end{equation} 
to be the \emph{signal size per sample}. 
Statistical power would be increasing in $w^2$ for fixed sample sizes.

The next proposition states that the statistical signal size per sample can be parametrized by the odds ratio and the marginals in the probability table.

\begin{proposition} \label{prop:signal-size-odds-ratio}
Consider a 2-by-2 multinomial distribution with marginal distributions $(\phi_1, \phi_2 = 1-\phi_2)$ and $(\theta_1, \theta_2=1-\theta_1)$.
Let signal size $w^2$ be defined as in \eqref{eq:signal-size-chisq}, and odds ratio $\text{R}$ be defined as in \eqref{eq:odds-ratio}. 
If $R=1$, we have $w^2 = 0$; if $R\in(0,1)\cup(1,+\infty)$, then we have
\begin{equation} \label{eq:signal-size-odds-ratio}
    w^2(\text{R}) =
    \frac{1}{4A(\text{R}-1)^2}\left(B+CR-\sqrt{(B+CR)^2-4A(R-1)^2}\right)^2,
\end{equation}
where $A = \phi_1\theta_1\phi_2\theta_2$, $B = \phi_1\theta_1+\phi_2\theta_2$, and $C = \phi_1\theta_2+\phi_2\theta_1$.
\end{proposition}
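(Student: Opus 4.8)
The plan is to exploit the single degree of freedom in a $2\times 2$ table with prescribed margins and to push everything through that one parameter. Fixing $(\phi_1,\phi_2)$ and $(\theta_1,\theta_2)$, I would set $d := \mu_{11}-\phi_1\theta_1$ and use the margin constraints to write $\mu_{12}=\phi_1\theta_2-d$, $\mu_{21}=\phi_2\theta_1-d$, and $\mu_{22}=\phi_2\theta_2+d$. The first key point is the elementary identity that all four deviations from independence coincide up to sign,
\begin{equation*}
    \mu_{11}-\phi_1\theta_1=\mu_{22}-\phi_2\theta_2=d,\qquad
    \mu_{12}-\phi_1\theta_2=\mu_{21}-\phi_2\theta_1=-d,
\end{equation*}
which is immediate from the displayed expressions. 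This reduces the whole problem to understanding the single scalar $d$.

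The second step substitutes this into the signal-size expression. Since every numerator $(\mu_{jk}-\phi_j\theta_k)^2$ equals the common value $d^2$, it factors out and the remaining sum separates,
\begin{equation*}
    w^2=d^2\sum_{j,k}\frac{1}{\phi_j\theta_k}
    =d^2\Big(\tfrac{1}{\phi_1}+\tfrac{1}{\phi_2}\Big)\Big(\tfrac{1}{\theta_1}+\tfrac{1}{\theta_2}\Big)
    =\frac{d^2}{\phi_1\phi_2\,\theta_1\theta_2}=\frac{d^2}{A},
\end{equation*}
where I used $\phi_1+\phi_2=\theta_1+\theta_2=1$. This gives the clean intermediate identity $w^2=d^2/A$, so it only remains to express $d$ through the odds ratio.

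The third step relates $d$ to $R$. Writing $a=\phi_1\theta_1$, $b=\phi_2\theta_2$, $c=\phi_1\theta_2$, $e=\phi_2\theta_1$, so that $ab=ce=A$, $a+b=B$, and $c+e=C$, the definition of $R$ becomes
\begin{equation*}
    R=\frac{(a+d)(b+d)}{(c-d)(e-d)}=\frac{A+Bd+d^2}{A-Cd+d^2}.
\end{equation*}
Clearing denominators yields the quadratic $(R-1)d^2-(B+CR)d+A(R-1)=0$, whose roots are $d=\big[(B+CR)\pm\sqrt{(B+CR)^2-4A(R-1)^2}\big]/(2(R-1))$. Squaring and dividing by $A$ reproduces the stated formula \eqref{eq:signal-size-odds-ratio} exactly, provided one selects the minus sign; the degenerate case $R=1$ forces $d=0$ and hence $w^2=0$, matching the proposition.

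The main obstacle, and the only genuinely nontrivial point, is justifying the choice of root, and I expect the cleanest route to be via the product of the roots together with a feasibility bound. Vieta's formula gives that the two roots multiply to exactly $A$, independently of $R$, so they lie on opposite sides of $\sqrt{A}$ in absolute value. On the other hand, nonnegativity of all cells confines the admissible $d$ to the interval $[-\min(a,b),\,\min(c,e)]$, and by the arithmetic–geometric mean inequality both $\min(a,b)\le\sqrt{ab}=\sqrt{A}$ and $\min(c,e)\le\sqrt{ce}=\sqrt{A}$; hence any probability-valid $d$ satisfies $|d|\le\sqrt{A}$. Since both quadratic roots share the odds ratio $R$ but only the one with $|d|\le\sqrt{A}$ can correspond to an actual table, the feasible root must be the smaller-magnitude one, which is precisely the minus branch in both the $R>1$ and $R<1$ cases. (One can cross-check monotonicity, and thus uniqueness of the feasible table, from $\tfrac{\partial R}{\partial d}\propto (A-d^2)\ge0$ on this interval.) This pins down $d$, and together with $w^2=d^2/A$ completes the proof.
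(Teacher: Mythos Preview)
Your proof is correct and follows essentially the same route as the paper: parametrize the table by the single deviation $d=\mu_{11}-\phi_1\theta_1$, obtain $w^2=d^2/A$, and solve the quadratic $(R-1)d^2-(B+CR)d+A(R-1)=0$ for $d$. Your justification of the root choice via Vieta (product of roots $=A$) together with the feasibility bound $|d|\le\sqrt{A}$ is in fact more explicit than the paper's, which only remarks that $R$ is monotone in $\delta$ on the feasible interval and then asserts the formula; both arguments are valid, but yours pins down the minus branch directly.
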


Proposition \ref{prop:signal-size-odds-ratio} is derived in Appendix \ref{subsec:proof-signal-size-odds-ratio}. 

To understand Proposition \ref{prop:signal-size-odds-ratio}, we illustrate Relation \eqref{eq:signal-size-odds-ratio} for selected values of marginals $\theta_1$ and $\phi_1$ in Figure \ref{fig:signal-vs-odds}.
Observe in the figure that an odds ratio further away from one corresponds to stronger statistical signal per sample, ceteris paribus.
However, this ``valley'' pattern is in general not symmetric around 1, except for balanced marginal distributions ($\phi_1=1/2$ or $\theta_1=1/2$).
While the odds ratio $R$ can be arbitrarily close to 0 or diverge to $+\infty$ for any marginal distribution, the signal sizes $w^2$ are bounded from above by constants that depend only on the marginals.

\begin{figure}
      \centering
      \includegraphics[width=0.49\textwidth]{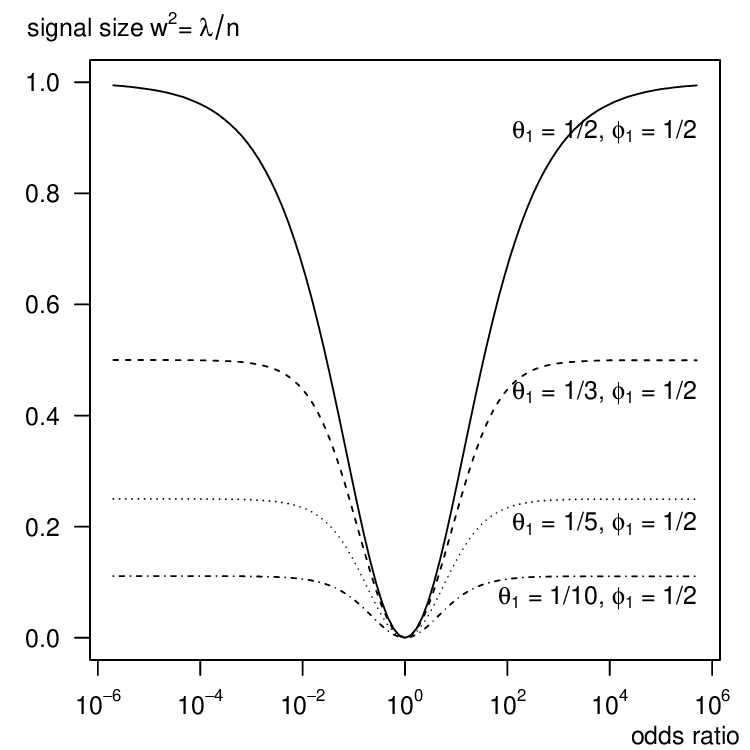}
      \includegraphics[width=0.49\textwidth]{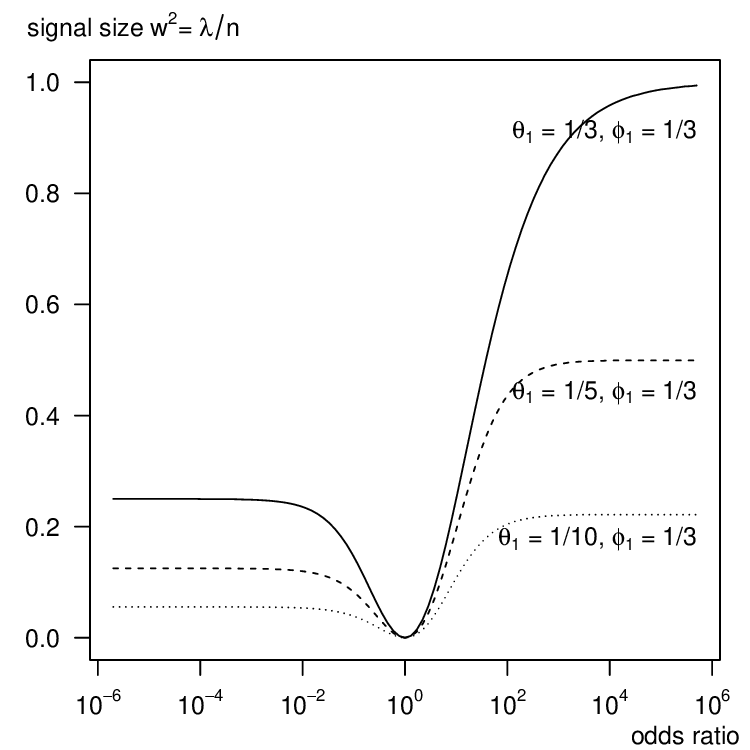}            
      \caption{Signal sizes per sample $w^2$ as functions of odds ratios in 2-by-2 multinomial distributions for selected genotype marginals in balanced (left) and unbalanced (right) designs; see Relation \eqref{eq:signal-size-odds-ratio} in Proposition \ref{prop:signal-size-odds-ratio}.
      For given marginal distributions, extreme odds ratios imply stronger statistical signals at a given sample size.
      However, the signal sizes are bounded above by constants that depend on the marginal distributions; see Relations \eqref{eq:signal-size-upper-bound-1} and \eqref{eq:signal-size-upper-bound-2}.
      } 
      \label{fig:signal-vs-odds}
\end{figure}

\begin{corollary} \label{cor:signal-limits-OR}
The signal size as a function of the odds ratio $w^2(R)$ is decreasing on $(0,1)$ and increasing on $(1,\infty)$, with limits
\begin{equation} \label{eq:signal-size-upper-bound-1}
    \lim_{\text{R}\to0_+} w^2(\text{R}) = \min\left\{\frac{\phi_1\theta_1}{\phi_2\theta_2}, \frac{\phi_2\theta_2}{\phi_1\theta_1}\right\},
\end{equation}
and
\begin{equation} \label{eq:signal-size-upper-bound-2}
    \lim_{\text{R}\to+\infty} w^2(\text{R}) = \min\left\{\frac{\phi_1\theta_2}{\phi_2\theta_1}, \frac{\phi_2\theta_1}{\phi_1\theta_2}\right\}.
\end{equation}
\end{corollary}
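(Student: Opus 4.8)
The plan is to first tame the unwieldy closed form of Proposition \ref{prop:signal-size-odds-ratio} by rationalizing, which collapses the entire statement to the monotonicity of a single scalar quantity. Writing $u = B + CR$ and $v = 2\sqrt{A}\,|R-1|$, formula \eqref{eq:signal-size-odds-ratio} reads $w^2 = (u - \sqrt{u^2 - v^2})^2/v^2$. Multiplying $u - \sqrt{u^2-v^2}$ by its conjugate gives $u - \sqrt{u^2-v^2} = v^2/(u + \sqrt{u^2-v^2})$, so that
\[
  w^2 = \frac{v^2}{\left(u + \sqrt{u^2-v^2}\right)^2} = \left(\frac{t}{1 + \sqrt{1 - t^2}}\right)^2, \qquad t := \frac{v}{u} = \frac{2\sqrt{A}\,|R-1|}{B + CR}.
\]
Before using this I would verify that $t \in [0,1)$ for every $R > 0$, so the reduction is legitimate: expanding $u^2 - v^2 = (C^2 - 4A)R^2 + (2BC + 8A)R + (B^2 - 4A)$ and invoking the identities $B^2 - 4A = (\phi_1\theta_1 - \phi_2\theta_2)^2 \ge 0$ and $C^2 - 4A = (\phi_1\theta_2 - \phi_2\theta_1)^2 \ge 0$ shows every coefficient is nonnegative with the middle one strictly positive, hence $u^2 - v^2 > 0$ and $0 \le t < 1$.

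Second, I would note that $\varphi(t) := t/(1 + \sqrt{1-t^2})$ is strictly increasing on $[0,1)$ (the numerator grows while the denominator shrinks), so $w^2 = \varphi(t)^2$ is an increasing function of $t$, and the monotonicity of $w^2(R)$ is dictated entirely by that of $t(R)$. On $(1,\infty)$ we have $|R-1| = R - 1$, and differentiating $(R-1)/(B+CR)$ gives $(B+C)/(B+CR)^2 > 0$, so $t$ and therefore $w^2$ increase. On $(0,1)$ we have $|R-1| = 1 - R$, and differentiating $(1-R)/(B+CR)$ gives $-(B+C)/(B+CR)^2 < 0$, so $t$ and $w^2$ decrease, which is the claimed monotonicity.

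Third, the two limits fall out by evaluating $t$ at the endpoints and substituting into the rationalized form. As $R \to 0_+$, $t \to 2\sqrt{A}/B$ and $\sqrt{1 - t^2} = |\phi_1\theta_1 - \phi_2\theta_2|/B$, so
\[
  \lim_{R\to 0_+} w^2 = \left(\frac{2\sqrt{A}}{B + |\phi_1\theta_1 - \phi_2\theta_2|}\right)^2 = \frac{A}{\left(\max\{\phi_1\theta_1, \phi_2\theta_2\}\right)^2},
\]
using $B + |\phi_1\theta_1 - \phi_2\theta_2| = 2\max\{\phi_1\theta_1, \phi_2\theta_2\}$; since $A = \phi_1\theta_1 \cdot \phi_2\theta_2$ this reduces to $\min\{\phi_1\theta_1, \phi_2\theta_2\}/\max\{\phi_1\theta_1, \phi_2\theta_2\}$, which is \eqref{eq:signal-size-upper-bound-1}. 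The case $R \to +\infty$ is identical after observing $t \to 2\sqrt{A}/C$, $C^2 - 4A = (\phi_1\theta_2 - \phi_2\theta_1)^2$, and $A = (\phi_1\theta_2)(\phi_2\theta_1)$, which yields \eqref{eq:signal-size-upper-bound-2}.

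The individual computations are routine; the one genuinely useful move — and the step I would flag as the crux — is recognizing that $w^2$ is a \emph{monotone} function of the scalar $t = v/u$. This is what converts the otherwise messy derivative of \eqref{eq:signal-size-odds-ratio} into the transparent monotonicity of the rational function $t(R)$, and makes both the monotonicity and the limit computations fall into place.
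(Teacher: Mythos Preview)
Your proof is correct and takes a genuinely different route from the paper. The paper argues through the $\delta$-parametrization introduced in the proof of Proposition~\ref{prop:signal-size-odds-ratio}: writing $\mu_{11} = \phi_1\theta_1 + \delta$, etc., one has $w^2 = \delta^2/A$ (a parabola in $\delta$), and $R$ is shown to be a strictly increasing bijection of $\delta$ with $R=1$ at $\delta=0$. Monotonicity and the limits then follow by reading off $w^2$ at the endpoints $\delta_{\min} = -\phi_1\theta_1$ and $\delta_{\max} = \min\{\phi_1\theta_2,\phi_2\theta_1\}$, after a WLOG reduction to $\phi_1,\theta_1\le 1/2$ and a symmetry appeal for the remaining cases.

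Your approach bypasses the probabilistic parametrization and works directly with the closed form \eqref{eq:signal-size-odds-ratio}: the rationalization $w^2 = \varphi(t)^2$ with $t = 2\sqrt{A}\,|R-1|/(B+CR)$ is what makes the argument transparent. This buys you a case-free proof---no WLOG assumption on the marginals, no symmetry appeal---at the cost of being purely algebraic and losing the probabilistic meaning that $\delta$ carries. Both proofs are short; yours is more self-contained given only the \emph{statement} of Proposition~\ref{prop:signal-size-odds-ratio}, whereas the paper's leans on intermediate quantities from that proposition's proof.
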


Corollary \ref{cor:signal-limits-OR} immediately implies that balanced designs with roughly equal number of cases and controls are not necessarily the most informative.

For example, in a study where a third of the recruited subjects carry the genetic variant positively correlated with the trait (i.e., $\theta_1=1/3$), an unbalanced design with $\phi_1=1/3$ would maximize $w^2$ at large odds ratios.
This unbalanced design is much more efficient compared to, say, a balanced design with $\phi_1=1/2$.
In the first case, we have $w^2\to1$ as $R\to\infty$; whereas in the second design, $w^2<1/2$ no matter how large $R$ is.
This difference can also be read by comparing the dashed curve ($\theta_1=1/3$, $\phi_1=1/2$) in the left panel of Figure \ref{fig:signal-vs-odds}, with the solid curve ($\theta_1=1/3$, $\phi_1=1/3$) in the right panel of Figure \ref{fig:signal-vs-odds}.

\subsection{Optimal study designs and rare variants}
\label{subsec:optimal-design} 

For a study with a fixed budget, i.e., a fixed total number of subjects $n$, the researcher is free to choose the fraction of cases $\phi_1$ to be included in the study.
A natural question is how this budget should be allocated to maximize the statistical power of discovery, or equivalently, the signal sizes $\lambda=nw^2$.

In principal, Relation \eqref{eq:signal-size-odds-ratio} can be optimized with respect to the fraction of cases $\phi_1$ in order to find optimal designs, if $\theta_1$ is known and held constant.
In practice, this is not the case.
While the fraction of cases can be controlled, the distributions of genotypes \emph{in the study} are often unknown prior to data collection, and can change with the case-to-control ratio.

Fortunately, the conditional distributions of genotypes in the healthy control groups are often estimated by existing studies, and are made available by consortiums such as the NHGRI-EBI GWAS catalog \cite{macarthur2016new}.
We denote the conditional frequency of the first genetic variant in the control group as $(f, 1-f)$, where
$$
f := \mu_{21} / \phi_2.
$$
The multinomial probability is fully parametrized by the new trio: $(f, \phi_1, R)$.
\begin{center}
    \begin{tabular}{cccc}
    \hline
    & \multicolumn{2}{c}{Genotype} \\
    \cline{2-3}
    Probabilities & Variant 1 & Variant 2 & Total by phenotype \\
    \hline
    Cases & $\frac{\phi_1fR}{fR+1-f}$ & $\frac{\phi_1(1-f)}{fR+1-f}$ & $\phi_1$ \\
    Controls & $f(1-\phi_1)$ & $(1-f)(1-\phi_1)$ & $1-\phi_1$ \\
    \hline
    \end{tabular}
\end{center}
Proposition \ref{prop:signal-size-odds-ratio} may also be re-stated in terms of the new parametrization.


\begin{corollary} \label{cor:signal-size-odds-ratio-conditional-frequency}
In the 2-by-2 multinomial distribution with marginals $(\phi_1, \phi_2 = 1-\phi_1)$, and conditional distribution of the variants in the control group $(f, 1-f)$,
Relation \eqref{eq:signal-size-odds-ratio} holds with $\theta_1 = {\phi_1fR}/{(fR+1-f)} + f(1-\phi_1)$ and $\theta_2 = 1-\theta_1$.
\end{corollary}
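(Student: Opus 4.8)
The plan is to read this corollary as a pure re-parametrization of the same family of 2-by-2 multinomial distributions, and then invoke Proposition \ref{prop:signal-size-odds-ratio} verbatim once the genotype marginal $\theta_1$ has been expressed through the new trio $(f,\phi_1,R)$. The crucial observation is that the signal size $w^2$ depends on the distribution only through the cell probabilities $\mu_{jk}$; since $(f,\phi_1,R)$ and $(\theta_1,\phi_1,R)$ parametrize one and the same table, the only real task is to verify that the exhibited table is consistent (correct phenotype marginals and odds ratio) and to track how the genotype marginal transforms. No new analytic content is required beyond Proposition \ref{prop:signal-size-odds-ratio} itself.

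First I would pin down the two control cells directly from the definition $f := \mu_{21}/\phi_2$. Using $\phi_2 = 1-\phi_1$ gives $\mu_{21} = f(1-\phi_1)$ and hence $\mu_{22} = (1-f)(1-\phi_1)$, matching the bottom row of the table preceding the corollary. Next I would solve for the two case cells from the odds-ratio constraint \eqref{eq:odds-ratio}. Writing $\mu_{11} = \phi_1 p$ and $\mu_{12} = \phi_1(1-p)$ with $p := \mu_{11}/\phi_1$ and substituting the control cells into $R = \mu_{11}\mu_{22}/(\mu_{12}\mu_{21})$ reduces, after the common factors cancel, to $p/(1-p) = fR/(1-f)$, whose solution is $p = fR/(fR+1-f)$. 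This reproduces the top row of the table, and a one-line back-substitution confirms that the exhibited table indeed has odds ratio $R$ and phenotype marginals $(\phi_1,1-\phi_1)$, so it lies in the family covered by the proposition.

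Finally, the genotype marginal is obtained by summing the first-variant column,
\[
\theta_1 = \mu_{11}+\mu_{21} = \frac{\phi_1 fR}{fR+1-f} + f(1-\phi_1),
\]
which is precisely the asserted expression; setting $\theta_2 = 1-\theta_1$ and applying Proposition \ref{prop:signal-size-odds-ratio} with this $\theta_1$ then yields Relation \eqref{eq:signal-size-odds-ratio} in the new variables. There is no genuine obstacle here, as every step is an elementary algebraic identity; the only point requiring care is the consistency check in the previous paragraph, which guarantees that the table built from $(f,\phi_1,R)$ is exactly the multinomial distribution to which Proposition \ref{prop:signal-size-odds-ratio} applies.
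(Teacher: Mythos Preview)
Your proposal is correct and matches the paper's treatment: the paper does not even give a standalone proof of this corollary, regarding it as an immediate re-parametrization of Proposition \ref{prop:signal-size-odds-ratio} via the table displayed just before the statement. Your verification that the displayed table has the right phenotype marginals and odds ratio, followed by summing the first column to read off $\theta_1$, is exactly the implicit argument behind the paper's formulation.
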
 

The choice of $\phi_1$ now has a practical solution.

\begin{corollary} \label{cor:optimal-design}
In the context of Corollary \ref{cor:signal-size-odds-ratio-conditional-frequency},
the optimal design $(\phi^*_1, \phi^*_2)$ that maximizes the signal size per sample $w^2$ is prescribed by
\begin{equation} \label{eq:optimal-design}
    \phi_1^* = \frac{fR+1-f}{fR+1-f+\sqrt{R}}, \quad\text{and}\quad 
    \phi_2^* = 1-\phi_1^*.
\end{equation}
\end{corollary}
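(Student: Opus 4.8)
The plan is to first collapse the unwieldy expression \eqref{eq:signal-size-odds-ratio} into a transparent form special to $2\times2$ tables, and then to carry out the maximization over $\phi_1$ by a single change of variables. The starting observation is that, for any $2\times2$ probability table, all four centered cell entries agree up to sign: writing $D := \mu_{11}\mu_{22}-\mu_{12}\mu_{21}$ and using $\phi_j=\mu_{j1}+\mu_{j2}$, $\theta_k=\mu_{1k}+\mu_{2k}$ together with $\mu_{11}+\mu_{12}+\mu_{21}+\mu_{22}=1$, one checks that $\mu_{jk}-\phi_j\theta_k=(-1)^{j+k}D$ for every $j,k$. Substituting into $w^2=\sum_{j,k}(\mu_{jk}-\phi_j\theta_k)^2/(\phi_j\theta_k)$ and factoring the resulting sum of reciprocals, the signal size per sample collapses to
\begin{equation*}
w^2 = \frac{D^2}{\phi_1\phi_2\theta_1\theta_2}.
\end{equation*}
(One may instead derive this directly from \eqref{eq:signal-size-odds-ratio}, but the cell-determinant route is cleaner.)

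Next I would insert the conditional-frequency parametrization of Corollary \ref{cor:signal-size-odds-ratio-conditional-frequency}. Writing $p:=\mu_{11}/\phi_1 = fR/(fR+1-f)$ for the conditional frequency of variant $1$ among cases, the cell probabilities factor across rows, and a direct computation gives $D=\phi_1\phi_2\,(p-f)$ and $\theta_1=\phi_1 p+\phi_2 f$. The crucial point is that $p$ and $f$ are independent of $\phi_1$, so $(p-f)^2$ is a constant multiplier and
\begin{equation*}
w^2 = (p-f)^2\,\frac{\phi_1(1-\phi_1)}{\theta_1\theta_2}.
\end{equation*}
Maximizing $w^2$ over $\phi_1\in(0,1)$ is therefore equivalent to maximizing $M(\phi_1):=\phi_1(1-\phi_1)/(\theta_1\theta_2)$ with $\theta_1=\phi_1 p+(1-\phi_1)f$ and $\theta_2=1-\theta_1$.

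The key simplification is the substitution $v:=\phi_1/(1-\phi_1)$, which maps $(0,1)$ bijectively onto $(0,\infty)$. A short calculation gives $\theta_1/\phi_1 = p+f/v$ and $\theta_2/(1-\phi_1)=(1-f)+v(1-p)$, so that
\begin{equation*}
\frac{1}{M} = \Bigl(p+\tfrac{f}{v}\Bigr)\bigl((1-f)+v(1-p)\bigr) = p(1-p)\,v + \frac{f(1-f)}{v} + \bigl[p(1-f)+f(1-p)\bigr].
\end{equation*}
Since $p,f\in(0,1)$ and $R\neq1$ forces $p\neq f$, the two variable terms have strictly positive coefficients; the map $v\mapsto p(1-p)v+f(1-f)/v$ is strictly convex on $(0,\infty)$, and by AM--GM its unique minimizer is $v^\ast=\sqrt{f(1-f)/[p(1-p)]}$, with the additive constant playing no role. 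This is the one genuinely substantive step, and it is what makes the optimum explicit; everything else is bookkeeping.

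It remains to evaluate $v^\ast$ and translate back. From $p=fR/(fR+1-f)$ one finds $1-p=(1-f)/(fR+1-f)$, hence $p(1-p)=fR(1-f)/(fR+1-f)^2$ and $f(1-f)/[p(1-p)]=(fR+1-f)^2/R$. Thus $v^\ast=(fR+1-f)/\sqrt{R}$, and inverting $\phi_1=v/(1+v)$ yields
\begin{equation*}
\phi_1^\ast = \frac{v^\ast}{1+v^\ast} = \frac{fR+1-f}{fR+1-f+\sqrt{R}},
\end{equation*}
which is exactly \eqref{eq:optimal-design}, with $\phi_2^\ast=1-\phi_1^\ast$ immediate. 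The main obstacle is organizational rather than conceptual: reaching the clean identity $w^2=D^2/(\phi_1\phi_2\theta_1\theta_2)$ and isolating the $\phi_1$-dependence so that the constant factor $(p-f)^2$ can be discarded. Once the problem is recast as minimizing $av+b/v+c$, the AM--GM step is immediate and the explicit optimum drops out.
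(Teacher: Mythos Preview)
Your proof is correct and takes a genuinely different, more streamlined route than the paper. The paper parametrizes by the offset $\delta=\mu_{11}-\phi_1\theta_1$, derives the rational function
\[
w^2=\frac{f(1-f)\phi_1(1-\phi_1)(R-1)^2}{[\phi_1 R+(1-\phi_1)D][\phi_1+(1-\phi_1)D]},\qquad D=fR+1-f,
\]
differentiates in $\phi_1$, reduces the first-order condition to the quadratic $(D^2-R)\phi_1^2-2D^2\phi_1+D^2=0$, and then argues via the sign of the second derivative (with a small case split on the sign of $D^2-R$) that the root $D/(D+\sqrt{R})$ is the unique maximizer. Your argument instead introduces the case conditional frequency $p=fR/(fR+1-f)$, which cleanly isolates the $\phi_1$-independent factor $(p-f)^2$, and then the substitution $v=\phi_1/(1-\phi_1)$ collapses the remaining optimization to minimizing $p(1-p)\,v+f(1-f)/v+\text{const}$, solved in one stroke by AM--GM. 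This avoids the calculus, the quadratic formula, and the case analysis altogether, while the strict convexity of $v\mapsto av+b/v$ gives uniqueness for free. Both routes rest on the same underlying identity $w^2=\delta^2/(\phi_1\phi_2\theta_1\theta_2)$ (your cell-determinant $D$ equals the paper's $\delta$), so the difference is purely in how the one-variable optimization is executed; your version is the cleaner of the two.
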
 

Proof of Corollary \ref{cor:optimal-design} is found in Appendix \ref{subsec:proof-signal-size-odds-ratio}. 

Of particular interest in the genetics literature are genetic variants with very low allele frequencies in the control group (i.e., $f\approx 0$), known as rare variants.
In such cases, Equation \eqref{eq:optimal-design} can be approximated using the Taylor expansion,
\begin{equation} \label{eq:optimal-design-approx}
    \phi_1^* = \frac{1}{1 + \sqrt{R}} + \frac{(R-\sqrt{R})f}{1+\sqrt{R}} + O(f^2).
\end{equation}
To illustrate, for rare and adversarial factors ($f\approx0$ and $R>1$), the optimal $\phi_1^*$ is less than $1/2$.
Therefore, for studies under a fixed budget, controls should constitute the majority of the subjects, in order to maximize power.
On the other hand, for rare and protective factors ($f\approx0$ and $R<1$), the optimal $\phi_1^*$ is greater than $1/2$, and cases should be the majority.

\subsection{Phase transitions in large-scale association screening studies}



Returning to the problem of \emph{high-dimensional} marginal screenings for categorical covariates, we explore the manifestation of the phase transition in the exact-approximate support recovery problem in the genetic context.

Recall Theorem \ref{thm:chi-squared-exact-approx-boundary} predicts that FWER and FNR can be simultaneously controlled in large dimensions if and only if 
\begin{equation}
    r = \frac{\lambda}{2\log{p}} = \frac{w^2n}{2\log{p}} > 1.
\end{equation}
Therefore, if we were to apply FWER-controlling procedures at low nominal levels (say, $5\%$), then the FNR would experience a phase transition in the sense that, if
\begin{equation} \label{eq:power-1-region}
    r>1 \iff w^2 > \frac{2\log{p}}{n},
\end{equation}
then the FNR can be close to 0; otherwise, FNR must be close to 1.

Using the parametric relationship described in Corollary \ref{cor:signal-size-odds-ratio-conditional-frequency} (and Proposition \ref{prop:signal-size-odds-ratio}), 
the inequalities in \eqref{eq:power-1-region} implicitly define regions of $(f, R)$ where associations are discoverable with high power, for a given $\phi_1$.
Further, the boundary of such discoverable regions sharpens as dimensionality diverges. 
We illustrate this phase transition through a numerical example next.

\begin{figure}
    \centering
    \includegraphics[width=0.49\textwidth]{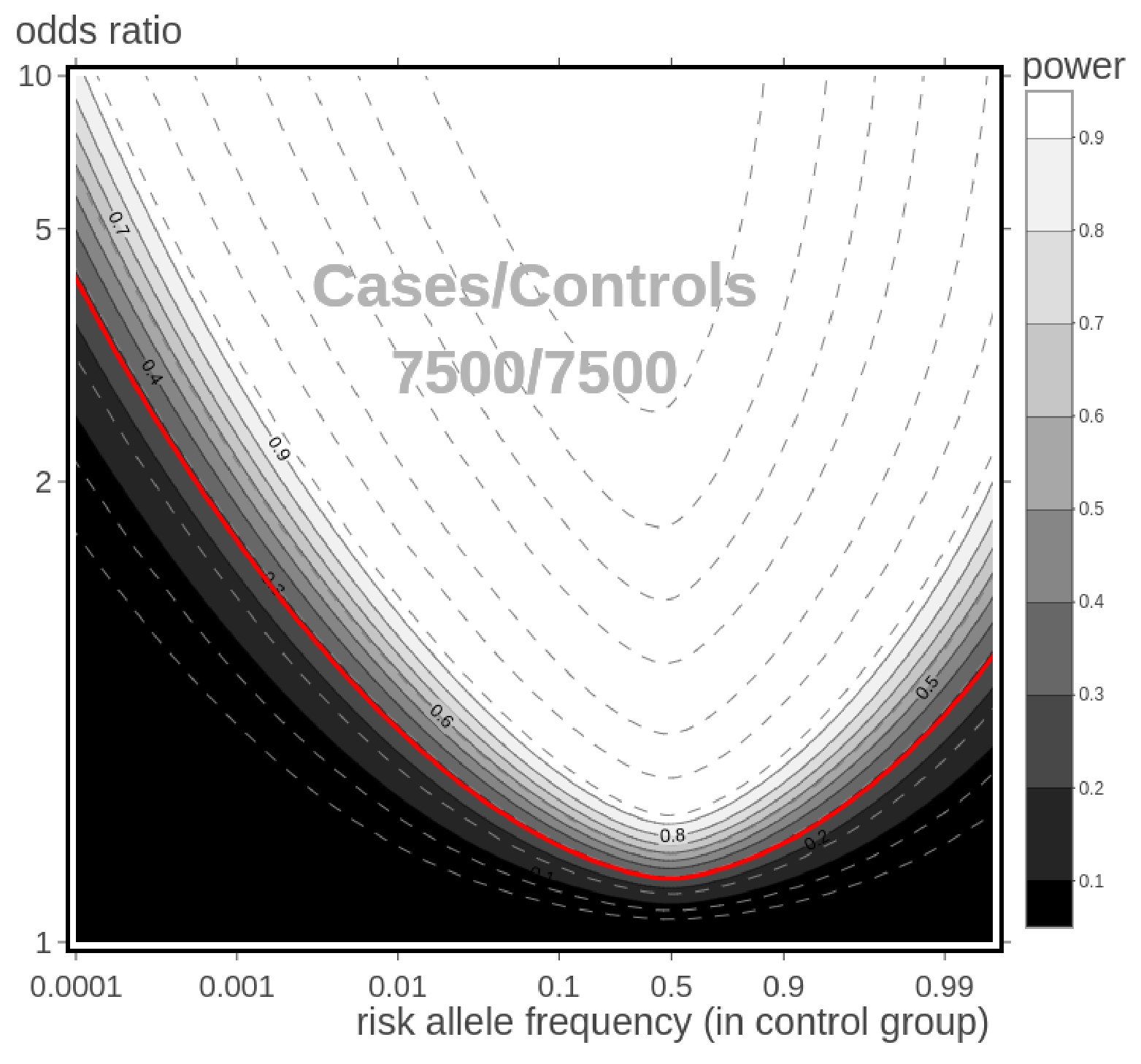}
    \includegraphics[width=0.49\textwidth]{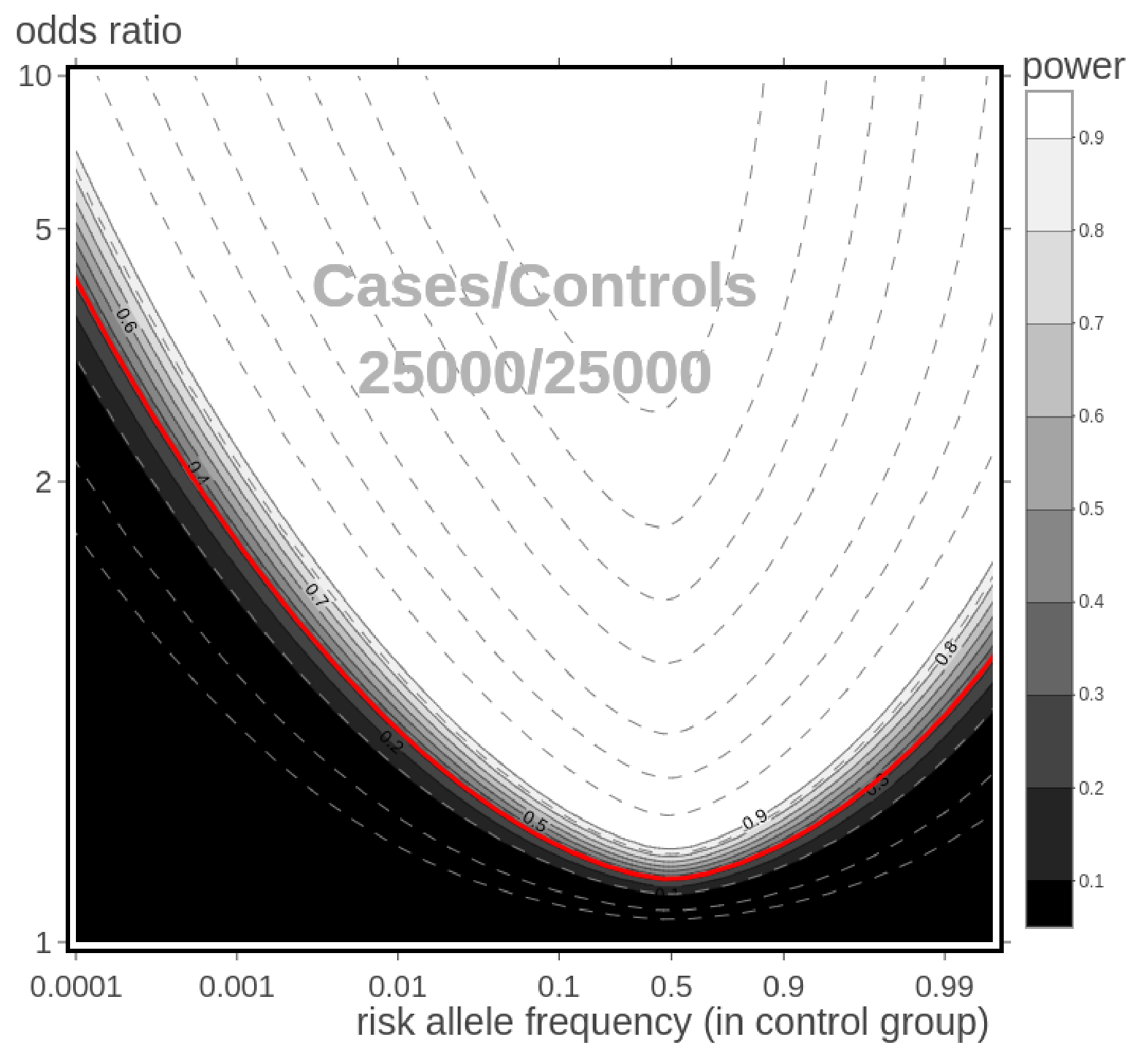}
    \includegraphics[width=0.49\textwidth]{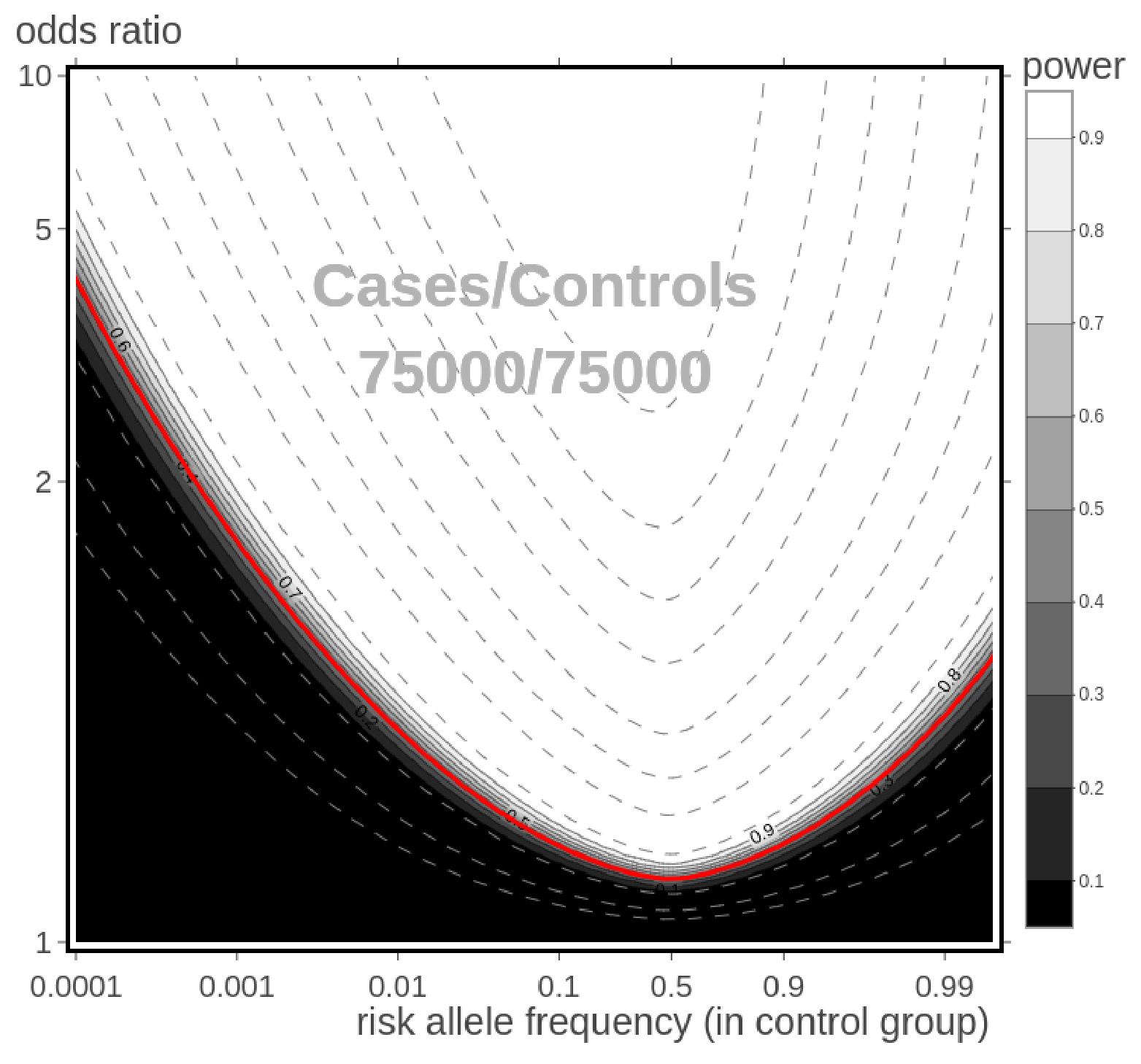}
    \includegraphics[width=0.49\textwidth]{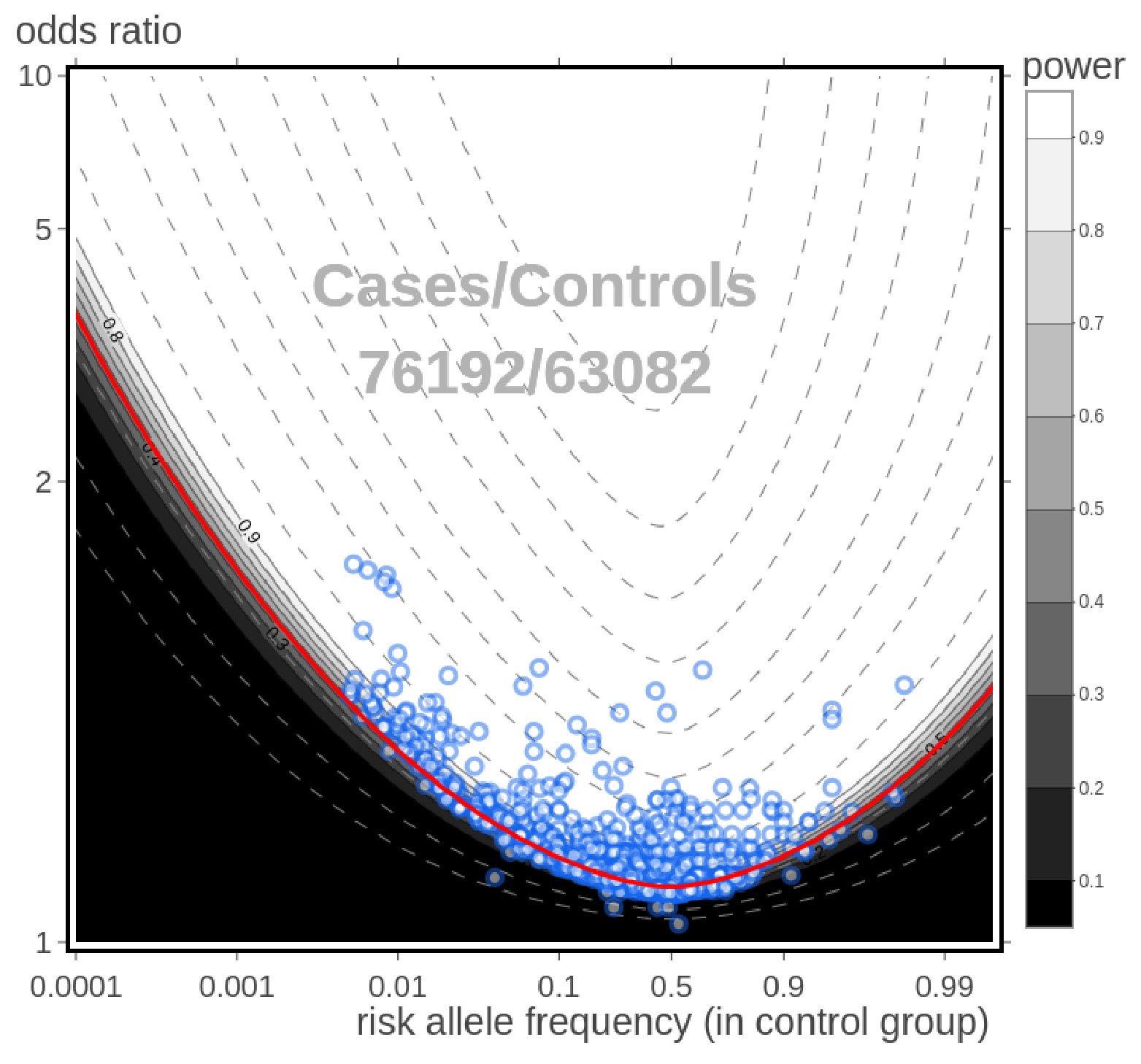}
    \caption{The OR-RAF diagram visualizing the marginal power of discovery in genetic association studies, after applying Bonferroni's procedure with nominal FWER at $5\%$ level. 
    Sample sizes are marked in each panel, and the problem dimensions are, respectively, $p=4$ (upper-left), $p=10^2$ (upper-right), and $p=10^6$ (lower-left), so that $n/\log{p}$ are roughly constant.
    Red curves mark the boundaries ($r=1$) of the phase transition for the exact-approximate support recovery problem; dashed curves are the equi-signal (equi-power) curves.
    The phase-transition in signal sizes $\lambda$ translates into the phase transition in terms of $(f,R)$, and sharpens as $p\to\infty$; see Example \ref{exmp:OR-RAF_phase_transition}.
    In the lower-right panel, we visualize discovered associations (blue circles) in a recent GWA study \cite{michailidou2017association}; the estimated odds ratios and risk allele frequencies are subject to survival bias and should not be taken at their face values; see Remark \ref{rmk:OR-RAF_false_evidence}.} 
    \label{fig:OR-RAF_GWAS}
\end{figure}

\begin{example}
\label{exmp:OR-RAF_phase_transition}
Consider association tests on $2\times2$ contingency tables at $p$ locations as introduced in Section \ref{sec:intro}, where the counts follow 
a multinomial distribution
parametrized by $(f, R, \phi_1)$ as in Section \ref{subsec:optimal-design}.
Assume that the phenotype marginals are fixed at $\phi_1 = \phi_2 = 1/2$.
Applying Bonferroni's procedure with nominal FWER at $\alpha=5\%$ level, we can approximate the marginal power of association tests by
\begin{equation} \label{eq:power-approximation}
    \P[\chi^2_{1}(\lambda)>\chi^2_{1,\alpha/p}],
\end{equation}
where $\chi^2_{1,\alpha/p}$ is the upper $(\alpha/p)$-quantile of a central chi-squared distribution with 1 degree of freedom.
We calculate this marginal power as a function of the parameters $(f,R)$ in three scenarios:
\begin{itemize}
    \item $p=4$, $n=3\times10^4$ 
    \item $p=10^2$, $n=1\times10^5$
    \item $p=10^6$, $n=3\times10^6$
\end{itemize}
and visualize the results as heatmaps\footnote{Since genetic variants can always be relabelled such that Variant 1 is positively associated with Cases, we only produce part of the diagram where $R>1$.
Sample sizes marked in the figure are adjusted by a factor of $1/2$, to reflect the genetic context where a pair of alleles are measured for every individual at every genomic location.} (referred to as OR-RAF diagrams) in Figure \ref{fig:OR-RAF_GWAS}.
These parameter values are chosen so that $\log{p}/n$ are roughly constant (around $4.6\times10^{-5}$).

We also overlay ``equi-signal'' curves, i.e., functions implicitly defined by the equations $r=c$ for a range of $c$ (dashed curves), and highlight the predicted boundary of phase transition for the exact-approximate support recovery problem $r=1$ (red curves).
The change in marginal power clearly sharpens around the predicted boundary $r=1$ as dimensionality diverges.
\end{example}


\begin{remark}
\label{rmk:OR-RAF_false_evidence}
In an attempt to find empirical evidence of our theoretical predictions, we chart the genetic variants associated with breast cancer, discovered in a 2017 study by \citet{michailidou2017association} in an OR-RAF diagram. 
The estimated risk allele frequencies ($f$) and odds ratios ($R$) are taken from the NHGRI-EBI GWAS catalog \cite{macarthur2016new}, and plotted against a power heatmap calculated according to the reported sample sizes. 
See lower-right panel of Figure \ref{fig:OR-RAF_GWAS}.

It is tempting to believe, on careless inspection, that roughly \emph{all} discovered associations fall inside the high power region of the diagram, therefore demonstrating the phase transition in statistical power.
Unfortunately, the estimates here are subject to survival {bias} --- the study in fact uses the {same} dataset for \emph{both} support estimation and parameter estimation, without adjusting the latter for the selection process.
The seemingly striking agreement between the power calculations and the estimated effects of reported associations \emph{should not} be taken as evidence for the validity of our theory.
Nevertheless, we conjecture, as the theory predicts, that accurate and unbiased parameter estimates from an independent replication will still place the associations in the high power region of the diagram. 
\end{remark}

Finally, we demonstrate with an example how results in Sections \ref{sec:chisq-boundaries} and \ref{sec:signal-size-odds-ratio} may be used for planning prospective association studies.

\begin{example}
In a GWAS with $p = 10^6$ genomic marker locations, researchers wish to locate genetic associations with the trait of interest.
Specifically, they wish to maximize power in the region where genetic variants have risk allele frequencies of $0.01$ and odds ratios of $1.2$.
By Corollary \ref{cor:optimal-design}, the optimal design has a fraction of cases $\phi^* = 0.478$, yielding the statistical signal size per sample $w^2\approx9.00\times10^{-5}$ according to Corollary \ref{cor:signal-size-odds-ratio-conditional-frequency}.

If we wish to achieve exact-approximate support recovery in the sense of \eqref{eq:support-recovery-success}, Theorem \ref{thm:chi-squared-exact-approx-boundary} predicts that the signal size parameter $r$ has to be at least $\widetilde{g}(\beta)= 1$.
This signal size calls for a sample size of $n = \lambda / w^2 = 2r\log(p)/w^2 \approx 307,011$.
In a typically GWAS, a pair of alleles are sequenced for every marker location, bringing the required number of subjects in the study to $n/2 \approx 153,509$.
\end{example}

In comparison, a more accurate power calculation directly using \eqref{eq:power-approximation} predicts that $n / 2 = 165,035$ subjects are needed, under the set of parameters ($p=10^6$, $f=0.01$, $R=1.2$) and $\mathrm{FWER}=0.05$, $\mathrm{FNR}=0.5$; this is $7\%$ higher than our crude asymptotic approximation.
In general, we recommend using the more precise calculations over the back-of-the-envelope asymptotics for planning prospective studies and performing systematic reviews;
a user-friendly web application implementing the more precise approximations is provided in \cite{gao2019upass}.
Nevertheless, the phase transition results generate simple but powerful insights that cannot be easily supplanted.

\section{Phase transitions in the Gaussian additive error model}
\label{sec:additive-error-model-boundaries}

As alluded to in the introduction, we draw explicit comparisons between the one-sided and two-sided alternatives in Gaussian additive error models \eqref{eq:model-additive}.

\begin{remark} \label{rmk:strong-classification-boundary-1}
The exact support recovery problem under one-sided alternatives in the dependent Gaussian additive error model \eqref{eq:model-additive} was studied in \cite{gao2020fundamental}. 
The parametrization of sparsity was identical to \eqref{eq:signal-sparsity}, while the range of the non-zero (and perhaps unequal) mean shifts $\mu(i)$ was parametrized as
\begin{equation} \label{eq:signal-size-additive}
    \sqrt{2\underline{r}\log{p}}
    \le \mu(i) \le
    \sqrt{2\overline{r}\log{p}}, \quad \text{for all}\;\;i\in S_p,
\end{equation}
with constants $\underline{r}$ and $\overline{r}$, where $0<\underline{r}\le\overline{r}\le+\infty$.
Under this one-sided alternative, a phase transition in the $r$-$\beta$ plane was described, and the boundary was found to be identical to \eqref{eq:exact-boundary-chisquared} in Theorem \ref{thm:chi-squared-exact-boundary}. The latter, as discussed in Section \ref{subsec:motivation-additive}, covers two-sided alternatives in the additive model \eqref{eq:model-additive}.

In other words, comparing the two-sided alternative versus its one-sided counterpart, there is asymptotically no difference in terms of the signal sizes needed to achieve exact support recovery.
As we shall see in numerical experiments (in Section \ref{sec:numerical} below), the difference is not very pronounced even in moderate dimensions, and vanishes as $p\to\infty$, in accordance with Theorem \ref{thm:chi-squared-exact-boundary}.
\end{remark}

A similar comparison can be drawn in the approximate support recovery problem between the two types of alternatives.

\begin{remark} \label{rmk:weak-classification-boundary}
The approximate support recovery problem in the Gaussian additive error model \eqref{eq:model-additive} under one-sided alternatives was studied in \cite{arias2017distribution}, 
where the phase transition phenomenon was characterized by a boundary that coincides with \eqref{eq:approx-boundary-chisquared} in Theorem \ref{thm:chi-squared-approx-boundary}.
Similar to the exact support recovery problem, this indicates vanishing difference in the difficulties of the two alternatives.
\end{remark}

Finally, we derive two new asymptotic results under the \emph{asymmetric} statistical risks, \eqref{eq:risk-exact-approx} and \eqref{eq:risk-approx-exact}, under one-sided alternatives.
First, a counterpart of Theorem \ref{thm:chi-squared-exact-approx-boundary} describes the phase transition in the exact-approximate support recovery problem.

\begin{theorem} \label{thm:additive-error-exact-approx-boundary}
Consider the high-dimensional additive error model \eqref{eq:model-additive} under independent standard Gaussian errors, with signal sparsity and size as described in \eqref{eq:signal-sparsity} and \eqref{eq:signal-size-additive}.
The function $\widetilde{g}(\beta)$ in \eqref{eq:exact-approx-boundary-chisquared} characterizes the phase transition of exact-approximate support recovery problem.

Specifically, if $\underline{r} > \widetilde{g}(\beta)$, then the procedures listed in Theorem \ref{thm:chi-squared-exact-boundary} with slowly vanishing nominal FWER levels achieve asymptotically exact-approximate support recovery in the sense of \eqref{eq:support-recovery-success}. 
Conversely, if $\overline{r} < \widetilde{g}(\beta)$, then for any thresholding procedure $\widehat{S}$, the exact-approximate support recovery fails in the sense of \eqref{eq:support-recovery-faliure}.
\end{theorem}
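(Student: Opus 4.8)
The plan is to mirror the proof of Theorem \ref{thm:chi-squared-exact-approx-boundary} for the chi-square model, replacing the chi-square tail and extreme-value estimates by their one-sided Gaussian analogues. As in that proof I would treat the two halves separately: invoking the definition of asymptotic success and failure, it suffices to exhibit a procedure driving $\mathrm{FWER}+\mathrm{FNR}\to0$ when $\underline r>1$, and to show $\liminf(\mathrm{FWER}+\mathrm{FNR})\ge1$ for every thresholding procedure when $\overline r<1$. Throughout, the key scale is that the threshold separating the two regimes sits at $\sqrt{2\log p}$, exactly as in the chi-square case, which is what forces the critical level $\widetilde g(\beta)=1$.

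For achievability ($\underline r>1$), I would analyze Bonferroni at a slowly vanishing level $\alpha_p$, whose deterministic threshold is $t_p=\bar\Phi^{-1}(\alpha_p/p)$. Since $\alpha_p p^{\delta}\to\infty$ forces $\log(1/\alpha_p)=o(\log p)$, one gets $t_p=\sqrt{2\log p}\,(1+o(1))$. The union bound over the at most $p$ nulls gives $\mathrm{FWER}\le\alpha_p\to0$. For the false non-discovery rate, each signal $i\in S$ is missed with probability $\Phi(t_p-\mu(i))$; since $\mu(i)\ge\sqrt{2\underline r\log p}$ with $\underline r>1$, the argument satisfies $t_p-\mu(i)\le(1-\sqrt{\underline r})\sqrt{2\log p}\,(1+o(1))\to-\infty$, so $\max_{i\in S}\Phi(t_p-\mu(i))\to0$ and hence $\mathrm{FNR}=|S|^{-1}\sum_{i\in S}\Phi(t_p-\mu(i))\to0$. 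The remaining procedures (Sid\'ak, Holm, Hochberg) control FWER at level $\alpha_p$ while rejecting at least as many hypotheses as Bonferroni pointwise, so their miss sets are contained in Bonferroni's and they inherit both $\mathrm{FWER}\to0$ and $\mathrm{FNR}\to0$ by comparison.

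For impossibility ($\overline r<1$), fix any thresholding procedure with data-dependent threshold $t=t(x)$, choose a constant $c$ with $\sqrt{\overline r}<c<1$, and set $\tau_p=c\sqrt{2\log p}$. Writing $M_0=\max_{i\notin S}x(i)$ and using $|S|=o(p)$, Gaussian extreme-value theory yields $\P[M_0<\tau_p]\to0$. Decomposing on $\{t(x)\le\tau_p\}$ and its complement: on $\{t(x)\le\tau_p\}\cap\{M_0\ge\tau_p\}$ some null exceeds the threshold, so $\mathrm{FWER}=\P[\widehat S\not\subseteq S]\ge\P[t(x)\le\tau_p]-\P[M_0<\tau_p]$; and whenever $t(x)>\tau_p$ and $x(i)<\tau_p$ the signal $i$ is missed, so, since $\mu(i)\le\sqrt{\overline r}\,\sqrt{2\log p}<\tau_p$ gives $\P[x(i)<\tau_p]=\Phi(\tau_p-\mu(i))\ge\Phi((c-\sqrt{\overline r})\sqrt{2\log p})\to1$ uniformly in $i\in S$, one obtains $\mathrm{FNR}\ge1-o(1)-\P[t(x)\le\tau_p]$. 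Adding the two bounds, the $\P[t(x)\le\tau_p]$ terms cancel and $\liminf(\mathrm{FWER}+\mathrm{FNR})\ge1$, which is failure in the sense of \eqref{eq:support-recovery-faliure}. Note this argument uses only the upper bound $\mu(i)\le\sqrt{2\overline r\log p}$, so failure holds for \emph{every} admissible signal configuration without having to select a worst case.

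The main obstacle is the cancellation device that turns the false-discovery/missed-detection trade-off into a clean lower bound that is uniform over \emph{all} data-dependent thresholds $t(x)$; everything else reduces to two matching Gaussian estimates, namely the concentration of the null maximum near $\sqrt{2\log p}$ and the uniform signal-detection probability below $\tau_p$. These are standard and already appear, with chi-square tails, in Theorem \ref{thm:chi-squared-exact-approx-boundary}; the only genuinely new bookkeeping is to check that the one-sided Gaussian tail $\bar\Phi(t)\asymp\phi(t)/t$ reproduces the same threshold scale and the same critical level $\widetilde g(\beta)=1$, thereby confirming the asymptotic equivalence of the one-sided and two-sided problems.
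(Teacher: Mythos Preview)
Your proposal is correct and follows essentially the same route as the paper: for sufficiency, Bonferroni's threshold satisfies $t_p\sim\sqrt{2\log p}$ so FWER vanishes by construction and each signal's miss probability $\Phi(t_p-\mu(i))\to0$; for necessity, compare the data-dependent threshold to a fixed level $\tau_p=\sqrt{2q\log p}$ with $\overline r<q<1$, trapping the procedure between a null maximum that concentrates near $\sqrt{2\log p}$ and signals that fall below $\tau_p$ with probability tending to one.

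Two minor packaging differences are worth noting. Your sufficient-condition argument is slightly more direct than the paper's: since Bonferroni's threshold is deterministic, you compute $\mathrm{FNR}=|S|^{-1}\sum_{i\in S}\Phi(t_p-\mu(i))$ exactly, whereas the paper routes through the empirical survival function $\widehat W_{\mathrm{signal}}$ and Eicker's lemma before reaching the same conclusion. For the necessary condition, the paper packages the trade-off as a pointwise minimum, $\mathrm{FWER}+\mathrm{FNR}\ge\P[\max_{i\in S^c}x(i)>u]\wedge\E[|S\setminus\widehat S(u)|/|S|]$, and shows both arms tend to $1$; your additive cancellation (bounding $\mathrm{FWER}\ge\P[t\le\tau_p]-o(1)$ and $\mathrm{FNR}\ge1-o(1)-\P[t\le\tau_p]$, then summing) is an equivalent device that handles the unknown location of $t(x)$ just as cleanly.
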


A counterpart of Theorem \ref{thm:chi-squared-approx-exact-boundary} also holds under one-sided alternatives.

\begin{theorem} \label{thm:additive-error-approx-exact-boundary}
In the context of Theorem \ref{thm:additive-error-exact-approx-boundary}, the function $\widetilde{h}(\beta)$ in \eqref{eq:approx-exact-boundary-chisquared}
characterizes the phase transition of approximate-exact support recovery problem.

Specifically, if $\underline{r} > \widetilde{h}(\beta)$, then the Benjamini-Hochberg procedure with slowly vanishing nominal FDR levels achieves asymptotically approximate-exact support recovery in the sense of \eqref{eq:support-recovery-success}. 
Conversely, if $\overline{r} < \widetilde{h}(\beta)$, then for any thresholding procedure $\widehat{S}$, the approximate-exact support recovery fails in the sense of \eqref{eq:support-recovery-faliure}.
\end{theorem}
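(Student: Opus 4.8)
The plan is to run the argument in close parallel with the proof of Theorem~\ref{thm:chi-squared-approx-exact-boundary}, exploiting the fact that for one-sided Gaussian observations the relevant tail at threshold $\sqrt{2s\log p}$ obeys $\overline{\Phi}(\sqrt{2s\log p}) = p^{-s+o(1)}$, which matches the central chi-square tail $\P[\chi^2_\nu > 2s\log p] = p^{-s+o(1)}$ used there; likewise, under the identification $\lambda = \mu^2$ a signal of size $\sqrt{2\underline{r}\log p}$ is detected above such a threshold with the same exponential rate $p^{-(\sqrt{\underline{r}}-\sqrt{s})^2}$ as its chi-square analogue. I would therefore first record these one-sided tail estimates and then transfer the converse and achievability computations essentially verbatim, which is precisely why the boundary $\widetilde{h}(\beta)$ is unchanged.

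For the converse (when $\overline{r} < \widetilde{h}(\beta)$), it suffices to exhibit failure for the configuration most favorable to the statistician, namely all signals at the maximal size $\mu(i) = \sqrt{2\overline{r}\log p}$, since larger signals only make recovery easier. A thresholding procedure takes $\widehat{S} = \{i : x(i)\ge t\}$ for some data-dependent $t$, and on the event $\{S\subseteq\widehat{S}\}$ that is needed to drive the FWNR to zero we must have $t\le\min_{i\in S}x(i)$; because $\min_{i\in S}\epsilon(i)$ concentrates at $-\sqrt{2(1-\beta)\log p}$, this forces $t \le \sqrt{2\log p}\,(\sqrt{\overline{r}}-\sqrt{1-\beta}) + o(\sqrt{\log p})$. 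At any such threshold the number of null exceedances concentrates around $p^{\,1-(\sqrt{\overline{r}}-\sqrt{1-\beta})^2}$, and $\overline{r} < \widetilde{h}(\beta)$ is exactly the statement $(\sqrt{\overline{r}}-\sqrt{1-\beta})^2 < \beta$, so this count dominates the signal count $|S_p| = \lfloor p^{1-\beta}\rfloor$. Hence on $\{S\subseteq\widehat{S}\}$ the false discovery proportion tends to $1$, and combining with $\mathrm{FWNR} = \P[S\not\subseteq\widehat{S}]$ through $\mathrm{FDR}+\mathrm{FWNR} \ge \P[S\subseteq\widehat{S}](1-o(1)) + \P[S\not\subseteq\widehat{S}]$ gives $\liminf(\mathrm{FDR}+\mathrm{FWNR})\ge 1$. (When $\overline{r}\le 1-\beta$ the signals sit below the noise level and the conclusion follows more directly.)

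For achievability (when $\underline{r} > \widetilde{h}(\beta)$), the false discovery rate comes for free: under the independent Gaussian errors assumed here the Benjamini--Hochberg procedure at nominal level $\alpha_p$ satisfies $\mathrm{FDR}\le\alpha_p\to 0$, so it remains to show $\mathrm{FWNR}\to 0$. Since $\underline{r}>\widetilde{h}(\beta)$ is equivalent to $\beta < (\sqrt{\underline{r}}-\sqrt{1-\beta})^2$, I can fix $s^*$ with $\beta < s^* < (\sqrt{\underline{r}}-\sqrt{1-\beta})^2$ and set $t^* = \sqrt{2s^*\log p}$. The upper constraint is exactly what makes the union bound over signals $p^{1-\beta}\cdot p^{-(\sqrt{\underline{r}}-\sqrt{s^*})^2} = p^{\,1-\beta-(\sqrt{\underline{r}}-\sqrt{s^*})^2}\to 0$ succeed, so every signal exceeds $t^*$ with probability $1-o(1)$; it then suffices to show $t_{\mathrm{BH}}\le t^*$, which reduces to clearing the self-consistency bar at $t^*$. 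There the number of statistics exceeding $t^*$ is at least the signal count $\approx p^{1-\beta}$, whereas the bar is $p\,\overline{\Phi}(t^*)/\alpha_p \approx p^{1-s^*}/\alpha_p$; the lower constraint $s^*>\beta$ together with the slowly vanishing condition $\alpha_p p^\delta\to\infty$ (hence $\alpha_p^{-1} = p^{o(1)}$) gives $p^{1-s^*}/\alpha_p = p^{\,1-s^*+o(1)} = o(p^{1-\beta})$, so the bar is cleared and $t_{\mathrm{BH}}\le t^*$ with high probability.

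The main obstacle is the achievability step. In the converse the data-dependent threshold is disposed of by conditioning on $\{S\subseteq\widehat{S}\}$, but here one must control the random Benjamini--Hochberg threshold and place it below $t^*$ with probability high enough that the \emph{family-wise} quantity $\mathrm{FWNR}$ — not an average — vanishes. Making the exceedance-count concentration uniform enough to beat the union bound over all $p^{1-\beta}$ signals, while carrying the $p^{o(1)}$ slack from the slowly vanishing level $\alpha_p$ and keeping every Gaussian tail asymptotic controlled to within $p^{o(1)}$ factors, is the delicate part of the argument.
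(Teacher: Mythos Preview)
Your proposal is correct and tracks the paper's proof closely. The achievability step is identical in substance: the paper also fixes $t^*=\sqrt{2q\log p}$ with $\sqrt{\beta}<\sqrt{q}<\sqrt{\underline r}-\sqrt{1-\beta}$, shows $\P[\tau\le t^*]\to1$ by the same self-consistency/empirical-process argument you sketch (reusing the chi-square computation with $\overline\Phi(t^*)\sim p^{-q}L(p)$ in place of the chi-square tail), and then gets $\P[\min_{i\in S}x(i)\ge t^*]\to1$ via relative stability rather than a union bound---either works, and the ``obstacle'' you flag dissolves once one notes that $\{\tau\le t^*\}$ and $\{\min_{i\in S}x(i)\ge t^*\}$ are each shown with probability $1-o(1)$ separately and then intersected, so no uniformity over signals is needed beyond a single union bound. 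For the converse the paper uses a slightly different but equivalent device: instead of conditioning on $\{S\subseteq\widehat S\}$, it fixes $u=t^*=\sqrt{2q\log p}$ with $\sqrt{\overline r}-\sqrt{1-\beta}<\sqrt{q}<\sqrt{\beta}$ and invokes the deterministic lower bound $\mathrm{FDR}+\mathrm{FWNR}\ge \E\bigl[\tfrac{|\widehat S(u)\setminus S|}{|\widehat S(u)\setminus S|+|S|}\bigr]\wedge\P[\min_{i\in S}x(i)<u]$, showing both terms tend to $1$; your conditioning argument lands on the same null-exceedance count at the same $u$, so the two routes are interchangeable. One small point: your reduction to the ``most favorable'' configuration is really the stochastic-domination bound $\min_{i\in S}x(i)\le\sqrt{2\overline r\log p}+\min_{i\in S}\epsilon(i)$ used inside the inequality, not a blanket monotonicity of the risk---the paper makes this explicit rather than arguing at the level of configurations.
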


Theorems \ref{thm:additive-error-exact-approx-boundary} and \ref{thm:additive-error-approx-exact-boundary} are proved in Appendix \ref{subsec:proof-additive-error-mix-boundaries}. 

\begin{remark}
Comparing Theorems \ref{thm:chi-squared-exact-approx-boundary} to \ref{thm:additive-error-exact-approx-boundary} and Theorems \ref{thm:chi-squared-approx-exact-boundary} to \ref{thm:additive-error-approx-exact-boundary}, we see that the phase transition boundaries under the two types of alternatives are identical in the exact-approximate and approximate-exact support recovery problems.
As pointed out in Remarks \ref{rmk:strong-classification-boundary-1} and \ref{rmk:weak-classification-boundary}, the additional uncertainty in the two-sided alternatives do not call for larger signal sizes asymptotically.

To complete the comparisons, we point out that the phase transition boundaries for the sparse signal \emph{detection} problem in the two types of alternatives are both identical to \eqref{eq:detection-boundary-chisquare}. This was analyzed in \cite{donoho2004higher}.
\end{remark}

\section{Numerical illustrations}
\label{sec:numerical}

We illustrate with simulations the phase transition phenomena in the chi-square model, and compare numerically the required signal sizes in support recovery problems between the two types of alternatives in the additive error model.

\subsection{The exact support recovery problem}

The sparsity of the signal vectors in the experiments are parametrized as in \eqref{eq:signal-sparsity}. 
Signal sizes are assumed equal with magnitude $\lambda(i)=2r\log{p}$ for $i\in S$.
We estimate the support set $S$ using Bonferroni's procedure with nominal FWER level set at $1/(5{\log{p}})$.
The nominal FWER levels vanishes slowly, in line with the assumptions in Theorem \ref{thm:chi-squared-exact-boundary}.
Experiments were repeated 1000 times at each of the 400 sparsity-signal-size combinations, for dimensions $p=10^2, 10^3$, and $10^4$.

The empirical probabilities of exact support recovery under Bonferroni's procedure are shown in Figure \ref{fig:phase-simulated-chi-squared}.
The numerical results suggest not only good accuracy of the predicted boundaries in high-dimensions ($p=10^4$, right panels of Figure \ref{fig:phase-simulated-chi-squared}), but also practical relevance of the theoretical predictions in moderate dimensions ($p=100$, left panels of Figure \ref{fig:phase-simulated-chi-squared}).

\begin{figure}
      \centering
      \includegraphics[width=0.32\textwidth]{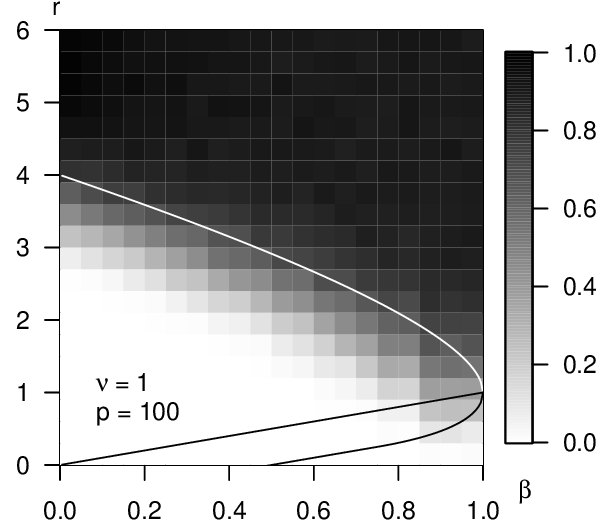}
      \includegraphics[width=0.32\textwidth]{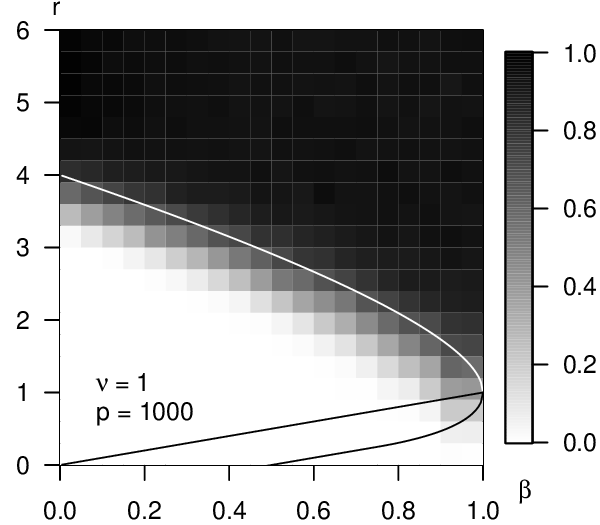}
      \includegraphics[width=0.32\textwidth]{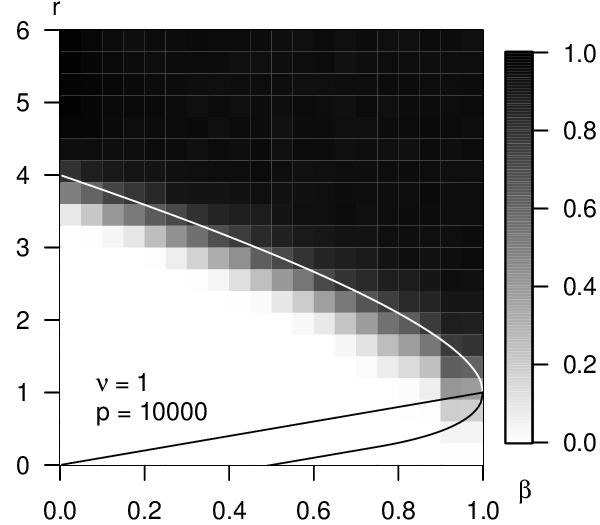}
      \includegraphics[width=0.32\textwidth]{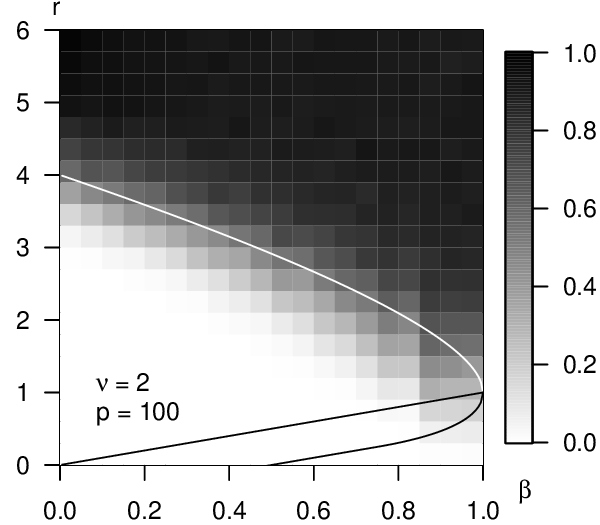}
      \includegraphics[width=0.32\textwidth]{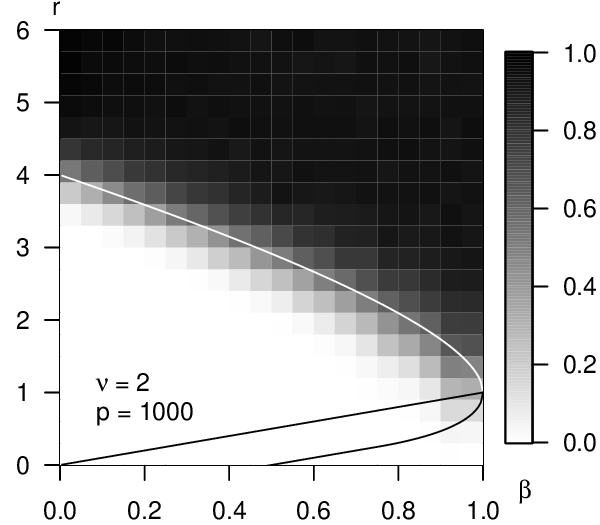}
      \includegraphics[width=0.32\textwidth]{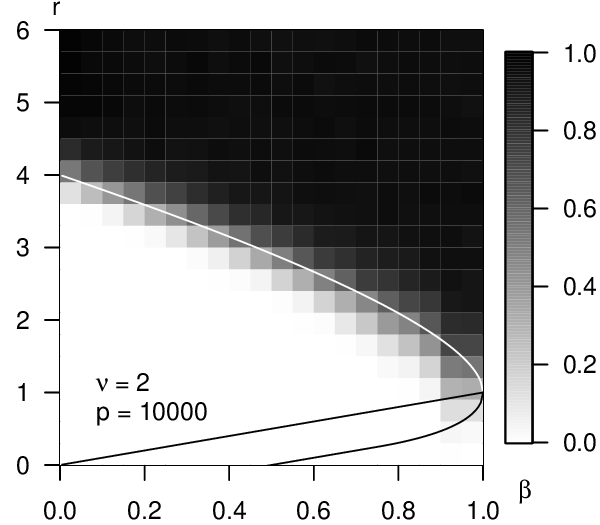}
      \includegraphics[width=0.32\textwidth]{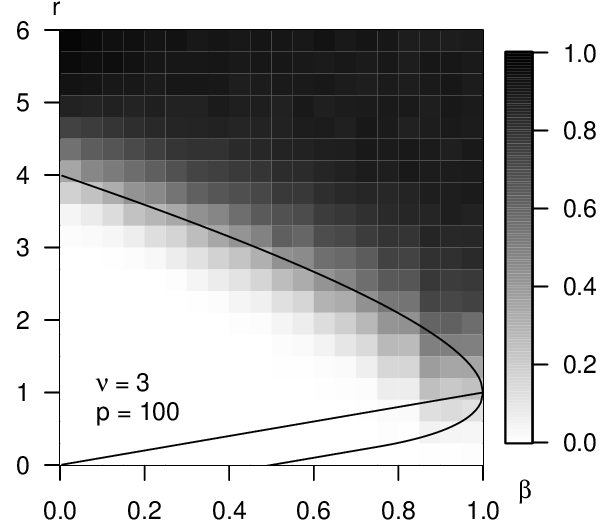}
      \includegraphics[width=0.32\textwidth]{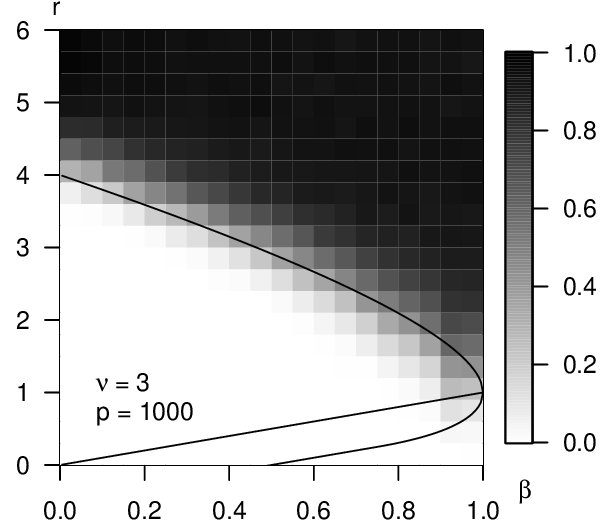}
      \includegraphics[width=0.32\textwidth]{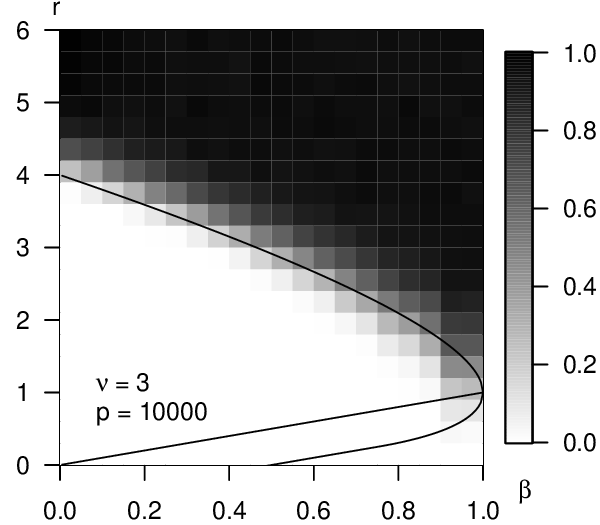}
      \includegraphics[width=0.32\textwidth]{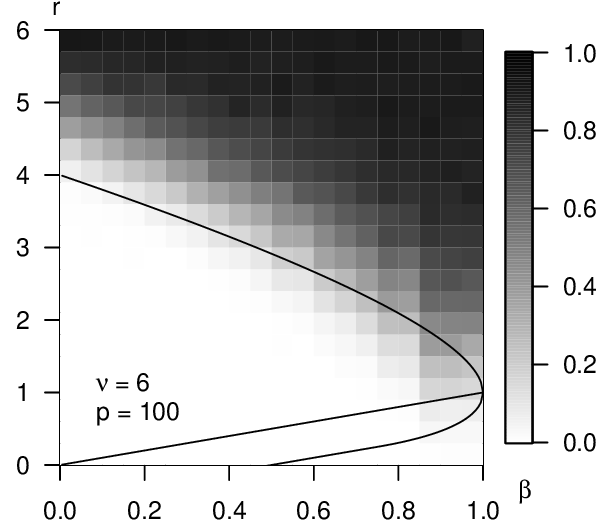}
      \includegraphics[width=0.32\textwidth]{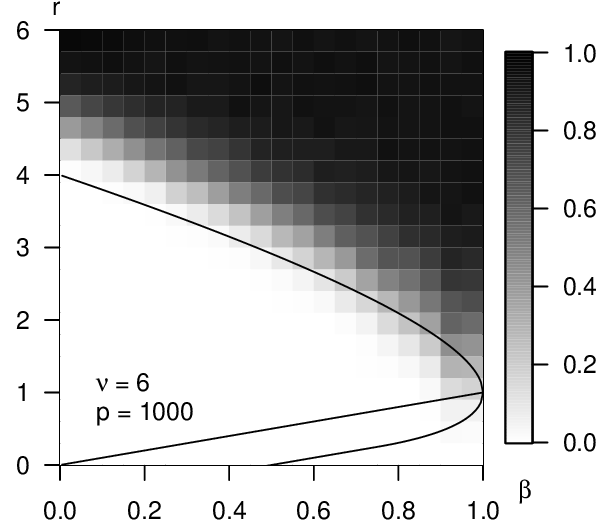}
      \includegraphics[width=0.32\textwidth]{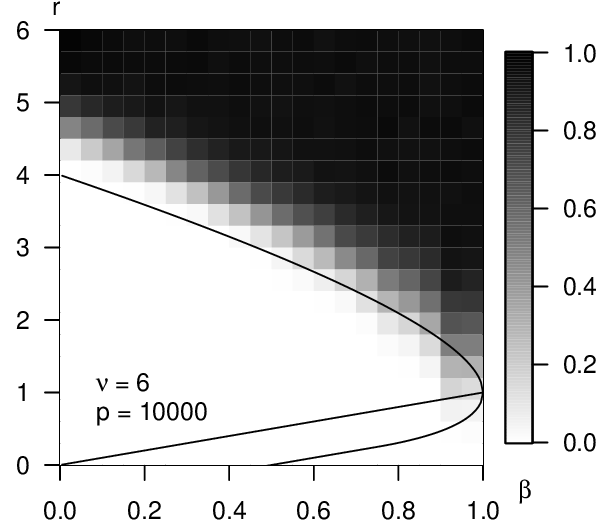}
      \caption{The empirical probability of exact support recovery of Bonferroni's procedure in the chi-squared model \eqref{eq:model-chisq}. 
      We simulate $\nu=1, 2, 3, 6$ (first to last row), at dimensions $p=10^2, 10^3, 10^4$ (left to right column), for a grid of sparsity levels $\beta$ and signal sizes $r$.
      The experiments were repeated 1000 times for each sparsity-signal size combination; darker color indicates higher probability of exact support recovery.  
      Numerical results are in general agreement with the boundaries described in Theorem \ref{thm:chi-squared-exact-boundary}; for large $\nu$'s, the phase transitions take place somewhat above the predicted boundaries.
      The boundary for the approximate support recovery (Theorem \ref{thm:chi-squared-approx-boundary}) and the detection boundary (see \citep{donoho2004higher}) are plotted for comparison.} 
      \label{fig:phase-simulated-chi-squared}
\end{figure}

We conduct further experiments to examine the optimality claims in Theorem \ref{thm:chi-squared-exact-boundary} by comparing with the oracle procedure with thresholds $t_p=\min_{i\in S}x(i)$.
We also examine the claims in Remark \ref{rmk:strong-classification-boundary-1}, and compare the one-sided alternatives in Gaussian additive models with the two-sided alternatives (or equivalently, the chi-square model with $\nu=1$).
We apply Bonferroni's procedure and the oracle thresholding procedure in both settings.

Experiments were repeated 1000 times for a grid of signal size values ranging from $r=0$ to $6$, and for dimensions $10^2, 10^3$, and $10^5$.
Results of the experiments, shown in Figure \ref{fig:one-vs-two-sided-exact_support_recovery}, suggest vanishing difference between difficulties of two-sided vs one-sided alternatives in the additive error models, as well as vanishing difference between the powers of Bonferroni's procedures and the oracle procedures as $p\to\infty$.

\begin{figure}
      \centering
      \includegraphics[width=0.32\textwidth]{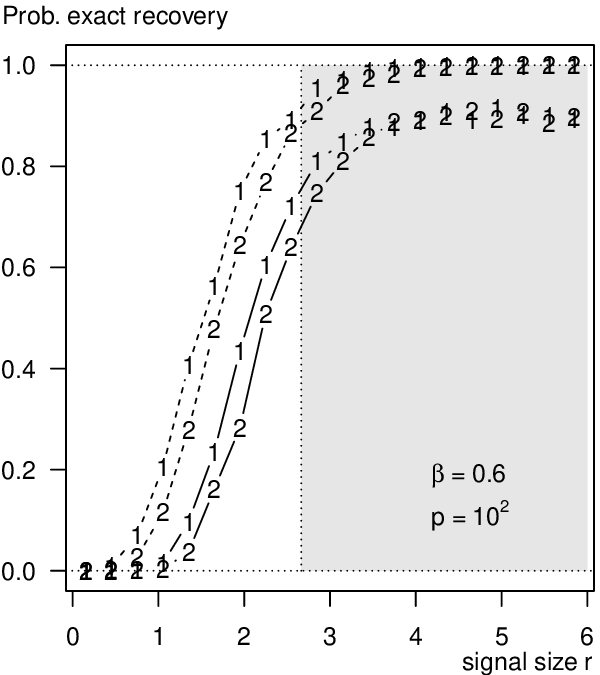}
      \includegraphics[width=0.32\textwidth]{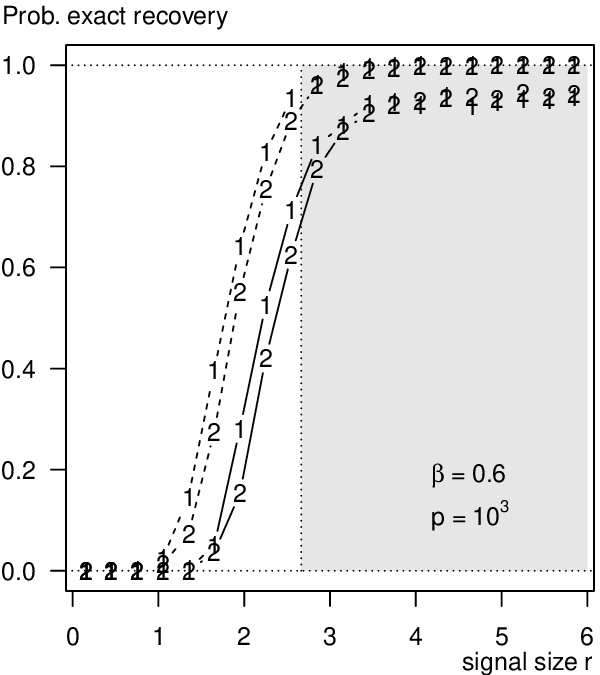}
      \includegraphics[width=0.32\textwidth]{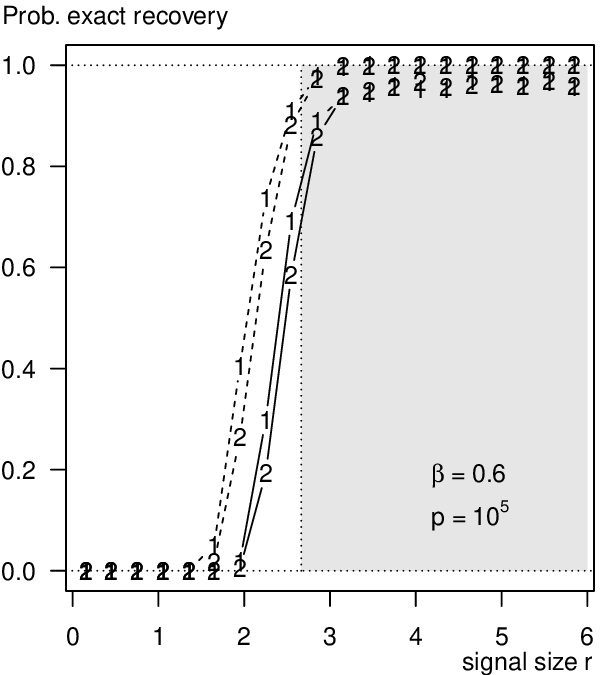}
      \caption{The empirical probability of exact support recovery of Bonferroni's procedure (solid curves) and the oracle procedure (dashed curves) in the chi-squared model with one degree of freedom (marked `2') in the additive Gaussian error model and under one-sided alternatives (marked `1'). 
      We simulate at dimensions $p=10^2, 10^3, 10^5$ (left to right) for a grid of signal sizes $r$ and sparsity level $\beta=0.6$.
      The experiments were repeated 1000 times for each method-model-signal-size combination. 
      Numerical results show evidence of convergence to the 0-1 law as predicted by Theorem \ref{thm:chi-squared-exact-boundary}; regions where asymptotically exact support recovery can be achieved are shaded in grey.
      The difference in power between Bonferroni's procedure and the oracle procedure, as well as in the two types of alternatives both decrease as dimensionality increases.} 
      \label{fig:one-vs-two-sided-exact_support_recovery}
\end{figure}

\subsection{The approximate, and approximate-exact support recovery problem}

Similar experiments are conducted to examine the optimality claims in Theorem \ref{thm:chi-squared-approx-boundary}, and in Remark \ref{rmk:weak-classification-boundary}.
We define an oracle thresholding procedure for approximate support recovery, where the threshold is chosen to minimize the empirical risk.
That is,
$$
t_p(x, S) \in \argmin_{t\in\R} \frac{|\widehat{S}(t)\setminus S|}{\max\{|\widehat{S}(t)|,1\}} + \frac{|S\setminus \widehat{S}(t)|}{\max\{|{S}|,1\}},
$$
where $\widehat{S}(t) = \{i\;|\;x(i)\ge t\}$;
in implementation, we only need to scan the values of observations $t\in\{x(1), \ldots, x(p)\}$. 
The nominal FDR level for the BH procedure is set at $1/(5{\log{p}})$, therefore slowly vanishing, in line with the assumptions in Theorem \ref{thm:chi-squared-approx-boundary}; all other parameters are identical to that in the experiments for exact support recovery.
Results of the experiments are shown in Figure \ref{fig:one-vs-two-sided-approx_support_recovery} and Figure \ref{fig:phase-simulated-chi-squared-approx-boundary}.

We also examine the boundary described in Theorem \ref{thm:chi-squared-exact-approx-boundary}.
Experimental settings are identical to that in the experiments for approximate support recovery.
We compare the performance of the BH procedure with an oracle procedure with threshold
$$
t_p(x, S) \in \min_{i\in S} x(i),
$$
and visualize results of the experiments in Figure \ref{fig:phase-simulated-chi-squared-approx-exact-boundary}.
Notice that the BH procedure sets its threshold somewhat higher than the oracle, especially for small $\beta$'s. 
The empirical risk of the oracle procedure (not shown here in the interest of space) follows much more closely the predicted boundary \eqref{eq:approx-exact-boundary-chisquared}.

\begin{figure}
      \centering
      \includegraphics[width=0.32\textwidth]{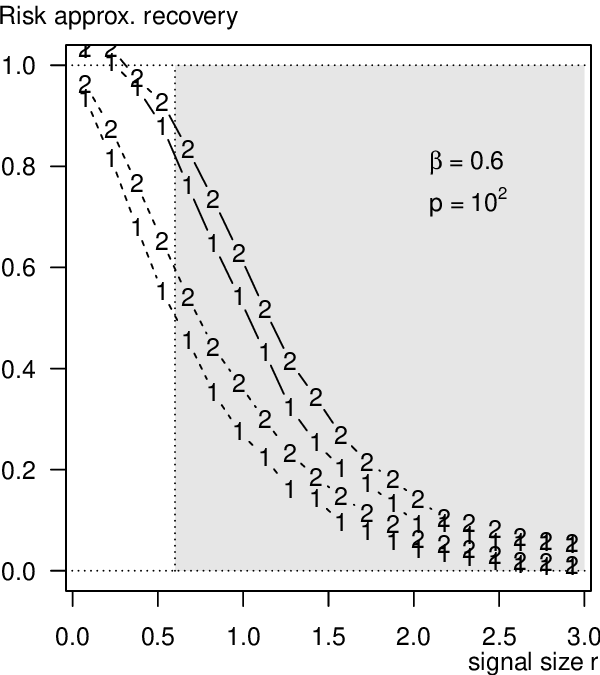}
      \includegraphics[width=0.32\textwidth]{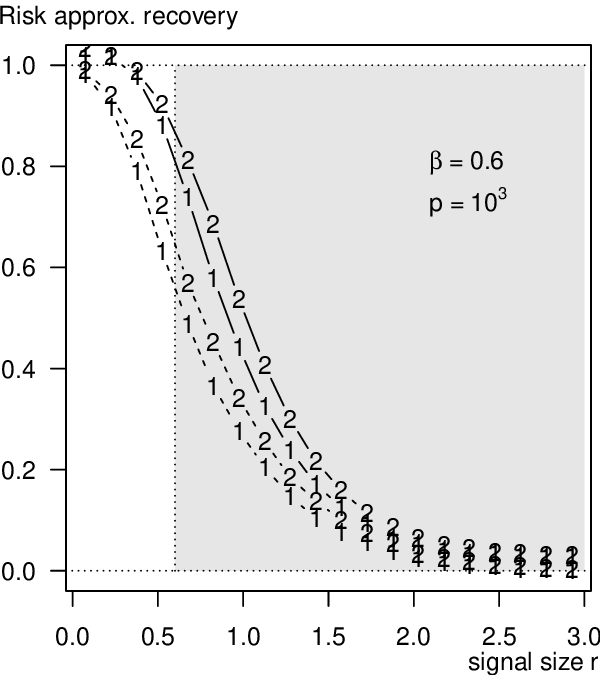}
      \includegraphics[width=0.32\textwidth]{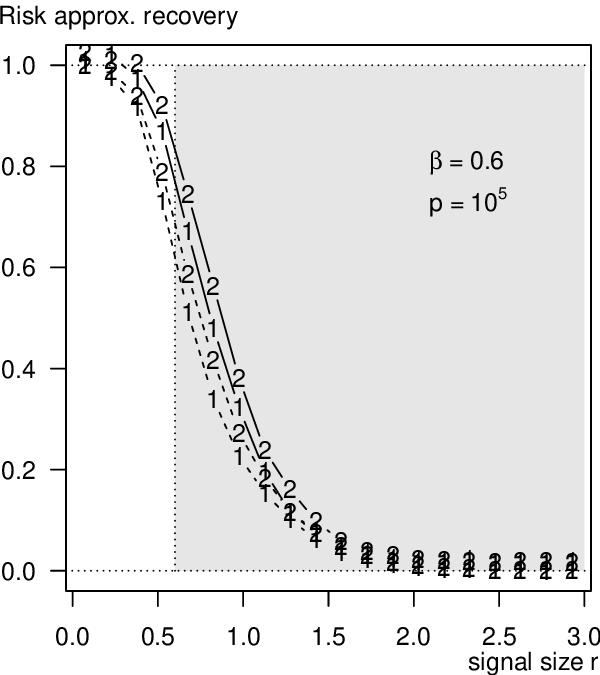}
      \caption{The empirical risk of approximate support recovery of Benjamini-Hochberg's procedure (solid curves) and the oracle procedure (dashed curves) in the chi-squared model with one degree of freedom (marked `2') and in the additive Gaussian error model under one-sided alternatives (marked `1'). 
      We simulate at dimensions $p=10^2, 10^3, 10^5$ (left to right) for a grid of signal sizes $r$ and sparsity level $\beta=0.6$.
      The experiments were repeated 1000 times for each method-model-signal-size combination. 
      Numerical results show evidence of convergence to the 0-1 law as predicted by Theorem \ref{thm:chi-squared-approx-boundary}; regions where asymptotically approximate support recovery can be achieved are shaded in grey.
      The difference in risks between Benjamini-Hochberg's procedure and the oracle procedure, as well as in the two types of alternatives, both decrease as dimensionality increases.} 
      \label{fig:one-vs-two-sided-approx_support_recovery}
\end{figure}

\begin{figure}
      \centering
      \includegraphics[width=0.32\textwidth]{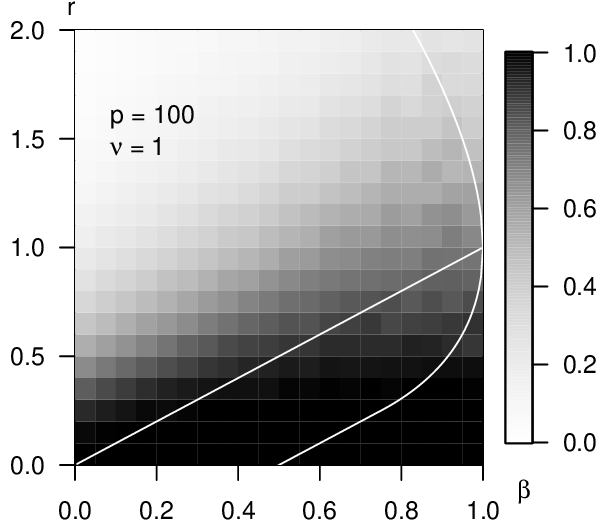}
      \includegraphics[width=0.32\textwidth]{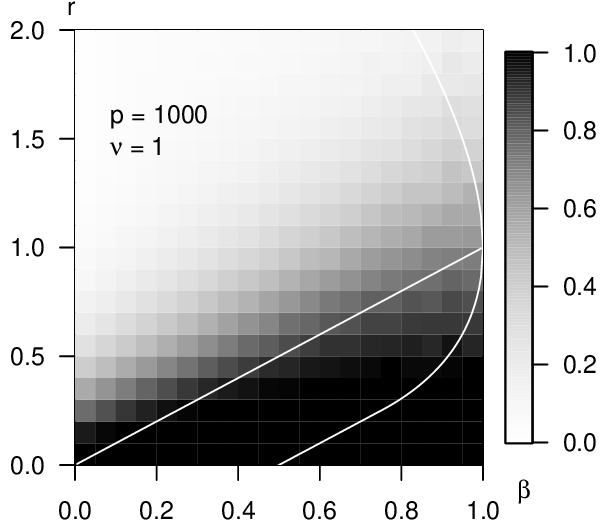}
      \includegraphics[width=0.32\textwidth]{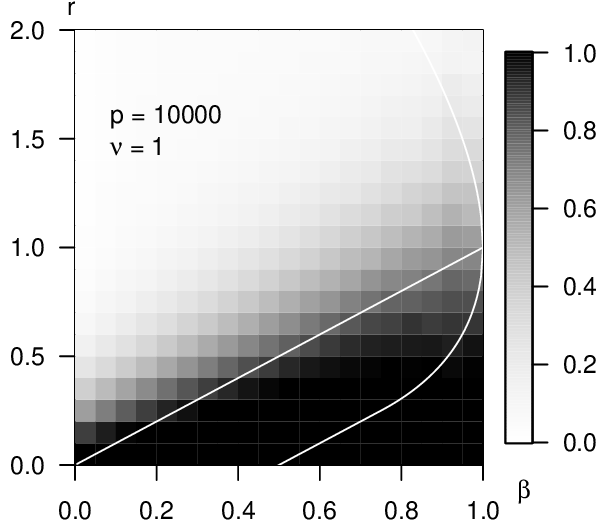}
      \includegraphics[width=0.32\textwidth]{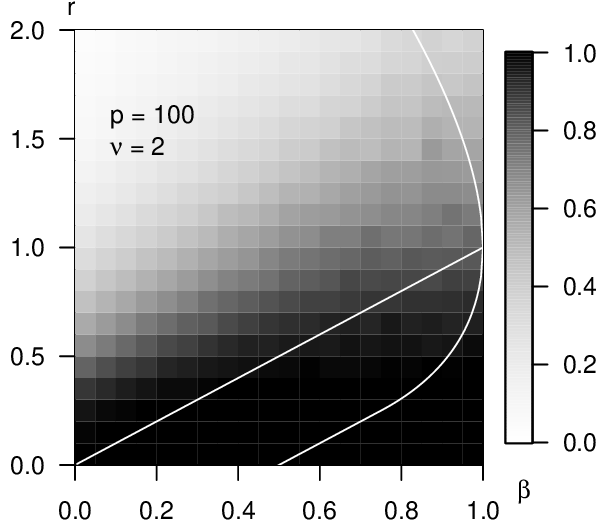}
      \includegraphics[width=0.32\textwidth]{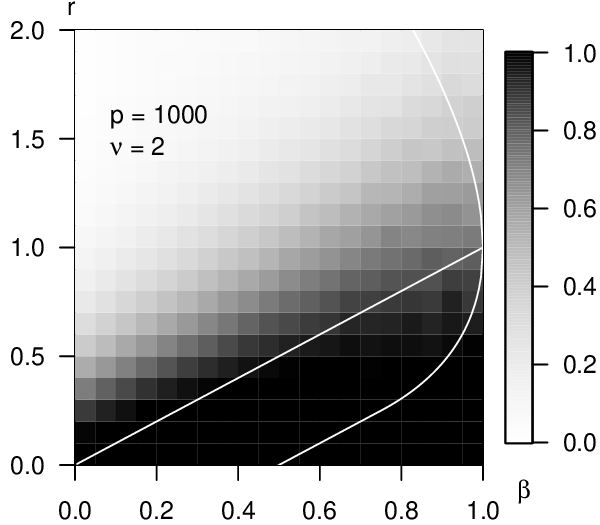}
      \includegraphics[width=0.32\textwidth]{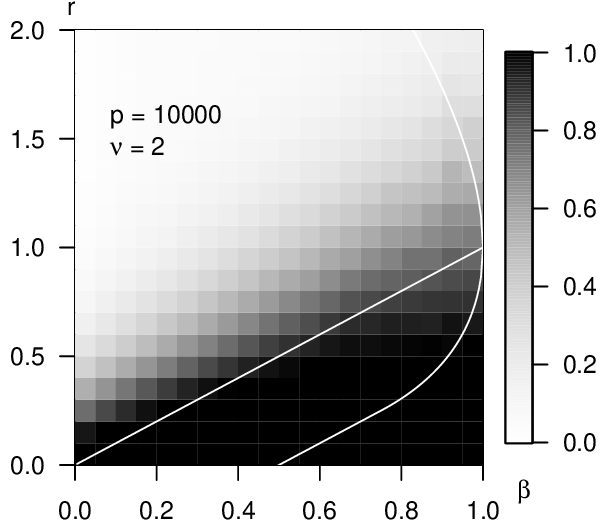}
      \includegraphics[width=0.32\textwidth]{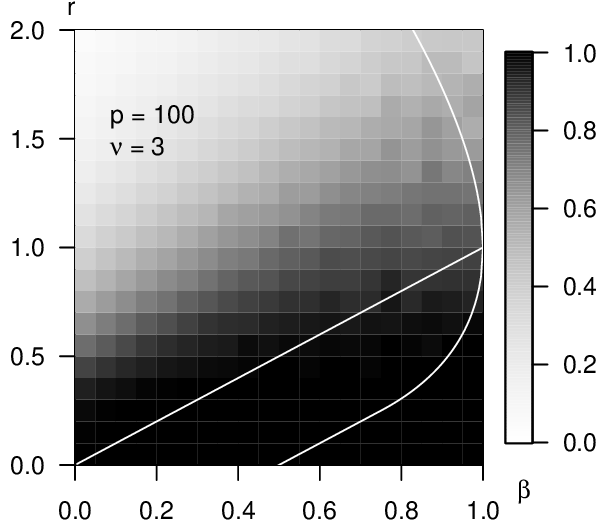}
      \includegraphics[width=0.32\textwidth]{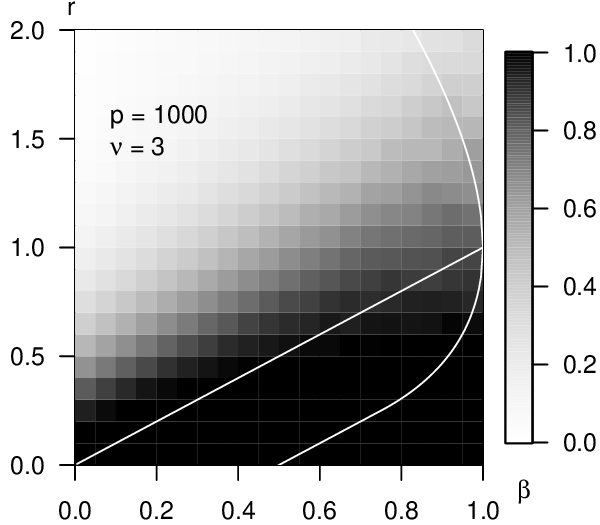}
      \includegraphics[width=0.32\textwidth]{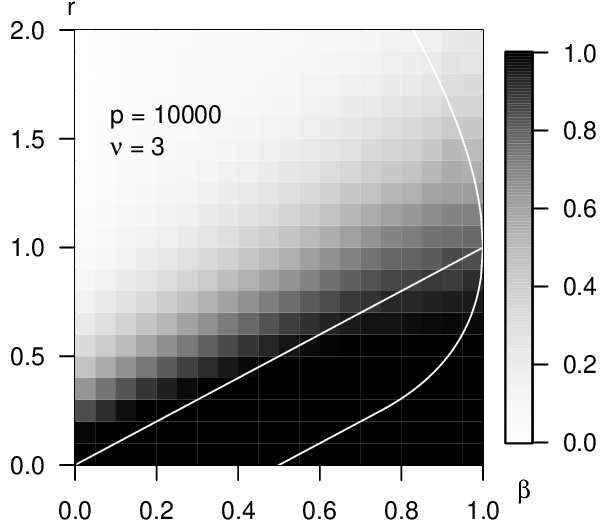}
      \includegraphics[width=0.32\textwidth]{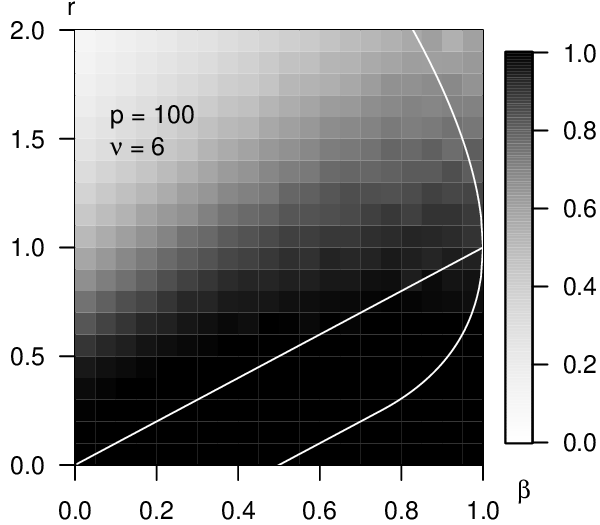}
      \includegraphics[width=0.32\textwidth]{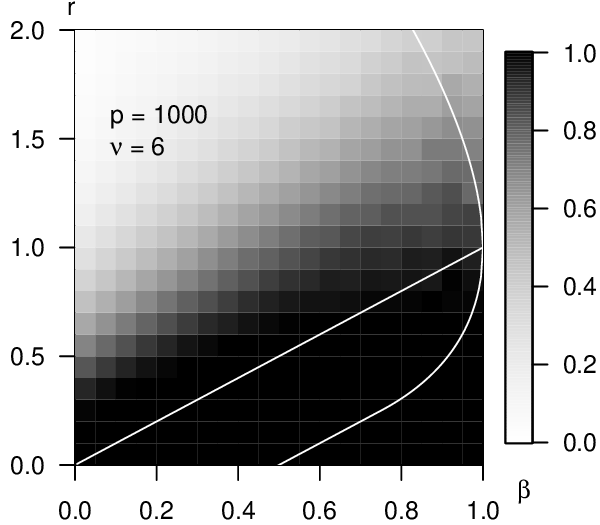}
      \includegraphics[width=0.32\textwidth]{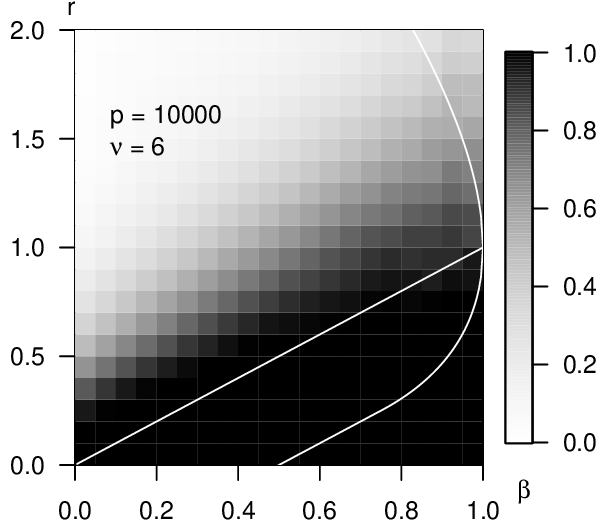}
      \caption{The estimated risk of approximate support recovery $\mathrm{risk}^{\mathrm{A}}$ (see \eqref{eq:risk-approximate}) of the Benjamini-Hochberg procedure in the chi-squared model \eqref{eq:model-chisq}. 
      We simulate $\nu=1, 2, 3, 6$ (first to last row), at dimensions $p=10^2, 10^3, 10^4$ (left to right column), for a grid of sparsity levels $\beta$ and signal sizes $r$.
      The experiments were repeated 1000 times for each sparsity-signal size combination; darker color indicates higher larger $\mathrm{risk}^{\mathrm{A}}$. 
      Numerical results are generally in agreement with the boundaries described in Theorem \ref{thm:chi-squared-approx-boundary}; for large $\nu$'s, the phase transitions take place somewhat above the predicted boundaries.
      The boundary for the exact support recovery problem (Theorem \ref{thm:chi-squared-exact-boundary}) and the detection boundary (see \citep{donoho2004higher}) are plotted for comparison.} 
      \label{fig:phase-simulated-chi-squared-approx-boundary}
\end{figure}

\begin{figure}
      \centering
      \includegraphics[width=0.32\textwidth]{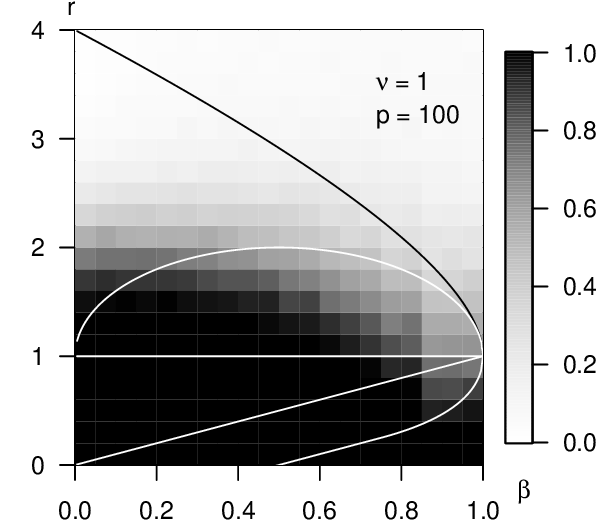}
      \includegraphics[width=0.32\textwidth]{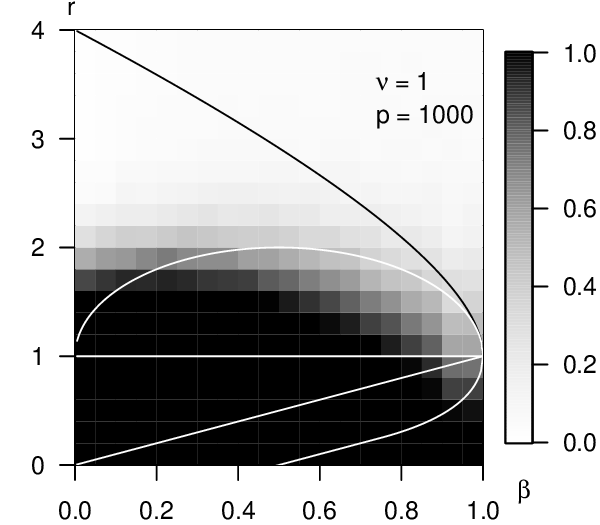}
      \includegraphics[width=0.32\textwidth]{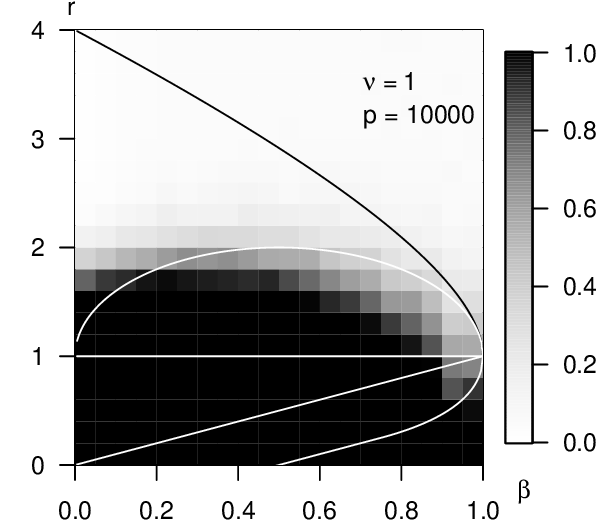}
      \includegraphics[width=0.32\textwidth]{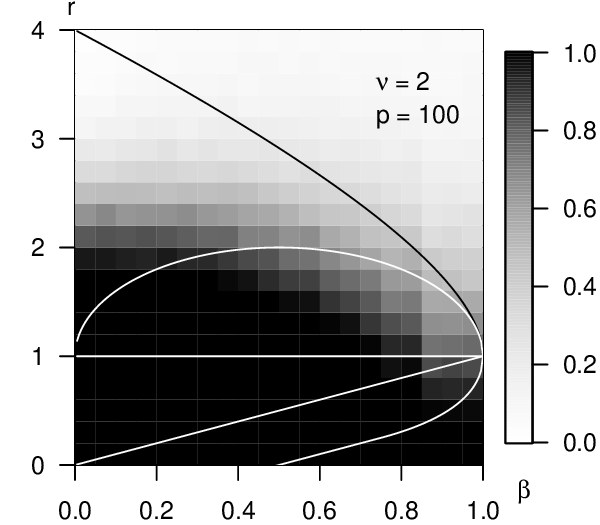}
      \includegraphics[width=0.32\textwidth]{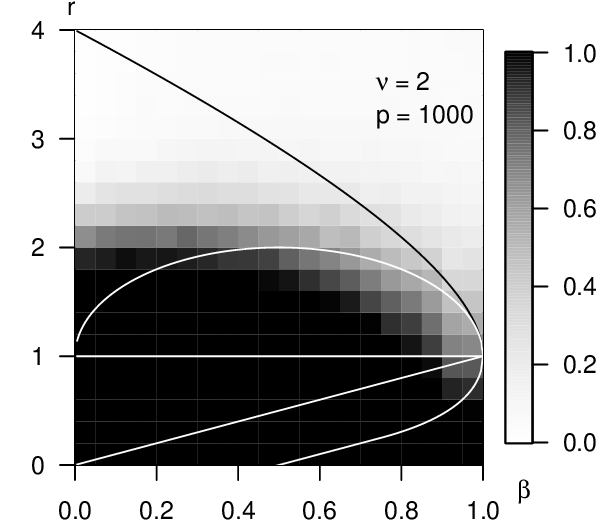}
      \includegraphics[width=0.32\textwidth]{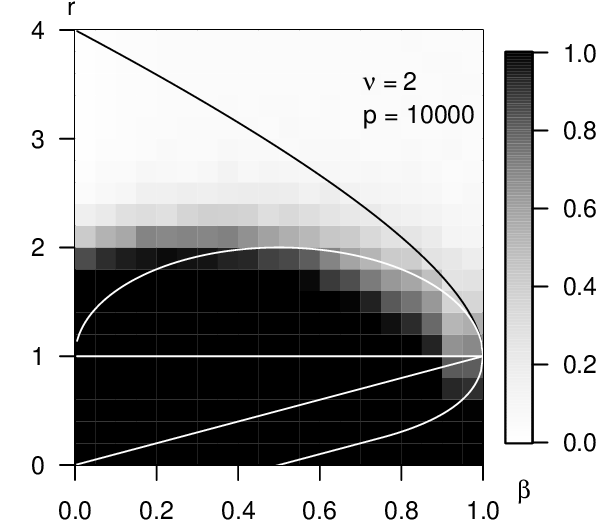}
      \includegraphics[width=0.32\textwidth]{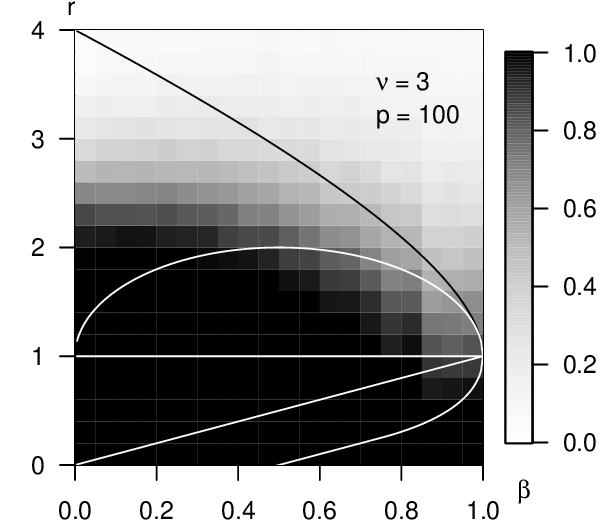}
      \includegraphics[width=0.32\textwidth]{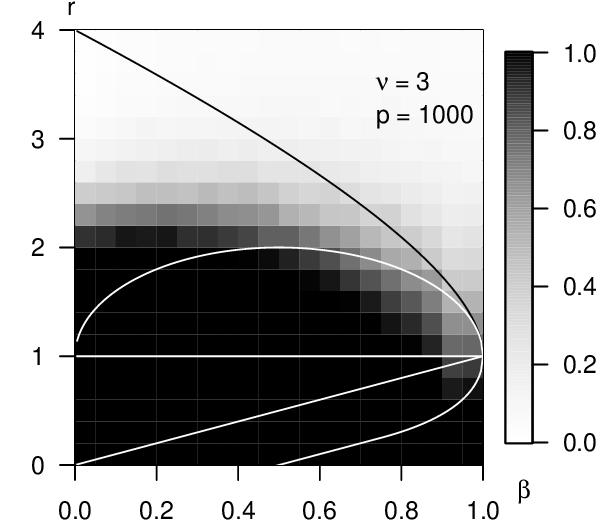}
      \includegraphics[width=0.32\textwidth]{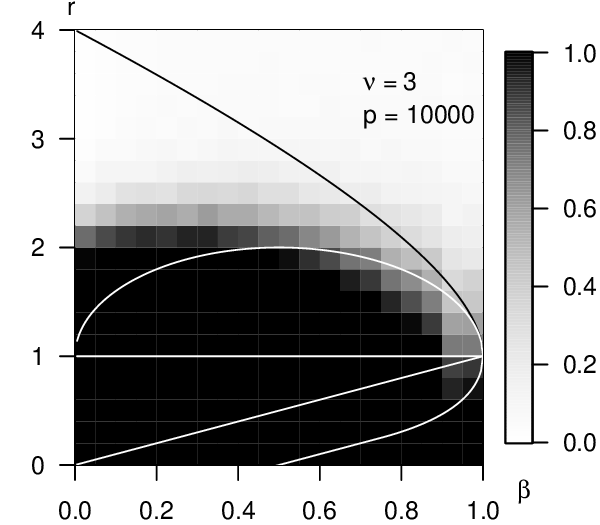}
      \includegraphics[width=0.32\textwidth]{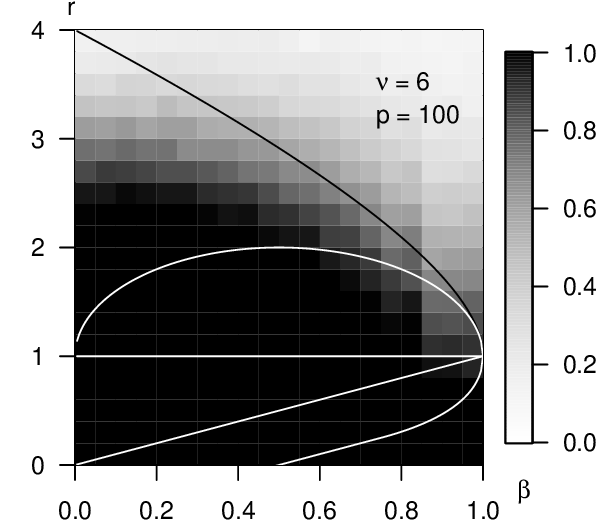}
      \includegraphics[width=0.32\textwidth]{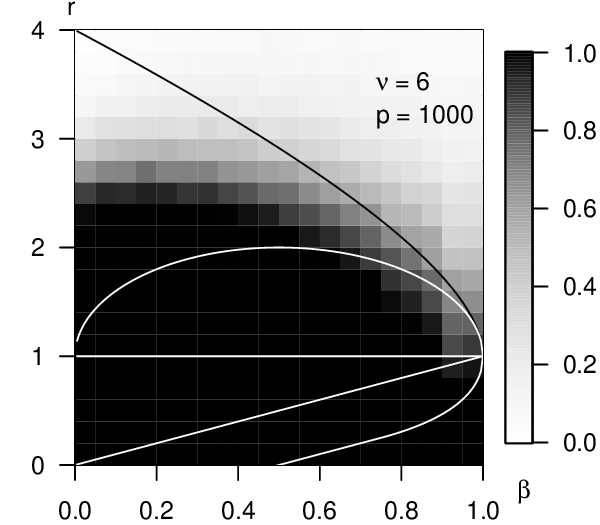}
      \includegraphics[width=0.32\textwidth]{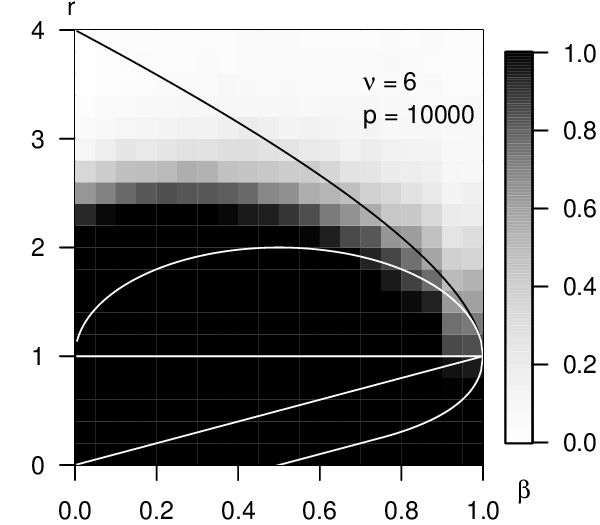}
      \caption{The estimated risk of approximate-exact support recovery $\mathrm{risk}^{\mathrm{EA}}$ (see \eqref{eq:risk-approx-exact}) of the Benjamini-Hochberg procedure in the chi-squared model \eqref{eq:model-chisq}. 
      We simulate $\nu=1, 2, 3, 6$ (first to last row), at dimensions $p=10^2, 10^3, 10^4$ (left to right column), for a grid of sparsity levels $\beta$ and signal sizes $r$.
      The experiments were repeated 1000 times for each sparsity-signal size combination; darker color indicates higher larger $\mathrm{risk}^{\mathrm{EA}}$. 
      Numerical results are generally in agreement with the boundaries described in Theorem \ref{thm:chi-squared-approx-exact-boundary}; for small $\beta$'s and large $\nu$'s, the phase transitions take place somewhat above the predicted boundaries.
      Other boundaries in the support recovery and the detection problems are plotted for comparison.} 
      \label{fig:phase-simulated-chi-squared-approx-exact-boundary}
\end{figure}

\section{Discussions}
\label{sec:discussions}

We would be remiss not to mention some closely related studies on multiple testing in high-dimensions, and the interesting theoretical results derived therein.

Performance limits of statistical procedures in the additive error model \eqref{eq:model-additive} have been actively studied in recent years.
Specifically, when the approximate support recovery risk \eqref{eq:risk-approximate} is used as the criterion, the asymptotic optimality of the Benjamini-Hochberg procedure \cite{benjamini1995controlling}, and the Cand\'es-Barber procedure \cite{barber2015controlling} was established by \citet*{arias2017distribution} under independent additive errors and one-sided alternatives.

In terms of the more stringent exact support recovery probability \eqref{eq:risk-prob}, several well-known FWER-controlling procedures --- including Bonferroni's procedure --- have been shown to be optimal in the additive error model under one-sided alternatives \cite{gao2020fundamental}.
Optimality results were also obtained for a specific procedure under the expected Hamming loss ($\E[\widehat{S}\triangle S]$) in \cite{butucea2018variable},
where the asymptotic analyses focused exclusively on the two-sided alternatives.

There is a wealth of literature on the so-called sparsistency (i.e., $\P[\widehat{S} = S]\to 1$ as $p\to\infty$) problem in the regression context. 
We refer readers to the recent book by \citet{wainwright2019high} (and in particular, the bibliographical sections of Chapters 7 and 15) for a comprehensive review.
We only mention two additional papers by \citet{ji2012ups} and \citet{jin2014optimality} which derived minimax optimality results under the Hamming loss in this context.

The asymptotic optimality of the Bonferroni and Benjamini-Hochberg procedures 
was analyzed under decision theoretic frameworks in \cite{genovese2002operating, bogdan2011asymptotic, neuvial2012false}, with main focus on location/scale models. 
In particular, these papers show that the statistical risks of the practical procedures come close to that of the oracle procedures under suitable asymptotic regimes.
Strategies for dealing with multiple testing under general distributional assumptions can be found in, e.g., \cite{efron2004large, storey2007optimal, sun2007oracle}, where the non-nulls are assumed to follow a common distribution; the two-sided alternative in the additive error model was featured as the primary example in \cite{sun2007oracle}.
Although weighted sums of false discovery and non-discovery have been studied in the literature, asymmetric statistical risks such as \eqref{eq:risk-exact-approx} and \eqref{eq:risk-approx-exact} are new.
The high-dimensional chi-square model \eqref{eq:model-chisq} also seemed to have received little attention.
While the sparse signal detection problem in the chi-square model has been studied \cite{donoho2004higher}, support recovery problems, to the best of our knowledge, remain unexplored until now.

Finally, asymptotic power approximations of association tests on contingency tables can be found in, e.g., \citet{ferguson2017course} or other texts on asymptotic statistics.
A companion paper to the current work analyzes asymptotic equivalences of several additional common association tests, and implements second order power approximations in a software tool \cite{gao2019upass}. 
The software also facilitates visualization and forensics of reported findings in genetic association studies.

\medskip

Although it is intuitively appealing to consider only data-thresholding procedures in multiple testing problems, such procedures are not always optimal.
Recently, \citet{chen2018scan} showed that thresholding procedures are in fact sub-optimal in the additive models \eqref{eq:model-additive} when errors have heavy (regularly-varying) tails. 
\citet{gao2020fundamental} characterized the conditions under which thresholding procedures are optimal in the exact support recovery problem.
The optimality of data-thresholding procedures in terms of other statistical risks is an open problem that invites a dedicated investigation in a future study. 

\medskip

Keen readers must have noticed the curious asymmetry in Relation \eqref{eq:failure-recovery-implies-risk-1} when we discussed the relationship between the exact support recovery risk \eqref{eq:risk-exact} and the probability of exact support recovery \eqref{eq:risk-prob}.

While a trivial procedure that never rejects and a procedure that always rejects both have $\mathrm{risk}^{\mathrm{E}}$ equal to 1, the converse is not true.
For example, it is possible that a procedure selects the true index set $S$ with probability $1/2$, but makes one false inclusion \emph{and} one false omission simultaneously the other half of the time. 
In this case the procedure will have 
$$\mathrm{risk}^{\mathrm{E}} = 1, \quad \text{and} \quad \P[\widehat{S}=S] = 1/2,$$
showing that the converse of Relation \eqref{eq:failure-recovery-implies-risk-1} is in fact false.

The same argument applies to $\mathrm{risk}^{\mathrm{A}}$:
a procedure may select the true index set $S$ with probability $1/2$, but makes enough false inclusions and omissions other half of the time, so that $\mathrm{risk}^{\mathrm{A}}$ is equal to one.
Recently, \citet{arias2017distribution} commented (in their Remark 2) that a procedure with approximate support recovery risk equal to or exceeding 1 is useless --- an opinion with which we beg to differ, in light of this artificial but legitimate example.

\medskip

The current work focused only on the idealized models \eqref{eq:model-chisq} and \eqref{eq:model-additive} where statistics are \emph{independent}.
In practice, independence is the exception rather than the rule. 
Support recovery problems under dependent observations should be vigorously explored.
There have been some efforts in this direction.
In particular, the boundary for the exact support recovery problem in the additive error model \eqref{eq:model-additive} was shown to hold even under severe dependence and general distributional assumptions \cite{gao2020fundamental}.
We conjecture that other phase transitions also continue to hold, under classes of dependence structures that are ``not too different from independence''.
As an example, in the GWAS application, dependence among the genetic markers at different locations (known as linkage disequilibrium) decay as a function of their physical distances on the genome \cite{bush2012genome}, resulting in locally dependent test statistics.
It would be of great interest to extend the current theory to cover important dependence structures that arise in such applications.

\appendix

\section{A review of common procedures in multiple testing}
\label{sec:procedures}


We recall five thresholding procedures, starting with the well-known Bonferroni's procedure which aims at controlling family-wise error rates.
\begin{definition}[Bonferroni's procedure]
Suppose the errors $\epsilon(j)$'s have a common marginal distribution $F$, Bonferroni's procedure with level $\alpha$ is the thresholding procedure that uses the threshold
\begin{equation} \label{eq:Bonferroni-procedure}
    t_p = F^{\leftarrow}(1 - \alpha/p).
\end{equation}
\end{definition}
The Bonferroni procedure is deterministic (i.e., non data-dependent), and only depends on the dimension of the problem and the null distribution.
A closely related procedure is Sid\'ak's procedure \citep{vsidak1967rectangular},
which is a more aggressive (and also deterministic) thresholding procedure that uses the 
threshold
\begin{equation} \label{eq:Sidak-procedure}
    t_p = F^{\leftarrow}((1 - \alpha)^{1/p}).
\end{equation}

The third procedure, strictly more powerful than Bonferroni's, is the so-called Holm's procedure \citep{holm1979simple}.
On observing the data $x$, its coordinates can be ordered from largest to smallest
$x(i_1) \ge x(i_2)  \ge \ldots \ge x(i_p)$,
where $(i_1, \ldots, i_p)$ is a permutation of $\{1, \ldots, p\}$. 
Denote the order statistics as $x_{[1]}, x_{[2]}, \ldots, x_{[p]}$.
\begin{definition}[Holm's procedure]
Let $i^*$ be the largest index such that
$$
\overline{F}(x_{[i]})) \le \alpha / (p-i+1),\quad \text{for all }\;i\le i^*.
$$
Holm's procedure with level $\alpha$ is the thresholding procedure with threshold
\begin{equation} \label{eq:Holm-procedure}
    t_p(x) = x_{[i^*]}.
\end{equation}
\end{definition}
In contrast to the Bonferroni procedure, Holm's procedure is data-dependent.
A closely related, more aggressive (data-dependent) thresholding procedure is Hochberg's procedure \citep{hochberg1988sharper}
which replaces the index $i^*$ in Holm's procedure with the largest index such that
$$
\overline{F}(x_{[i]}) \le \alpha / (p-i+1).
$$

It can be shown that Bonferroni's procedure and Holm's procedure both control FWER at their nominal levels, regardless of dependence in the data.
In contrast, Sid\'ak's procedure and Hochberg's procedure control FWER at nominal levels when data are independent.

Last but not least, we review the famed Benjamini-Hochberg (BH) procedure \cite{benjamini1995controlling}, which aims at controlling FDR.
Recall the order statistics of our observations $x_{[1]} \ge x_{[2]}  \ge \ldots \ge x_{[p]}$.

\begin{definition}[Benjamini-Hochberg's procedure]
Let $i^*$ be the largest index such that
$$
\overline{F}(x_{[i]}) \le \alpha i/p.
$$
The Benjamini-Hochberg procedure with level $\alpha$ is the thresholding procedure with threshold
\begin{equation} \label{eq:BH-procedure}
    t_p(x) = x_{[i^*]},
\end{equation}
\end{definition}
The BH procedure is shown to control the FDR at level $\alpha$ when the statistics are independent.

\section{Proofs}
\label{sec:proofs}

\subsection{Proof of Lemma \ref{lemma:risk-exact-recovery-probability}}

\begin{proof}[Proof of Lemma \ref{lemma:risk-exact-recovery-probability}]
Notice that $\{\widehat{S}=S\}$ implies $\{\widehat{S}\subseteq S\} \cap \{\widehat{S}\supseteq S\}$, therefore we have for every fixed $p$,
\begin{equation} \label{eq:risk-exact-recovery-probability-proof-1}
    \mathrm{risk}^{\mathrm{E}} 
    = 2 - \P[\widehat{S} \subseteq S] - \P[S \subseteq \widehat{S}] \\
    \le 2 - 2\P[\widehat{S}=S].
\end{equation}
On the other hand, since $\{\widehat{S}\neq S\}$ implies $\{\widehat{S}\not\subseteq S\} \cup \{\widehat{S}\not\supseteq S\}$, we have for every fixed $p$,
\begin{equation} \label{eq:risk-exact-recovery-probability-proof-2}
    1 - \P[\widehat{S}=S]
    = \P[\widehat{S}\neq S]
    \le 2 - \P[\widehat{S} \subseteq S] - \P[S \subseteq \widehat{S}]
    = \mathrm{risk}^{\mathrm{E}}. 
\end{equation}
Relation \eqref{eq:exact-recovery-implies-risk-0} follows from \eqref{eq:risk-exact-recovery-probability-proof-1} and \eqref{eq:risk-exact-recovery-probability-proof-2}, and Relation \eqref{eq:failure-recovery-implies-risk-1} from \eqref{eq:risk-exact-recovery-probability-proof-2}.
\end{proof}

\subsection{Auxiliary facts about chi-square distributions}

We first establish some auxiliary facts about chi-square distributions, used in the proofs of Theorem \ref{thm:chi-squared-exact-boundary} and Theorem \ref{thm:chi-squared-approx-boundary}.

\begin{lemma}[Rapid variation of chi-square distribution tails] \label{lemma:rapid-variation-chisq}
The central chi-square distribution with $\nu$ degrees of freedom has rapidly varying tails.
That is, 
\begin{equation} \label{eq:rapid-variation-chisq}
    \lim_{x\to\infty}\frac{\P[\chi_\nu^2(0)>tx]}{\P[\chi_\nu^2(0)>x]} = 
    \begin{cases}
    0, & t > 1 \\
    1, & t = 1 \\
    \infty, & 0 < t < 1
\end{cases},
\end{equation}
where we overloaded the notation $\chi_\nu^2(0)$ to represent a random variable with the chi-square distribution.
\end{lemma}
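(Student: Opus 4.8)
The plan is to reduce everything to the one-term tail asymptotic of the central chi-square survival function and then read off the three cases. Write $\overline{F}(x) = \P[\chi^2_\nu(0) > x] = \int_x^\infty f_\nu(u)\,\diff u$, where $f_\nu(u) = (2^{\nu/2}\Gamma(\nu/2))^{-1} u^{\nu/2-1} e^{-u/2}$ is the chi-square density.

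First I would establish that
\[
\overline{F}(x) \sim \frac{x^{\nu/2-1}e^{-x/2}}{2^{\nu/2-1}\Gamma(\nu/2)}, \qquad x\to\infty.
\]
This follows from a single integration by parts of $\int_x^\infty u^{\nu/2-1} e^{-u/2}\,\diff u$: the boundary term produces the leading factor $2x^{\nu/2-1}e^{-x/2}$, while the remaining integral has the same form with the exponent lowered by one and is therefore $o$ of the leading term. (This is equivalently the classical asymptotic of the upper incomplete gamma function.)

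Next, the case $t=1$ is trivial, since the ratio equals $1$ identically. For $t \neq 1$, substituting the asymptotic into both numerator and denominator gives
\[
\frac{\overline{F}(tx)}{\overline{F}(x)} \sim t^{\,\nu/2-1}\, e^{-(t-1)x/2}, \qquad x\to\infty.
\]
Here $t^{\,\nu/2-1}$ is a fixed positive constant, so the exponential governs the limit: for $t>1$ the exponent tends to $-\infty$ and the ratio vanishes, whereas for $0<t<1$ the exponent tends to $+\infty$ and the ratio diverges, yielding the three advertised cases.

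An equivalent and slightly shorter route avoids computing the constant altogether by applying L'Hôpital's rule directly to the $0/0$ form: since $\frac{\diff}{\diff x}\overline{F}(tx) = -t f_\nu(tx)$ and $\frac{\diff}{\diff x}\overline{F}(x) = -f_\nu(x)$, the ratio of derivatives simplifies to $t^{\,\nu/2} e^{-(t-1)x/2}$, whose limit in $[0,\infty]$ is $0$, $1$, or $\infty$ exactly according to whether $t$ exceeds, equals, or falls below $1$. There is no genuine obstacle here; the only points requiring care are verifying that the remainder integral in the integration by parts is truly lower-order, or---in the L'Hôpital version---checking that the hypotheses remain valid when the limit of the derivative ratio is $+\infty$ (both survival functions vanish at infinity and the derivative ratio has a limit in the extended reals, which suffices).
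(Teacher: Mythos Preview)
Your proof is correct and takes a genuinely different route from the paper. The paper splits into cases: for $\nu=1$ it invokes the Normal connection, and for $\nu\ge 2$ it imports two-sided tail bounds of \citet{inglot2010inequalities},
\[
\tfrac{1}{2}\mathcal{E}_\nu(x) \le \overline{F}(x) \le \tfrac{x}{(x-\nu+2)\sqrt{\pi}}\,\mathcal{E}_\nu(x),
\qquad \mathcal{E}_\nu(x)=\exp\!\Big\{-\tfrac{1}{2}\big[x-\nu-(\nu-2)\log(x/\nu)+\log\nu\big]\Big\},
\]
and then sandwiches the ratio $\overline{F}(tx)/\overline{F}(x)$ between multiples of $\mathcal{E}_\nu(tx)/\mathcal{E}_\nu(x)=\exp\{-\tfrac12[(t-1)x-(\nu-2)\log t]\}$. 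Your argument instead derives the one-term asymptotic $\overline{F}(x)\sim 2f_\nu(x)$ directly via integration by parts (equivalently, the standard upper-incomplete-gamma asymptotic), or bypasses it entirely via L'H\^opital applied to the density ratio $t^{\nu/2}e^{-(t-1)x/2}$. Your version is more self-contained---no external reference, no case split on $\nu$---and the L'H\^opital variant in particular is shorter. The paper's route buys you explicit non-asymptotic bounds (which it does not actually need here), at the cost of citing a result and handling $\nu=1$ separately because the Inglot inequality is stated only for $\nu\ge 2$.
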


\begin{proof}[Proof of Lemma \ref{lemma:rapid-variation-chisq}]
When $\nu=1$, the chi-square distribution reduces to a squared Normal, and \eqref{eq:rapid-variation-chisq} follows from the rapid variation of the standard Normal distribution.
For $\nu\ge2$, we recall the following bound on tail probabilities (see, e.g., \citep{inglot2010inequalities}),
$$
\frac{1}{2}\mathcal{E}_\nu(x) \le \P[\chi_\nu^2(0)>x] \le \frac{x}{(x-\nu+2)\sqrt{\pi}} \mathcal{E}_\nu(x), \quad \nu\ge2,\;x>\nu-2,
$$
where $\mathcal{E}_\nu(x) = \exp\left\{-\frac{1}{2}[(x-\nu-(\nu-2)\log(x/\nu) + \log\nu]\right\}$.
Therefore, we have 
$$
\frac{(x-\nu+2)\sqrt{\pi}}{2x}\frac{\mathcal{E}_\nu(tx)}{\mathcal{E}_\nu(x)} 
\le \frac{\P[\chi_\nu^2(0)>tx]}{\P[\chi_\nu^2(0)>x]}
\le \frac{2tx}{(tx-\nu+2)\sqrt{\pi}}\frac{\mathcal{E}_\nu(tx)}{\mathcal{E}_\nu(x)},
$$
where ${\mathcal{E}_\nu(tx)}/{\mathcal{E}_\nu(x)} = \exp\{-\frac{1}{2}[(t-1)x-(\nu-2)\log{t}]\}$ converges to $0$ or $\infty$ depending on whether $t>1$ or $0<t<1$.
The case where $t=1$ is trivial.
\end{proof}

\begin{corollary} \label{cor:relative-stability}
Maxima of independent observations from central chi-square distributions with $\nu$ degrees of freedom are relatively stable. 
Specifically, let $\epsilon_p = \left(\epsilon_p(j)\right)_{j=1}^p$ be independently and identically distributed (iid) $\chi_\nu^2(0)$ random variables. 
Define the sequence $(u_p)_{p=1}^\infty$ to be the $(1-1/p)$-th generalized quantiles, i.e., 
\begin{equation} \label{eq:quantiles}
    u_p = F^\leftarrow(1 - 1/p),
\end{equation}
where $F$ is the central chi-square distributions with $\nu$ degrees of freedom.
The triangular array ${\cal E} = \{\epsilon_p, p\in\N\}$ has relatively stable (RS) maxima, i.e.,
\begin{equation} \label{eq:RS-condition}
    \frac{1}{u_{p}} M_p := \frac{1}{u_{p}} \max_{j=1,\ldots,p} \epsilon_p(j) \xrightarrow{\P} 1,
\end{equation}
as $p\to\infty$.
\end{corollary}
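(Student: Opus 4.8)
The plan is to prove the two one-sided statements $\P[M_p > (1+\epsilon)u_p] \to 0$ and $\P[M_p \le (1-\epsilon)u_p] \to 0$ for an arbitrary fixed $\epsilon \in (0,1)$, which together are exactly the relative stability \eqref{eq:RS-condition}. Writing $\overline{F} = 1 - F$ for the survival function of $\chi^2_\nu(0)$, I would first record the one structural fact that makes the bookkeeping clean: since the chi-square law is absolutely continuous with a density positive on $(0,\infty)$, $F$ is continuous and strictly increasing there, so the generalized quantile $u_p$ in \eqref{eq:quantiles} satisfies the exact identity $\overline{F}(u_p) = 1/p$. In particular $u_p \to \infty$, since $\overline{F}(u_p) = 1/p \to 0$. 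This lets me replace every factor $p\,\overline{F}(u_p)$ by $1$ and rewrite tail ratios directly in the form handled by Lemma \ref{lemma:rapid-variation-chisq}.

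For the upper bound I would use a union bound together with the quantile identity,
\[
\P[M_p > (1+\epsilon)u_p] \le p\,\overline{F}\big((1+\epsilon)u_p\big) = \frac{\overline{F}\big((1+\epsilon)u_p\big)}{\overline{F}(u_p)},
\]
and observe that, because $u_p \to \infty$ and $1+\epsilon > 1$, the right-hand side tends to $0$ by Lemma \ref{lemma:rapid-variation-chisq} with $t = 1+\epsilon$. For the lower bound I would exploit independence and the elementary inequality $1 - x \le e^{-x}$,
\[
\P[M_p \le (1-\epsilon)u_p] = \big(1 - \overline{F}((1-\epsilon)u_p)\big)^p \le \exp\!\Big(-\tfrac{\overline{F}((1-\epsilon)u_p)}{\overline{F}(u_p)}\Big),
\]
and note that since $1-\epsilon < 1$, Lemma \ref{lemma:rapid-variation-chisq} with $t = 1-\epsilon$ sends the ratio in the exponent to $+\infty$, hence the whole probability to $0$. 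Combining the two displays, for every $\epsilon \in (0,1)$ both $\P[M_p/u_p > 1+\epsilon]$ and $\P[M_p/u_p < 1-\epsilon]$ vanish as $p \to \infty$, giving \eqref{eq:RS-condition}.

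The honest assessment is that there is no serious obstacle here once Lemma \ref{lemma:rapid-variation-chisq} is in hand: the entire analytic content of the corollary is the rapid variation of the chi-square tail, and the maximum-to-quantile argument above is routine extreme-value reasoning. The only two points that genuinely require care are (i) justifying the exact quantile identity $\overline{F}(u_p) = 1/p$ from continuity and strict monotonicity of $F$, which eliminates all $p\,\overline{F}(u_p)$ asymptotic bookkeeping, and (ii) keeping straight the two directions of the rapid-variation ratio — the tail must thin ($t = 1+\epsilon > 1$, ratio $\to 0$) to kill the upper deviation, and fatten ($t = 1-\epsilon < 1$, ratio $\to \infty$) to kill the lower deviation. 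Both are precisely the cases supplied by \eqref{eq:rapid-variation-chisq}.
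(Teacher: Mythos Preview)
Your proof is correct. It differs from the paper's in that the paper simply invokes Gnedenko's classical equivalence --- for distributions with $F(x)<1$ for all finite $x$, rapid variation of the tail is equivalent to relative stability of the iid maxima --- and then appeals to Lemma~\ref{lemma:rapid-variation-chisq} for the rapid-variation input. You instead prove the implication ``rapid variation $\Rightarrow$ relative stability'' by hand via the union bound for the upper tail and the independence/$1-x\le e^{-x}$ argument for the lower tail. Your route is more elementary and fully self-contained, at the cost of a few extra lines; the paper's route is a one-line citation but requires the reader to know or look up Gnedenko's result. Both rest on exactly the same analytic content, namely \eqref{eq:rapid-variation-chisq}.
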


\begin{proof}[Proof of Corollary \ref{cor:relative-stability}]
When $F(x)<1$ for all finite $x$, \citet{gnedenko1943distribution} showed that the distribution $F$ has rapidly varying tails if and only if the maxima of independent observations from $F$ are relatively stable.
Rapid variation follows from Lemma \ref{lemma:rapid-variation-chisq}.
\end{proof}

\begin{lemma}[Stochastic monotonicity] \label{lemma:stochastic-monotonicity}
The non-central chi-square distribution is stochastically monotone in its non-centrality parameter.
Specifically, for two non-central chi-square distributions both with $\nu$ degrees of freedom, and non-centrality parameters $\lambda_1 \le \lambda_2$, we have $\chi^2_\nu(\lambda_1) \stackrel{\mathrm{d}}{\le} \chi^2_\nu(\lambda_2)$. 
That is,
\begin{equation} \label{eq:stochastic-monotonicity}
    \P[\chi^2_\nu(\lambda_1) \le t] \ge \P[\chi^2_\nu(\lambda_2) \le t], \quad \text{for any}\quad t\ge0.
\end{equation}
where we overloaded the notation $\chi_\nu^2(\lambda)$ to represent a random variable with the chi-square distribution with non-centrality parameter $\lambda$ and degree-of-freedom parameter $\nu$.
\end{lemma}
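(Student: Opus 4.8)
The plan is to prove the almost-sure version of the ordering by exhibiting an explicit coupling, from which \eqref{eq:stochastic-monotonicity} follows immediately. The key structural fact I would invoke is the Poisson-mixture representation of the non-central chi-square: if $K \sim \mathrm{Poisson}(\lambda/2)$ and, conditionally on $K$, the variable $X$ has a central $\chi^2_{\nu+2K}(0)$ distribution, then $X \sim \chi^2_\nu(\lambda)$. Equivalently, the distribution function admits the mixture form
$$
\P[\chi^2_\nu(\lambda)\le t] = \sum_{k=0}^\infty e^{-\lambda/2}\frac{(\lambda/2)^k}{k!}\,\P[\chi^2_{\nu+2k}(0)\le t].
$$

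First I would construct the coupling on a common probability space. Set $K_1 \sim \mathrm{Poisson}(\lambda_1/2)$ and $K' \sim \mathrm{Poisson}((\lambda_2-\lambda_1)/2)$ independently, and put $K_2 = K_1 + K'$; by additivity of independent Poisson variables, $K_2 \sim \mathrm{Poisson}(\lambda_2/2)$, and evidently $K_1 \le K_2$ almost surely. Next, conditionally on $(K_1, K')$, draw independent central chi-squares $Y \sim \chi^2_{\nu+2K_1}(0)$ and $Y' \sim \chi^2_{2K'}(0)$, and define $X_1 = Y$ and $X_2 = Y + Y'$. By additivity of independent central chi-squares, $Y + Y'$ is conditionally $\chi^2_{\nu+2K_2}(0)$, so marginally $X_1 \sim \chi^2_\nu(\lambda_1)$ and $X_2 \sim \chi^2_\nu(\lambda_2)$, as required.

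The decisive observation is that, by construction, $X_2 = X_1 + Y'$ with $Y' \ge 0$ almost surely, hence $X_1 \le X_2$ pointwise on the coupling space. This almost-sure domination yields $\P[X_1 \le t] \ge \P[X_2 \le t]$ for every $t \ge 0$, which is exactly the claimed inequality \eqref{eq:stochastic-monotonicity}.

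I expect no genuine obstacle, since this is a standard monotonicity fact; the only subtlety is to avoid the tempting but flawed shortcut of coupling through a single standard-normal vector with shifted means, i.e.\ writing $\chi^2_\nu(\lambda_j) = (Z_1+\sqrt{\lambda_j})^2 + \sum_{i\ge 2} Z_i^2$ on a common $(Z_1,\ldots,Z_\nu)$. That coupling fails to deliver pointwise domination, because when $Z_1$ is sufficiently negative the $\lambda_1$-term can exceed the $\lambda_2$-term. The Poisson-mixture coupling sidesteps this issue by adding an \emph{independent nonnegative increment} rather than perturbing a shared summand; the same device simultaneously encodes both the stochastic monotonicity of the Poisson family in its rate and the stochastic monotonicity of central chi-squares in their degrees of freedom.
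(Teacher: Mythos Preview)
Your proof is correct and takes a genuinely different route from the paper. The paper uses the representation $\chi^2_\nu(\lambda) \stackrel{\mathrm{d}}{=} Z_1^2+\cdots+Z_{\nu-1}^2+(Z_\nu+\sqrt{\lambda})^2$, reduces to the one-dimensional claim that $\P[(Z+\sqrt{\lambda})^2\le t]$ is non-increasing in $\lambda$, rewrites this as $\Phi(-\sqrt{\lambda}+\sqrt{t})-\Phi(-\sqrt{\lambda}-\sqrt{t})$, and differentiates in $\lambda$ to check the sign using the symmetry and monotonicity of the normal density. So the paper works analytically with the very representation you flag as unsuitable for a \emph{pointwise} coupling, but it never attempts a coupling with it; instead it proves the CDF ordering directly. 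Your Poisson-mixture coupling is more probabilistic and delivers the stronger almost-sure conclusion $X_1\le X_2$ for free, at the cost of invoking the mixture identity as a black box. The paper's calculus argument stays closer to the normal-sum representation that is used repeatedly elsewhere in the proofs of Theorems~\ref{thm:chi-squared-exact-boundary}--\ref{thm:chi-squared-approx-exact-boundary}, which may explain the choice. Both arguments are standard; your remark that the shared-$Z$ coupling fails pointwise is accurate and worth keeping in mind, though it is not a gap in the paper's approach since the paper never relies on that coupling.
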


\begin{proof}[Proof of Lemma \ref{lemma:stochastic-monotonicity}]
Recall that non-central chi-square distributions can be written as sums of $\nu-1$ standard normal random variables and a non-central normal random variable with mean $\sqrt{\lambda}$ and variance 1,
\begin{equation*}
    \chi_\nu^2(\lambda) 
    \stackrel{\mathrm{d}}{=} Z_1^2 + \ldots + Z_{\nu-1}^2 + (Z_\nu + \sqrt{\lambda})^2.
\end{equation*}
Therefore, it suffices to show that $\P[(Z+\sqrt{\lambda})^2 \le t]$ is non-increasing in $\lambda$ for any $t\ge0$, where $Z$ is a standard normal random variable.
We rewrite this expression in terms of standard normal probability function $\Phi$,
\begin{align}
    \P[(Z+\sqrt{\lambda})^2 \le t] 
    &= \P[-\sqrt{\lambda} - \sqrt{t} \le Z \le -\sqrt{\lambda} + \sqrt{t}] \nonumber \\
    &= \Phi(-\sqrt{\lambda} + \sqrt{t}) - \Phi(-\sqrt{\lambda} - \sqrt{t}). \label{eq:stochastic-monotonicity-proof-1}
\end{align}
The derivative of the last expression (with respect to $\lambda$) is 
\begin{equation} \label{eq:stochastic-monotonicity-proof-2}
    \frac{1}{2\sqrt{\lambda}} \left(\phi(\sqrt{\lambda} + \sqrt{t}) - \phi(\sqrt{\lambda} - \sqrt{t})\right) 
    = \frac{1}{2\sqrt{\lambda}} \left(\phi(\sqrt{\lambda} + \sqrt{t}) - \phi(\sqrt{t} - \sqrt{\lambda})\right),
\end{equation}
where $\phi$ is the density of the standard normal distribution.
Notice that we have used the symmetry of $\phi$ around 0 in the last expression.

Since $0 \le \max\{\sqrt{\lambda} - \sqrt{t}, \sqrt{t} - \sqrt{\lambda}\} < \sqrt{t} + \sqrt{\lambda}$ when $t>0$, by monotonicity of the normal density on $(0,\infty)$, we conclude that the derivative \eqref{eq:stochastic-monotonicity-proof-2} is indeed negative.
Therefore, \eqref{eq:stochastic-monotonicity-proof-1} is decreasing in $\lambda$, and \eqref{eq:stochastic-monotonicity} follows for $t>0$.
For $t = 0$, equality holds in \eqref{eq:stochastic-monotonicity} with both probabilities being 0.
\end{proof}

Finally, we derive asymptotic expressions for  chi-square quantiles.

\begin{lemma}[Chi-square quantiles] \label{lemma:chisq-quantiles}
Let $F$ be the central chi-square distributions with $\nu$ degrees of freedom, and let $u(y)$ be the $(1-y)$-th generalized quantile of $F$, i.e.,
\begin{equation} \label{eq:quantiles-generic}
    u(y) = F^\leftarrow(1 - y).
\end{equation}
Then 
\begin{equation}
    u(y) \sim 2\log(1/y), \quad \text{as }y\to0. 
\end{equation}
\end{lemma}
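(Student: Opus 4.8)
The plan is to translate the statement about quantiles into a statement about the logarithm of the tail (survival) function, which I can then control using the two-sided bound already invoked in the proof of Lemma~\ref{lemma:rapid-variation-chisq}.

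First I would rewrite the defining relation. By definition $u(y) = F^\leftarrow(1-y)$ satisfies $\overline{F}(u(y)) = y$, where $\overline{F} = 1-F$ is the survival function of $\chi^2_\nu(0)$. Since $\overline{F}$ is continuous and strictly decreasing on $(0,\infty)$ with $u(y)\to\infty$ as $y\to 0$, the substitution $x = u(y)$ (so that $y = \overline{F}(x)$ and $\log(1/y) = -\log\overline{F}(x)$) sets up a bijection between $y\to 0$ and $x\to\infty$. Hence the claim $u(y)\sim 2\log(1/y)$ is \emph{equivalent} to the log-tail asymptotic
\[
    -\log\overline{F}(x) \sim \tfrac{1}{2}x, \qquad x\to\infty .
\]

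For $\nu\ge 2$ I would invoke the bound of Inglot used above,
\[
    \tfrac{1}{2}\,\mathcal{E}_\nu(x) \le \overline{F}(x) \le \frac{x}{(x-\nu+2)\sqrt{\pi}}\,\mathcal{E}_\nu(x),
\]
with $\mathcal{E}_\nu(x) = \exp\{-\tfrac{1}{2}[x-\nu-(\nu-2)\log(x/\nu)+\log\nu]\}$. Taking logarithms gives $\log\overline{F}(x) = \log\mathcal{E}_\nu(x) + O(\log x)$, and since $\log\mathcal{E}_\nu(x) = -\tfrac{1}{2}x + O(\log x)$, we obtain $-\log\overline{F}(x) = \tfrac{1}{2}x + O(\log x) \sim \tfrac{1}{2}x$. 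For the remaining case $\nu = 1$ I would write $\overline{F}(x) = \P[Z^2 > x] = 2\,\overline{\Phi}(\sqrt{x})$ for a standard normal $Z$ and use the Gaussian tail asymptotic $\overline{\Phi}(z)\sim\phi(z)/z$ to get $\log\overline{F}(x) = -\tfrac{1}{2}x + O(\log x)$, the same conclusion. Substituting $x = u(y)$ back into the log-tail asymptotic then yields $-\log y \sim \tfrac{1}{2}u(y)$, i.e.\ $u(y)\sim 2\log(1/y)$, as required.

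The only point requiring care is confirming that the polynomial and constant prefactors in the sandwiching bounds (the $\tfrac{1}{2}$ on the left and the $x/((x-\nu+2)\sqrt{\pi})$ on the right) are negligible after taking logarithms. This is the crux of the argument, but it is immediate once one observes that the logarithm of any power of $x$ is $O(\log x) = o(x)$, so these factors wash out on the logarithmic scale and do not affect the leading term $-\tfrac{1}{2}x$.
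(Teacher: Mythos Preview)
Your proof is correct, but it takes a somewhat different route from the paper's. The paper handles $\nu\ge 2$ by quoting a separate \emph{quantile} inequality from \citet{inglot2010inequalities},
\[
    \nu + 2\log(1/y) - 5/2 \;\le\; u(y) \;\le\; \nu + 2\log(1/y) + 2\sqrt{\nu\log(1/y)} \qquad (y\le 0.17),
\]
and then observes that both sides are asymptotic to $2\log(1/y)$. You instead reduce the quantile asymptotic to the equivalent log-tail asymptotic $-\log\overline{F}(x)\sim x/2$ and control the latter via the \emph{tail} bound already used in Lemma~\ref{lemma:rapid-variation-chisq}. For $\nu=1$ both arguments go through the standard normal, though the paper works directly with $\Phi^\leftarrow$ while you work with $\overline{\Phi}$ and Mills' ratio. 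The net effect is that your argument is self-contained relative to the tools already in play (no new external inequality), at the modest cost of the extra inversion step; the paper's version is slightly more direct but imports one more result from \citet{inglot2010inequalities}.
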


\begin{proof}[Proof of Lemma \ref{lemma:chisq-quantiles}]
The case where $\nu=1$ follows from the well-known formula for Normal quantiles (see, e.g., Proposition 1.1 in \cite{gao2020fundamental})
$$
F^\leftarrow(1 - y) = \Phi^\leftarrow(1-y/2)\sim\sqrt{2\log{(2/y)}}\sim\sqrt{2\log{(1/y)}}.
$$
The case where $\nu\ge2$ follows from the following estimates of high quantiles of chi-square distributions (see, e.g., \citep{inglot2010inequalities}),
$$
    \nu +  2\log(1/y) -5/2 \le u(y) \le \nu +  2\log(1/y) + 2\sqrt{\nu\log(1/y)}, \quad \text{for all }y\le0.17,
$$
where both the lower and upper bound are asymptotic to $2\log(1/y)$.
\end{proof}

\subsection{Proof of Theorem \ref{thm:chi-squared-exact-boundary}}
\label{subsec:proof-chi-squared-exact-boundary}

\begin{proof}[Proof of Theorem \ref{thm:chi-squared-exact-boundary}]
We first prove the sufficient condition.
The Bonferroni procedure sets the threshold at $t_p = F^\leftarrow(1-\alpha/p)$, which, by Lemma \ref{lemma:chisq-quantiles}, is asymptotic to $2\log{p} - 2\log{\alpha}$.
By the assumption on $\alpha$ in \eqref{eq:slowly-vanishing-error}, for any $\delta>0$, we have $p^{-\delta}=o(\alpha)$.
Therefore, we have $-\log\alpha\le\delta\log{p}$ for large $p$, and
\begin{equation} \label{eq:chi-square-sufficient-0}
    1 \le \limsup_{p\to\infty}\frac{2\log{p} - 2\log{\alpha}}{2\log{p}} \le 1+\delta,
\end{equation}
for any $\delta>0$.
Hence, $t_p\sim 2\log{p}$.

The condition $\underline{r} > {{g}}(\beta)$ implies, after some algebraic manipulation,
$\sqrt{\underline{r}} -\sqrt{1-\beta} > 1$.
Therefore, we can pick $q>1$ such that 
\begin{equation} \label{eq:choice-of-q}
    \sqrt{\underline{r}} -\sqrt{1-\beta} > \sqrt{q} > 1.
\end{equation}
Setting the $t^* = t^*_p = 2q\log{p}$, we have $t_p < t^*_p$ for large $p$.

On the one hand, $\text{FWER} = 1 - \P[\widehat{S}_p \subseteq S_p]$ vanishes under the Bonferroni procedure with $\alpha\to0$.
On the other hand, for large $p$, the probability of no missed detection is bounded from below by
\begin{equation} \label{eq:chi-square-sufficient-1}
    \P[\widehat{S}_p \supseteq S_p] 
    = \P[\min_{i\in S} x(i) \ge t_p] 
    \ge \P[\min_{i\in S} x(i) \ge t^*] 
    \ge 1 - p^{1-\beta}\P[\chi_\nu^2(\underline{\Delta}) < t^*],
\end{equation}
where we have used the fact that signal sizes are bounded below by $\underline{\Delta}$, and the stochastic monotonicity of chi-square distributions (Lemma \ref{lemma:stochastic-monotonicity}) in the last inequality.
Writing
$$
\chi_\nu^2(\underline{\Delta}) \stackrel{\mathrm{d}}{=} Z_1^2 + \ldots + Z_{\nu-1}^2 + (Z_\nu + \sqrt{\underline{\Delta}})^2
$$
where $Z_i$'s are iid standard normal variables, we have
\begin{align}
    \P[\chi_\nu^2({\underline{\Delta}}) < t^*]
    &\le \P[(Z_\nu+\sqrt{\underline{\Delta}})^2 < t^*] 
    = \P[|Z_\nu+\sqrt{\underline{\Delta}}| < \sqrt{t^*}]  \nonumber \\
    &\le \P\left[Z_\nu < - \sqrt{\underline{\Delta}} +  \sqrt{t^*}\right] \nonumber \\
    &= \P\left[Z_\nu < \sqrt{2\log{p}}\left(\sqrt{q} - \sqrt{\underline{r}}\right)\right]. \label{eq:chi-square-sufficient-2}
\end{align}
By our choice of $q$ in \eqref{eq:choice-of-q}, the last probability in \eqref{eq:chi-square-sufficient-2} can be bounded from above by 
\begin{align*}
    \P\Big[Z_\nu < -\sqrt{2(1-\beta)\log{p}}\Big]
    &\sim \frac{\phi\left(-\sqrt{2(1-\beta)\log{p}}\right)}{\sqrt{2(1-\beta)\log{p}}} \\
    &= \frac{1}{\sqrt{2(1-\beta)\log{p}}}p^{-(1-\beta)},
\end{align*}
where the first line uses Mill's ratio for Gaussian distributions.
This, combined with \eqref{eq:chi-square-sufficient-1}, completes the proof of the sufficient condition for the Bonferroni's procedure.

Under the assumption of independence, Sid\'ak's, Holm's, and Hochberg's procedures are strictly more powerful than Bonferroni's procedure, while controlling FWER at the nominal levels.
Therefore, the risks of exact support recovery for these procedures also vanishes.
This completes the proof for the first part of Theorem \ref{thm:chi-squared-exact-boundary}.

We now show the necessary condition. 
We first normalize the maxima by the chi-square quantiles $u_p = F^{\leftarrow}(1-1/p)$, where $F$ is the distribution of a (central) chi-square random variable,
\begin{equation} \label{eq:chi-square-necessary-0}
 \P[\widehat{S}_p = S_p] \le \P\left[M_{S^c} <  t_p \le m_{S} \right]
  \le \P\left[ \frac{M_{S^c}}{u_p} < \frac{m_S}{u_p} \right],
\end{equation}
where $M_{S^c} = \max_{i\in S^c}x(i)$ and $m_{S} = \min_{i\in S}x(i)$.
By the relative stability of chi-square random variables (Corollary \ref{cor:relative-stability}), we know that ${M_{S^c}}/{u_{|S^c|}}\to1$ in probability. 
Further, using the expression for $u_p$ (Lemma \ref{lemma:chisq-quantiles}), we obtain
$$
\frac{u_{p-p^{1-\beta}}}{u_{p}} \sim \frac{2\log{(p-p^{1-\beta})}}{2\log{p}} = \frac{\log{p}+\log{(1-p^{-\beta})}}{\log{p}} \sim 1.
$$
Therefore, the left-hand-side of the last probability in \eqref{eq:chi-square-necessary-0} converges to 1,
\begin{equation} \label{eq:chi-square-necessary-1}
    \frac{M_{S^c}}{u_{p}} = \frac{M_{S^c}}{u_{p-p^{1-\beta}}} \frac{u_{p-p^{1-\beta}}}{u_{p}} \stackrel{\P}{\longrightarrow} 1.
\end{equation}

Meanwhile, for any $i\in S$, by Lemma \ref{lemma:stochastic-monotonicity} and the fact that signal sizes are bounded above by $\overline{\Delta}$, we have,
\begin{equation*}
    {\chi_\nu^2(\lambda(i))} \stackrel{\mathrm{d}}{\le}
    {\chi_\nu^2(\overline{\Delta})} \stackrel{\mathrm{d}}{=} 
    {Z_1^2 + \ldots + Z_{\nu-1}^2 + \left(Z_\nu + \sqrt{\overline{\Delta}}\right)^2}.
\end{equation*}
Dividing through by $u_p$, and taking minimum over $S$, we obtain
\begin{equation} \label{eq:chi-square-necessary-3}
    \frac{m_S}{u_p} 
    = \min_{i\in S} \frac{\chi_\nu^2(\lambda(i))}{u_p} 
    \stackrel{\mathrm{d}}{\le} 
    \min_{i\in S}\left\{\frac{Z_1^2(i) + \ldots + Z_{\nu-1}^2(i)}{u_p} + \frac{(Z_\nu(i) + \sqrt{\overline{\Delta}})^2}{u_p}\right\}.
\end{equation}
Let $i^\dagger = i^\dagger_p$ be the index minimizing the second term in \eqref{eq:chi-square-necessary-3}, i.e.,
\begin{equation}
    i^\dagger := \argmin_{i\in S} \frac{(Z_\nu(i) + \sqrt{\overline{\Delta}})^2}{u_p}
    = \argmin_{i\in S} f_p\left(Z_\nu(i)\right),
\end{equation}
where $f_p(x):=(x+\sqrt{\overline{\Delta}})^2/(2\log{p})$. 
We shall first show that 
\begin{equation} \label{eq:chi-square-necessary-4}
    \P[ f_p(Z_\nu(i^\dagger)) < 1 -\delta ] \to 1,
\end{equation}
for some small $\delta>0$.
On the one hand, we know (by solving a quadratic inequality) that
\begin{equation} \label{eq:chi-square-necessary-5}
    f_p(x)<1-\delta \iff \frac{x}{\sqrt{2\log{p}}} \in (-(\sqrt{\overline{r}}+\sqrt{1-\delta}), -(\sqrt{\overline{r}}-\sqrt{1-\delta})).
\end{equation}
On the other hand, we know (by relative stability of iid Gaussians \cite{gao2020fundamental}) that 
\begin{equation} \label{eq:chi-square-necessary-6}
    \frac{\min_{i\in S} Z_\nu(i)}{\sqrt{2\log{p}}}
    \to -\sqrt{1-\beta} \quad\text{in probability}.
\end{equation}
Further, by the assumption on the signal sizes $\overline{r} < (1+\sqrt{1-\beta})^2$, we have,
\begin{equation*}
    -(\sqrt{\overline{r}}+1) < -1 <- \sqrt{1-\beta} < - (\sqrt{\overline{r}}-1).
\end{equation*}
Therefore we can picking a small $\delta>0$ such that 
\begin{equation} \label{eq:chi-square-necessary-7}
    -(\sqrt{\overline{r}}+1) < -(\sqrt{\overline{r}}+\sqrt{1-\delta})
    < - \sqrt{1-\beta}
    < - (\sqrt{\overline{r}}-\sqrt{1-\delta})
    < - (\sqrt{\overline{r}}-1).
\end{equation}
Combining \eqref{eq:chi-square-necessary-5}, \eqref{eq:chi-square-necessary-6}, and \eqref{eq:chi-square-necessary-7}, we obtain
\begin{align*}
    \P\left[\min_{i\in S} f_p(Z_\nu(i)) < 1-\delta\right]
    &= \P\left[ f_p(Z_\nu(i^\dagger)) < 1-\delta \right] \\
    &\ge \P\left[ f_p\left(\min_{i\in S}Z_\nu(i)\right) < 1-\delta \right] \to 1,
\end{align*}
and we arrive at \eqref{eq:chi-square-necessary-4}.
As a corollary, since $u_p\sim2\log{p}$, it follows that
\begin{equation} \label{eq:chi-square-necessary-8}
    \P\left[\min_{i\in S}\frac{(Z_\nu(i) + \sqrt{\overline{\Delta}})^2}{u_p} < 1-\delta\right]\to1.
\end{equation}

Finally, by independence between $Z_1^2(i)+\ldots+Z_{\nu-1}^2(i)$ and $(Z_\nu^2(i)+\sqrt{\overline{\Delta}})^2$, and the fact that $i^\dagger$ is a function of only the latter, we have
$$
Z_1^2(i^\dagger)+\ldots+Z_{\nu-1}^2(i^\dagger) 
\stackrel{\mathrm{d}}{=} Z_1^2(i)+\ldots+Z_{\nu-1}^2(i) 
\quad \text{for all} \;\; i\in S.
$$
Therefore, $Z_1^2(i^\dagger)+\ldots+Z_{\nu-1}^2(i^\dagger) = O_\P(1)$, and 
\begin{equation} \label{eq:chi-square-necessary-9}
    \frac{Z_1^2(i^\dagger)+\ldots+Z_{\nu-1}^2(i^\dagger)}{u_p} \to 0 \quad \text{in probability}. 
\end{equation}
Together, \eqref{eq:chi-square-necessary-8} and \eqref{eq:chi-square-necessary-9} imply that
\begin{align}
    \P\left[\frac{m_S}{u_p}<1-\delta\right]
    &\ge \P\left[\min_{i\in S}\left\{\frac{Z_1^2(i) + \ldots + Z_{\nu-1}^2(i)}{u_p} + \frac{(Z_\nu(i) + \sqrt{\overline{\Delta}})^2}{u_p}\right\} < 1-\delta\right] \nonumber \\
    &\ge \P\left[\frac{Z_1^2(i^\dagger) + \ldots + Z_{\nu-1}^2(i^\dagger)}{u_p} + \frac{(Z_\nu(i^\dagger) + \sqrt{\overline{\Delta}})^2}{u_p} < 1-\delta\right] \to 1. \label{eq:chi-square-necessary-10}
\end{align}
In view of \eqref{eq:chi-square-necessary-0}, \eqref{eq:chi-square-necessary-1}, and \eqref{eq:chi-square-necessary-10}, we conclude that exact recovery cannot succeed with any positive probability.
The proof of the necessary condition is complete.
\end{proof}

\subsection{Proof of Theorem \ref{thm:chi-squared-approx-boundary}}
\label{subsec:proof-chi-squared-approx-boundary}

We first show the necessary condition. 
That is, when $\overline{r} < \beta$, no thresholding procedure is able to achieve approximate support recovery.
The main structure of the proof follows that of Theorem 1 in \citet{arias2017distribution}. 
Our arguments, however, allow for unequal signal sizes; these arguments can in turn be used to generalize the results in \cite{arias2017distribution}.

\begin{proof}[Proof of necessary condition in Theorem \ref{thm:chi-squared-approx-boundary}]
Denote the distributions of $\chi^2_\nu(0)$, $\chi^2_\nu(\underline{\Delta})$ and $\chi^2_\nu(\overline{\Delta})$ as $F_0$, $F_{\underline{a}}$, and $F_{\overline{a}}$ respectively.


Recall that thresholding procedures are of the form
$$
\widehat{S}_p = \left\{i\,|\,x(i) > t_p(x)\right\}.
$$
Denote $\widehat{S} := \left\{i\,|\,x(i) > t_p(x)\right\}$, and $\widehat{S}(u) := \left\{i\,|\,x(i) > u\right\}$.
For any threshold $u\ge t_p$ we must have $\widehat{S}(u)\subseteq\widehat{S}$, and hence
\begin{equation} \label{eq:approx-boundary-proof-FDP}
    \text{FDP} := \frac{|\widehat{S}\setminus{S}|}{|\widehat{S}|} \ge \frac{|\widehat{S}\setminus{S}|}{|\widehat{S}\cup{S}|} = \frac{|\widehat{S}\setminus{S}|}{|\widehat{S}\setminus{S}| + |S|} \ge
    \frac{|\widehat{S}(u)\setminus{S}|}{|\widehat{S}(u)\setminus{S}| + |S|}.
\end{equation}
On the other hand, for any threshold $u\le t_p$ we must have $\widehat{S}(u)\supseteq\widehat{S}$, and hence
\begin{equation} \label{eq:approx-boundary-proof-NDP}
    \text{NDP} := \frac{|{S}\setminus\widehat{S}|}{|{S}|} \ge 
    \frac{|{S}\setminus\widehat{S}(u)|}{|{S}|}.
\end{equation}
Since either $u\ge t_p$ or  $u\le t_p$ must take place, putting \eqref{eq:approx-boundary-proof-FDP} and \eqref{eq:approx-boundary-proof-NDP} together, we have
\begin{equation} \label{eq:approx-boundary-proof-converse-1}
    \text{FDP} + \text{NDP} 
    \ge \frac{|\widehat{S}(u)\setminus{S}|}{|\widehat{S}(u)\setminus{S}|+|{S}|} \wedge \frac{|{S}\setminus\widehat{S}(u)|}{|{S}|},
\end{equation}
for any $u$.
Therefore it suffices to show that for a suitable choice of $u$, the RHS of \eqref{eq:approx-boundary-proof-converse-1} converges to 1 in probability; the desired conclusion on FDR and FNR follows by the dominated convergence theorem.

Let $t^* = 2q\log{p}$ for some fixed $q$, we obtain an estimate of the tail probability
\begin{align}
    \overline{F_0}(t^*) 
    &= \P[\chi_\nu^2(0) > t^*] 
    = \frac{2^{-\nu/2}}{\Gamma(\nu/2)} \int_{2q\log{p}}^\infty x^{\nu/2-1}e^{-x/2} \mathrm{d}x \nonumber \\
    &\sim \frac{2^{-\nu/2}}{\Gamma(\nu/2)} 2\left(2q\log{p}\right)^{\nu/2-1}p^{-q}. \label{eq:approx-boundary-proof-null-tail-prob}
\end{align}
where $a_p\sim b_p$ is taken to mean $a_p/b_p\to 1$; this tail estimate was also obtained in \cite{donoho2004higher}.
Observe that $|\widehat{S}(t^*)\setminus{S}|$ has distribution $\text{Binom}(p-s, \overline{F_0}(t^*))$ where $s=|S|$, denote $X = X_p := {|\widehat{S}(t^*)\setminus{S}|}/{|S|}$, and we have 
$$
\mu := \E\left[X\right] = \frac{(p-s)\overline{F_0}(t^*)}{s},
\quad \text{and} \quad
\var\left(X\right) = \frac{(p-s)\overline{F_0}(t^*){F_0}(t^*)}{s^2} \le \mu/s.
$$
Therefore for any $M>0$, we have, by Chebyshev's inequality,
\begin{equation}
    \P\left[X < M\right] 
    \le \P\left[\left|X-\mu\right| > \mu - M\right]
    \le \frac{\mu/s}{(\mu-M)^2}
    = \frac{1/(\mu s)}{(1-M/\mu)^2}. \label{eq:approx-boundary-proof-converse-2}
\end{equation}
Now, from the expression of $\overline{F_0}(t^*)$ in \eqref{eq:approx-boundary-proof-null-tail-prob}, we obtain
$$
\mu = (p^\beta - 1)\overline{F_0}(t^*) \sim \frac{2^{1-\nu/2}}{\Gamma(\nu/2)} \left(2q\log{p}\right)^{\nu/2-1}p^{\beta-q}.
$$
Since $\overline{r}<\beta$, we can pick $q$ such that $\overline{r}<q<\beta$. 
In turn, we have $\mu \to\infty$, as $p\to\infty$.
Therefore the last expression in \eqref{eq:approx-boundary-proof-converse-2} converges to 0, and we conclude that $X\to\infty$ in probability, and hence
\begin{equation} \label{eq:approx-boundary-proof-converse-3}
\frac{|\widehat{S}(t^*)\setminus{S}|}{|\widehat{S}(t^*)\setminus{S}|+|{S}|} 
= \frac{X}{X+1} \to 1 \quad \text{in probability}.
\end{equation}

On the other hand, we show that with the same choice of $u = t^*$,
\begin{equation} \label{eq:approx-boundary-proof-converse-4}
    \frac{|{S}\setminus\widehat{S}(t^*)|}{|{S}|}\to 1 \quad \text{in probability}.
\end{equation}
By the stochastic monotonicity of chi-square distributions (Lemma \ref{lemma:stochastic-monotonicity}), the probability of missed detection for each signal is lower bounded by $\P[\chi^2_\nu(\lambda_i) \le t^*] \ge F_{\overline{a}}(t^*)$.
Therefore, $|{S}\setminus\widehat{S}(t^*)| \stackrel{\mathrm{d}}{\ge} \text{Binom}(s, {F_{\overline{a}}}(t^*))$, and it suffices to show that ${F_{\overline{a}}}(t^*)$ converges to 1.
This is indeed the case, since
\begin{align*}
    {F_{\overline{a}}}(t^*) 
    &= \P[Z_1^2 + \ldots + Z_\nu^2 + 2\sqrt{2\overline{r}\log{p}} Z_\nu + 2\overline{r}\log{p} \le 2q\log{p}] \\
    &\ge \P[Z_1^2 + \ldots + Z_\nu^2 \le (q-\overline{r})\log{p}, \; 2\sqrt{2\overline{r}\log{p}} Z_\nu \le (q-\overline{r})\log{p}],
\end{align*}
and both events in the last line have probability going to 1 as $p\to\infty$.
The necessary condition is shown.
\end{proof}

We now turn to the sufficient condition. 
That is, when $\underline{r} > \beta$, the Benjamini-Hochberg procedure with slowly vanishing FDR levels achieves asymptotic approximate support recovery.

The proof proceeds in two steps.
We first make the connection between power of the BH procedure and stochastic ordering of the alternatives.
It is formalized in the following lemma, which could be of independent interest.
This result, though natural, seems new. 

\begin{lemma}[Monotonicity of the BH procedure] \label{lemma:monotonicity-BH-procedure}
Compare Alternatives 1 and 2 where we have $p$ independent observations $x(i)$, $i\in\{1,\ldots,p\}$,
\begin{enumerate}
    \item[Alt.1] the $p-s$ coordinates in the null part have common distribution $F_0$, and the $s$ signals have alternative distributions $F^{i}_1$, $i\in S$, respectively.
    \item[Alt.2] the $p-s$ coordinates in the null part have common distribution $F_0$, and the $s$ signals have alternative distributions $F^{i}_2$, $i\in S$, respectively, where
    $$ F^{i}_2(t) \le F^{i}_1(t), \quad \text{for all} \;\; t\in\R, \; \text{and for all} \;\; i\in S.$$
\end{enumerate}
If we apply the BH procedure at the same nominal level of FDR $\alpha$, then the FNR under Alternative 2 is bounded above by the FNR under Alternative 1.
\end{lemma}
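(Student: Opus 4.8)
The plan is to reduce the inequality between the two FNR values to a pathwise comparison obtained from a coupling, and then to extract the comparison from a coordinatewise monotonicity property of the Benjamini--Hochberg (BH) rejection set.

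First I would construct both experiments on a common probability space. For the $p-s$ null coordinates, which carry the distribution $F_0$ under both alternatives, I take the observations to be literally identical. For each signal $i\in S$, I use the monotone (quantile) coupling: letting $U_i$, $i\in S$, be i.i.d.\ uniform on $(0,1)$, independent of the nulls, set $x_1(i)=(F_1^i)^{\leftarrow}(U_i)$ and $x_2(i)=(F_2^i)^{\leftarrow}(U_i)$. Because $F_2^i(t)\le F_1^i(t)$ for all $t$, the generalized inverses satisfy $(F_2^i)^{\leftarrow}(u)\ge (F_1^i)^{\leftarrow}(u)$ for every $u$, so $x_2(i)\ge x_1(i)$ almost surely. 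The marginal law of $x_1$ is then exactly that of Alternative 1 and the marginal law of $x_2$ that of Alternative 2, while the two realized vectors obey $x_1\le x_2$ coordinatewise, with equality on $S^c$.

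The heart of the argument is the claim that the BH rejection set is monotone in the coordinatewise order of the data: if $x\le x'$ coordinatewise then $\widehat{S}(x)\subseteq\widehat{S}(x')$. I would prove this on the p-value scale. Writing $P(i)=\overline{F_0}(x(i))$, the order-reversing map $x\mapsto P$ turns $x\le x'$ into $P\ge P'$ coordinatewise. On the almost-sure event that the p-values are distinct (valid since $F_0$ is continuous), with ordered values $P_{(1)}\le\cdots\le P_{(p)}$ and critical values $c_k=\alpha k/p$, the number of rejections is $R=\max\{k:P_{(k)}\le c_k\}$; using the maximality of $R$ together with the monotonicity of $k\mapsto c_k$, one checks that the realized rejection set equals $\{i:P(i)\le c_R\}$. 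Decreasing p-values decreases every order statistic, $P'_{(k)}\le P_{(k)}$, so $\{k:P_{(k)}\le c_k\}\subseteq\{k:P'_{(k)}\le c_k\}$ and hence $R'\ge R$; then any $i$ with $P(i)\le c_R$ satisfies $P'(i)\le P(i)\le c_R\le c_{R'}$ and is therefore rejected under $P'$ as well. This yields $\widehat{S}(P)\subseteq\widehat{S}(P')$, i.e.\ $\widehat{S}(x)\subseteq\widehat{S}(x')$.

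Applying this monotonicity to the coupled vectors gives $\widehat{S}_1\subseteq\widehat{S}_2$ almost surely, whence $S\setminus\widehat{S}_2\subseteq S\setminus\widehat{S}_1$ and $|S\setminus\widehat{S}_2|\le|S\setminus\widehat{S}_1|$ pointwise. Since $|S|=s$ is the same deterministic quantity in both experiments, taking expectations in the definition of FNR delivers $\mathrm{FNR}(\mathrm{Alt.2})\le\mathrm{FNR}(\mathrm{Alt.1})$, as claimed. I expect the main obstacle to be the monotonicity claim for the BH procedure, and within it the delicate point is verifying that the realized rejection set coincides with $\{i:P(i)\le c_R\}$ so that a genuine set inclusion (not merely $R'\ge R$) can be extracted; the only bookkeeping nuisance is the event of ties, which is harmless because $F_0$ is continuous.
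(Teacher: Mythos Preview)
Your proof is correct and follows essentially the same strategy as the paper: construct a coupling so that $x_1\le x_2$ coordinatewise, then establish that the BH rejection set is monotone in the data, giving $\widehat{S}_1\subseteq\widehat{S}_2$ almost surely and hence the FNR inequality. The only cosmetic difference is that the paper carries out the monotonicity step on the observation scale via the threshold $\tau=\inf\{t:\overline{F_0}(t)\le\alpha\widehat{G}(t)\}$ (showing $\tau_2\le\tau_1$ because $\widehat{G}_1\le\widehat{G}_2$), while you argue on the p-value scale through $R$ and the critical values $c_k$; these are equivalent formulations of the same inclusion.
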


Loosely put, the power of the BH procedure is monotone increasing with respect to the stochastic ordering of the alternatives.

\begin{proof}[Proof of Lemma \ref{lemma:monotonicity-BH-procedure}]
We first re-express the BH procedure in a different form.
Recall that on observing $x(i)$, $i\in\{1,\ldots,p\}$, the BH procedure is the thresholding procedure with threshold set at $x_{[i^*]}$, where $i^* := \max\{i\,|\,\overline{F_0}(x_{[i]})\le \alpha i/p\}$, and $x_{[1]}\ge\ldots\ge x_{[p]}$ are the order statistics.

Let $\widehat{G}$ denote the empirical survival function
\begin{equation} \label{eq:empirical-tail-distribution}
    \widehat{G}(t) = \frac{1}{p}\sum_{i\in[p]}\mathbbm{1}\{x(i) \ge t\}.
\end{equation}
By the definition, we know that $\widehat{G}(x_{[i]}) = i/p$.
Therefore, by the definition of $i^*$, we have
\begin{equation*} 
    \overline{F_0}(x_{[i]}) > \alpha\widehat{G}(x_{[i]}) = \alpha i/p \quad \text{for all }i>i^*.
\end{equation*}
Since $\widehat{G}$ is constant on $(x_{[i^*+1]}, x_{[i^*]}]$, the fact that 
$\overline{F_0}(x_{[i^*]}) \le \alpha\widehat{G}(x_{[i^*]})$ and $\overline{F_0}(x_{[i^*+1]}) > \alpha\widehat{G}(x_{[i^*+1]})$ implies that $\alpha\widehat{G}$ and $\overline{F_0}$ must ``intersect'' on the interval by continuity of $F_0$.
We denote this ``intersection'' as
\begin{equation} \label{eq:approx-boundary-proof-tau}
    \tau = \inf\{t\,|\,\overline{F_0}(t)\le\alpha\widehat{G}(t)\}. 
\end{equation}
Note that $\tau$ cannot be equal to $x_{[i^*+1]}$ since $\overline{F}_0$ is c\`adl\`ag.
Since there is no observation in $[\tau, x_{[i^*]})$, we can write the BH procedure as the thresholding procedure with threshold set at $\tau$.

Now, denote the observations under Alternatives 1 and 2 as $x_1(i)$ and $x_2(i)$.
Since $x_2(i)$ stochastically dominates $x_1(i)$ for all $i\in\{1,\ldots,p\}$, there exists a coupling $(\widetilde{x}_1, \widetilde{x}_2)$ of $x_1$ and $x_2$ such that 
$\widetilde{x}_1(i) \le \widetilde{x}_2(i)$ a.s. for all $i$.
We will replace $\widetilde{x}_1$ and $\widetilde{x}_2$ with $x_1$ and $x_2$ in what follows.
Since we will compare the FNR's, i.e., expectations with respect to the marginals of ${x}$'s in the last step, this replacement does not affect the conclusions.
To simplify notation, we still write $x_1$ and $x_2$ in place of $\widetilde{x}_1$ and $\widetilde{x}_2$.

Let $\widehat{G}_k$ be the empirical survival function under Alternative $k$, i.e.,
\begin{equation} \label{eq:empirical-survival}
    \widehat{G}_k(t) = \frac{1}{p}\sum_{i\in[p]}\mathbbm{1}\{x_k(i) \ge t\}, \quad k\in\{1,2\}.
\end{equation}
We define the BH thresholds $\tau_1$ and $\tau_2$ by replacing $\widehat{G}$ in \eqref{eq:approx-boundary-proof-tau} with $\widehat{G}_1$ and $\widehat{G}_2$, respectively.
Denote the set estimates of signal support $\widehat{S}_k = \{i\,|\,x_k(i)\ge\tau_k\}$ by the BH procedure.
We claim that
\begin{equation} \label{eq:monotonicity-BH-procedure-thresholds}
    \tau_2 \le \tau_1 \quad \text{with probability } 1.
\end{equation}

Indeed, by definition of the empirical survival function \eqref{eq:empirical-survival} and the fact that $x_1(i) \le x_2(i)$ almost surely for all $i$,  we have $\widehat{G}_1(t) \le \widehat{G}_2(t)$ for all $t$.
Hence, $\overline{F_0}(t)\le\alpha\widehat{G}_1(t)$ implies $\overline{F_0}(t)\le\alpha\widehat{G}_2(t)$, and Relation \eqref{eq:monotonicity-BH-procedure-thresholds} follows from the definition of $\tau$ in \eqref{eq:approx-boundary-proof-tau}.

Finally, when $\tau_2 \le \tau_1$, we have $\tau_2 \le \tau_1 \le x_1(i) \le x_2(i)$ with probability 1 for all $i\in\widehat{S}_1$.
Therefore, it follows that $\widehat{S}_1 \subseteq \widehat{S}_2$ and hence $|S\setminus\widehat{S}_2| \le |S\setminus\widehat{S}_1|$ almost surely. 
The conclusion in Lemma \ref{lemma:monotonicity-BH-procedure} follows from the last inequality.
\end{proof}

Lemma \ref{lemma:monotonicity-BH-procedure} allows us to reduce the analysis of alternatives with unequal signal sizes to alternatives with equal signal sizes. 
The structure for the rest of the proof for the sufficient condition follows that of Theorem 2 in \cite{arias2017distribution}. 

\begin{proof}[Proof of sufficient condition in Theorem \ref{thm:chi-squared-approx-boundary}]
The FDR vanishes by our choice of $\alpha$ and the FDR-controlling property of the BH procedure.
It only remains to show that FNR also vanishes.

To do so we compare the FNR under the alternative specified in Theorem \ref{thm:chi-squared-approx-boundary} to one with all of the signal sizes equal to $\underline{\Delta}$.
Let $x(i)$ be vectors of independent observations with $p-s$ nulls having $\chi^2_\nu(0)$ distributions, and $s$ signals having $\chi^2_\nu(\underline{\Delta})$ distributions.
By Lemma \ref{lemma:monotonicity-BH-procedure}, it suffices to show that the FNR under the BH procedure in this setting vanishes.

Let $\widehat{G}$ denote the empirical survival function as in \eqref{eq:empirical-tail-distribution}.
Define the empirical survival functions for the null part and signal part
\begin{equation} \label{eq:empirical-survival-null-signal}
    \widehat{W}_\text{null}(t) = \frac{1}{p-s}\sum_{i\not\in S}\mathbbm{1}\{x(i) \ge t\},
    \quad
    \widehat{W}_\text{signal}(t) = \frac{1}{s}\sum_{i\in S}\mathbbm{1}\{x(i) \ge t\},
\end{equation}
where $s=|S|$, so that
$$
\widehat{G}(t) = \frac{p-s}{p}\widehat{W}_\text{null}(t) + \frac{s}{p}\widehat{W}_\text{signal}(t).
$$

We need the following result to describe the deviations of the empirical distributions.
\begin{lemma}[Theorem 1 of \citet{eicker1979asymptotic}] \label{lemma:empirical-process}
Let $Z_1,\ldots,Z_k$ be iid with continuous survival function $Q$.
Let $\widehat{Q}_k$ denote their empirical survival function and define 
$\xi_k = \sqrt{2\log{\log{(k)}}/k}$ for $k \ge 3$. 
Then
$$
\frac{1}{\xi_k}\sup_z\frac{|\widehat{Q}_k(z) - Q(z)|}{\sqrt{Q(z)(1 - Q(z))}} \to 1,
$$
in probability as $k \to \infty$.
In particular,
$$
\widehat{Q}_k(z) = Q(z) + O_\P\left(\xi_k\sqrt{Q(z)(1 - Q(z))}\right),
$$
uniformly in z.
\end{lemma}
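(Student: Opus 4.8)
The statement is the classical self-normalized (studentized) empirical-process result of \citet{eicker1979asymptotic}, and the plan is to reduce it to the uniform empirical process and then bracket the normalized supremum between $1-\epsilon$ and $1+\epsilon$ with probability tending to one. First I would apply the probability integral transform: since $Q$ is continuous, the variables $V_i := Q(Z_i)$ are iid uniform on $(0,1)$, and because $Q$ is nonincreasing the event $\{Z_i \ge z\}$ becomes $\{V_i \le Q(z)\}$. Writing $t = Q(z) \in (0,1)$ and letting $G_k$ denote the empirical distribution function of the $V_i$, the deviation $\widehat{Q}_k(z) - Q(z)$ becomes $G_k(t) - t$, so it suffices to prove
\[
\frac{1}{\xi_k}\sup_{0<t<1}\frac{|G_k(t) - t|}{\sqrt{t(1-t)}} \xrightarrow{\P} 1, \qquad \xi_k = \sqrt{2\log\log k / k}.
\]
By the symmetry $t \leftrightarrow 1-t$ it is enough to treat $t \in (0,1/2]$, and one checks that for $t$ below the smallest order statistic the ratio is $O(\sqrt{t}) = o(1)$, so the supremum is in fact governed by an interior range $t \in [c/k,\,1/2]$.

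The heuristic that pins down the constant, and that organizes the whole argument, is a dyadic decomposition of $[c/k,\,1/2]$ into the $\asymp \log_2 k$ scales $I_j = (2^{-(j+1)}, 2^{-j}]$. On each scale the standardized increment $(G_k(t)-t)/\sqrt{t(1-t)/k}$ is approximately a single standard Gaussian fluctuation, and these fluctuations across distinct scales are \emph{asymptotically independent}; the supremum therefore behaves like the maximum of $m \asymp \log k$ independent standard normals, which is $\sqrt{2\log m} \sim \sqrt{2\log\log k}$. To convert this into the upper bound $\le (1+\epsilon)\sqrt{2\log\log k}$ I would use Bernstein/Bennett exponential inequalities for $G_k(t)-t$ at each point, a maximal inequality within each block (exploiting the monotonicity of $G_k$ to reduce the supremum over a block to its endpoint values plus a small slack, with $\sqrt{t(1-t)}$ essentially constant across the block), and a union bound over the $O(\log k)$ blocks; the union-bound factor $\log k$ is exactly what reappears as $\log\log k$ after taking logarithms in the Gaussian tail. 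For the lower bound $\ge (1-\epsilon)\sqrt{2\log\log k}$ I would instead select $\asymp \log k$ well-separated scales so that the corresponding increments are nearly independent, apply sharp tail \emph{lower} bounds at each scale, and conclude via a near-product estimate that with probability tending to one at least one increment exceeds the target threshold.

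The main obstacle is the regime of very small $t \asymp 1/k$, where $kG_k(t)$ is $\mathrm{Binomial}(k,t)$ with bounded mean and is therefore effectively \emph{Poisson} rather than Gaussian; there the naive Gaussian tail is invalid and one must substitute sharp Poisson tail asymptotics to verify that this regime contributes no more than $\sqrt{2\log\log k}$ (and, for the lower bound, at least that much). Obtaining the precise constant $1$ in the limit, as opposed to merely the correct order $\sqrt{\log\log k}$, forces one to match the Poisson-regime contributions of the upper and lower bounds \emph{exactly}, and this constant matching is the delicate heart of the result. A tempting shortcut is to invoke a strong (KMT-type) approximation of the uniform empirical process by a Brownian bridge and then appeal to the law of the iterated logarithm for $B(t)/\sqrt{t(1-t)}$ as $t \to 0$; I would caution, however, that the strong-approximation error is not uniformly negligible at the extreme scales $t \asymp 1/k$ where the supremum is actually attained, so the boundary analysis cannot be fully outsourced and the direct Poisson estimates remain indispensable. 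Since the statement requires only convergence in probability of the normalized supremum, and not the finer extreme-value (Gumbel) limit, the leading-order two-sided bracketing described above suffices and no further fluctuation analysis is needed.
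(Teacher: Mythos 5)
A point of order first: the paper never proves this lemma. It is stated with the attribution ``Theorem 1 of \citet{eicker1979asymptotic}'' and used as an imported black box in the proofs of Theorems \ref{thm:chi-squared-approx-boundary}, \ref{thm:chi-squared-exact-approx-boundary}, \ref{thm:chi-squared-approx-exact-boundary}, \ref{thm:additive-error-exact-approx-boundary} and \ref{thm:additive-error-approx-exact-boundary}. There is therefore no in-paper argument to compare yours against; the only meaningful benchmark is the classical literature (Eicker, Jaeschke), whose Darling--Erd\H{o}s-type strategy your outline does follow: probability integral transform, the Ornstein--Uhlenbeck-in-logarithmic-time picture with $\asymp\log k$ dyadic scales, exponential inequalities plus a union bound for the upper bound, and a second-moment argument over separated scales for the lower bound. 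As a strategy this is sound, and your warning that a KMT strong approximation cannot by itself dispose of the smallest scales is well taken.

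Two criticisms, one of completeness and one of substance. (i) What you wrote is a plan, not a proof: the blockwise maximal inequality, the quantified decorrelation across scales, and the second-moment computation are asserted rather than carried out, and these are exactly where the work lies. In particular, adjacent dyadic scales are \emph{not} asymptotically independent --- the standardized increments at $t$ and $t/2$ have correlation $\approx 2^{-1/2}$ --- so the lower bound requires scales separated by a diverging number of levels (which still leaves $(\log k)^{1-o(1)}$ of them, enough to recover the constant). (ii) Your account of the small-$t$ regime is wrong in a way that matters. The scales with $kt = O(\log\log k)$ number only $O(\log\log\log k)$, and a Poisson--Chernoff bound shows each exceeds $(1+\epsilon)\sqrt{2\log\log k}$ with probability at most $\exp\left(-c\sqrt{\log\log k}\,\log\log\log k\right)$, so after summing, this entire regime is negligible. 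Consequently it cannot supply the lower bound --- your parenthetical ``and, for the lower bound, at least that much'' is false --- and no ``exact matching of Poisson-regime contributions'' is needed or even possible. The constant $1$ is produced entirely by the Gaussian-regime scales, say $t\in[k^{-1/2},1/2]$: there the per-scale probability of exceeding $c\sqrt{2\log\log k}$ is $(\log k)^{-c^2(1+o(1))}$, and the expected number of exceedances over the $\asymp\log k$ scales diverges precisely when $c<1$ and vanishes when $c>1$. Relatedly, the supremum is not ``attained at $t\asymp 1/k$''; it is attained at scales of order $k^{-U}$ with a random exponent $U$ spread over $(0,1)$, which is why the KMT route fails only on a region of vanishing Ornstein--Uhlenbeck time yet still requires the direct boundary estimates you describe.
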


Apply Lemma \ref{lemma:empirical-process} to $\widehat{G}$, we obtain
$\widehat{G}(t) = G(t) + \widehat{R}(t)$.
where 
\begin{equation} \label{eq:empirical-process-mean}
    G(t) = \frac{p-s}{p}\overline{F_0}(t) + \frac{s}{p}\overline{F_a}(t),
\end{equation}
where $\overline{F_0}$ and $\overline{F_{a}}$ are the survival functions of $\chi_\nu^2(0)$ and $\chi_\nu^2(\underline{\Delta})$ respectively, and 
\begin{equation} \label{eq:empirical-process-residual}
    \widehat{R}(t) = O_\P\left(\xi_p\sqrt{\overline{F_0}(t)F_0(t)} + \frac{s}{p}\xi_s\sqrt{\overline{F_a}(t)F_a(t)}\right),
\end{equation}
uniformly in $t$.

Recall (see proof of Lemma \ref{lemma:monotonicity-BH-procedure}) that the BH procedure is the thresholding procedure with threshold set at $\tau$ (defined in \eqref{eq:approx-boundary-proof-tau}).
The NDP may also be re-written as 
$$
\text{NDP} = \frac{|{S}\setminus\widehat{S}|}{|{S}|} = \frac{1}{s}\sum_{i\in S}\mathbbm{1}\{x(i) < \tau\} = 1 - \widehat{W}_\text{signal}(\tau),
$$
so that it suffices to show that 
\begin{equation} \label{eq:approx-boundary-proof-sufficient-1}
    \widehat{W}_\text{signal}(\tau)\to 1
\end{equation} in probability.
Applying Lemma \ref{lemma:empirical-process} to $\widehat{W}_\text{signal}$, we know that 
$$
\widehat{W}_\text{signal}(\tau) = \overline{F_a}(\tau) + O_\P\left(\xi_s\sqrt{\overline{F_a}(\tau)F_a(\tau)}\right) = \overline{F_a}(\tau) + o_\P(1).
$$
So it suffices to show that $F_a(\tau)\to 0$ in probability.
Now let $t^* = 2q\log(p)$ for some $q$ such that $\beta<q<\underline{r}$.
We have 
\begin{align}
    F_a(t^*) 
    &= \P[\chi^2_\nu(\underline{\Delta}) \le t^*]
    \le \P\left[2\sqrt{\underline{\Delta}}Z_\nu \le t^* - \underline{\Delta}\right] \nonumber \\
    &= \P\left[Z_\nu \le \frac{t^*}{2\sqrt{\underline{\Delta}}} - \frac{\sqrt{\underline{\Delta}}}{2}\right] 
    = \P\left[Z_\nu \le \frac{q-\underline{r}}{2\sqrt{\underline{r}}}\sqrt{2\log{p}}\right] \to 0. \label{eq:approx-boundary-proof-sufficient-2}
\end{align} 
Hence in order to show \eqref{eq:approx-boundary-proof-sufficient-1}, it suffices to show 
\begin{equation} \label{eq:approx-boundary-proof-sufficient-3}
    \P\left[\tau \le t^*\right] \to 1.
\end{equation}
By \eqref{eq:empirical-process-mean}, the mean of the empirical process $\widehat{G}$ evaluated at $t^*$ is
\begin{equation} \label{eq:approx-boundary-proof-sufficient-4}
    G(t^*) = \frac{p-s}{p}\overline{F_0}(t^*) + \frac{s}{p}\overline{F_a}(t^*).
\end{equation}
The first term, using Relation \eqref{eq:approx-boundary-proof-null-tail-prob}, is asymptotic to $p^{-q}L(p)$, where $L(p)$ is the logarithmic term in $p$.
The second term, since $\overline{F_a}(t^*)\to 1$ by Relation \eqref{eq:approx-boundary-proof-sufficient-2}, is asymptotic to $p^{-\beta}$.
Therefore, $G(t^*) \sim p^{-q}L(p) + p^{-\beta} \sim p^{-\beta}$, since 
$p^{\beta-q}L(p)\to0$ where $q>\beta$.

The fluctuation of the empirical process at $t^*$, by Relation \eqref{eq:empirical-process-residual}, is 
\begin{align*}
    \widehat{R}(t^*) 
    &= O_\P\left(\xi_p\sqrt{\overline{F_0}(t^*)F_0(t^*)} + \frac{s}{p}\xi_s\sqrt{\overline{F_a}(t^*)F_a(t^*)}\right)\\
    &= O_\P\left(\xi_p\sqrt{\overline{F_0}(t^*)}\right) + o_\P\left(p^{-\beta}\right).
\end{align*}
By \eqref{eq:approx-boundary-proof-null-tail-prob} and the expression for $\xi_p$, the first term is $O_\P\left(p^{-(q+1)/2}L(p)\right)$ where $L(p)$ is a poly-logarithmic term in $p$.
Since $\beta<\min\{q,1\}$, we have $\beta<(q+1)/2$, and hence $\widehat{R}(t^*) = o_\P(p^{-\beta})$.

Putting the mean and the fluctuation of $\widehat{G}(t^*)$ together, we obtain
$$
\widehat{G}(t^*) = G(t^*) + \widehat{R}(t^*) \sim_\P G(t^*) \sim p^{-\beta},
$$
and therefore, together with \eqref{eq:approx-boundary-proof-null-tail-prob}, we have
$$
\overline{F_0}(t^*)/\widehat{G}(t^*) = p^{\beta-q}L(p)(1+o_{\P}(1)),
$$
which is eventually smaller than the FDR level $\alpha$ by the assumption \eqref{eq:slowly-vanishing-error} and the fact that $\beta<q$.
That is, 
$$
\P\left[\overline{F}_0(t^*) / \widehat{G}(t^*) < \alpha\right] \to 1.
$$
By definition of $\tau$ (recall \eqref{eq:approx-boundary-proof-tau}), this implies that $\tau \le t^*$ with probability tending to 1, and \eqref{eq:approx-boundary-proof-sufficient-3} is shown.
The proof for the sufficient condition is complete.
\end{proof}

\subsection{Proof of Theorems \ref{thm:chi-squared-exact-approx-boundary} and \ref{thm:chi-squared-approx-exact-boundary}}
\label{subsec:proof-chi-squared-mix-boundaries}

Proof of Theorem \ref{thm:chi-squared-exact-approx-boundary} uses ideas from both the proof of Theorem \ref{thm:chi-squared-exact-boundary} and that of Theorem \ref{thm:chi-squared-approx-boundary}, and is substantially shorter.

\begin{proof}[Proof of Theorem \ref{thm:chi-squared-exact-approx-boundary}]
We first show the sufficient condition.
Vanishing FWER is guaranteed by the properties of the procedures, as discussed in the proof  of Theorem \ref{thm:chi-squared-exact-boundary}, and we only need to show that FNR also goes to zero. 
As in the proof of Theorem \ref{thm:chi-squared-exact-approx-boundary}, it suffices to show that
\begin{equation} \label{eq:exact-approx-boundary-proof-sufficient-1}
    \text{NDP} = 1 - \widehat{W}_\text{signal}(t_p) \to 0,
\end{equation}
where $t_p$ is the threshold of Bonferroni's procedure.

Since $\underline{r}>\widetilde{g}(\beta)=1$, we can pick $q$ such that $1<q<\underline{r}$.
Let $t^* = 2q\log{p}$, we have $t_p<t_p^*$ for large $p$ as in the proof of Theorem \ref{thm:chi-squared-exact-boundary}.
Therefore for large $p$, we have
$$
\widehat{W}_\text{signal}(t_p) \ge \widehat{W}_\text{signal}(t^*) \ge \overline{F_a}(t^*) + o_\P(1),
$$
where the last inequality follows from the stochastic monotonicity of the chi-square family, and Lemma \ref{lemma:empirical-process}.
Indeed, $F_a(t^*)\to0$ by \eqref{eq:approx-boundary-proof-sufficient-2} and our choice of $q<\underline{r}$. 
The proof of the sufficient condition is complete.

Proof of the necessary condition follows a similar structure to that of Theorem \ref{thm:chi-squared-approx-boundary}.
That is, we show that $\mathrm{FWER} + \mathrm{FNR}$ has liminf at least 1 by working with the lower bound
\begin{equation} \label{eq:exact-approx-boundary-proof-necessary-1}
    \mathrm{FWER}(\mathcal{R}) + \mathrm{FNR}(\mathcal{R}) \ge \P\left[\max_{i\in S^c}x(i)>u\right] \wedge \E\left[\frac{|S\setminus \widehat{S}(u)|}{|S|}\right],
\end{equation}
which holds for any thresholding procedure $\mathcal{R}$ and for arbitrary $u\in\R$.
By the assumption that $\overline{r}<\widetilde{g}(\beta)=1$, we can pick $q$ such that $\overline{r}<q<1$ and let $u = t^*=2q\log{p}$.
By relative stability of chi-squared random variables (Lemma \ref{lemma:rapid-variation-chisq}), we have
\begin{equation} \label{eq:exact-approx-boundary-proof-necessary-2}
    \P\left[\frac{\max_{i\in S^c} x(i)}{2\log{p}} > \frac{t^*}{2\log{p}}\right] \to 1.
\end{equation}
where the first fraction in \eqref{eq:exact-approx-boundary-proof-necessary-2} converges to 1, while the second converges to $q<1$.
On the other hand, by our choice of $q>\overline{r}$, the second term in \eqref{eq:exact-approx-boundary-proof-necessary-1} also converges to 1 as in \eqref{eq:approx-boundary-proof-converse-4}.
This completes the proof of the necessary condition.
\end{proof}

\begin{proof}[Proof of Theorem \ref{thm:chi-squared-approx-exact-boundary}]
We first show the sufficient condition.
Since FDR control is guaranteed by the BH procedure, we only need to show that the FWNR also vanishes, that is,
\begin{equation} \label{eq:approx-exact-boundary-proof-sufficient-1}
    \P\left[\min_{i\in S}x(i) \ge \tau\right] \to 1,
\end{equation}
where $\tau$ is the threshold for the BH procedure.

By the assumption that $\underline{r}>\widetilde{h}(\beta)=(\sqrt{\beta}+\sqrt{1-\beta})^2$, we have $\sqrt{\underline{r}}-\sqrt{1-\beta}>\sqrt{\beta}$, so we can pick $q>0$, such that 
\begin{equation} \label{eq:approx-exact-boundary-proof-sufficient-2}
\sqrt{\underline{r}}-\sqrt{1-\beta}>\sqrt{q}>\sqrt{\beta}.
\end{equation}
Let $t^*=2q\log{p}$, we claim that 
\begin{equation} \label{eq:approx-exact-boundary-proof-sufficient-3}
\P\left[\tau\le t^*\right]\to 1.
\end{equation}
Indeed, by our choice of $q>\beta$, \eqref{eq:approx-exact-boundary-proof-sufficient-3} follows in the same way that \eqref{eq:approx-boundary-proof-sufficient-3} did.

With this $t^*$, we have
\begin{equation} \label{eq:approx-exact-boundary-proof-sufficient-4}
    \P\left[\min_{i\in S}x(i) \ge \tau\right] \ge 
    \P\left[\min_{i\in S}x(i) \ge t^*,\; t^* \ge \tau\right].
\end{equation}
However, by our choice of $\sqrt{q} < \sqrt{\underline{r}}-\sqrt{1-\beta}$, the probability of the first event on the right-hand side of \eqref{eq:approx-exact-boundary-proof-sufficient-4} also goes to 1 according to \eqref{eq:chi-square-sufficient-1} and \eqref{eq:chi-square-sufficient-2}.
Together with \eqref{eq:approx-exact-boundary-proof-sufficient-3}, this proves \eqref{eq:approx-exact-boundary-proof-sufficient-1}, and completes proof of the sufficient condition.

The necessary condition follows from the lower bound
\begin{equation} \label{eq:approx-exact-boundary-proof-necessary-1}
    \mathrm{FDR}(\mathcal{R}) + \mathrm{FWNR}(\mathcal{R}) \ge \E\left[\frac{|\widehat{S}(u)\setminus S|}{|\widehat{S}(u)\setminus S| + |S|}\right] \wedge 
    \P\left[\min_{i\in S}x(i)<u\right],
\end{equation}
which holds for any thresholding procedure $\mathcal{R}$ and for arbitrary $u\in\R$.

By the assumption that $\overline{r}<\widetilde{h}(\beta)=(\sqrt{\beta}+\sqrt{1-\beta})^2$, we can pick a constant $q>0$, such that 
\begin{equation} \label{eq:approx-exact-boundary-proof-necessary-2}
    \sqrt{\overline{r}} - \sqrt{1-\beta} < \sqrt{q} < \sqrt{\beta}.
\end{equation}
Let also $u = t^*=2q\log{p}$.
By our choice of $q < \beta$, we know from \eqref{eq:approx-boundary-proof-converse-3} that the first term on the right-hand-side of \eqref{eq:approx-exact-boundary-proof-necessary-1} converges to 1.
It remains to show that the second term in \eqref{eq:approx-exact-boundary-proof-necessary-1} also converges to 1.

For the second term in \eqref{eq:approx-exact-boundary-proof-necessary-1}, dividing through by $2\log{p}$, we obtain
\begin{equation} \label{eq:approx-exact-boundary-proof-necessary-3}
    \P\left[\min_{i\in S}x(i)<t^*\right] = \P\left[ \frac{m_{S}}{2\log{p}} < q \right].
\end{equation}
Similar to \eqref{eq:chi-square-necessary-3}, we have
\begin{equation} \label{eq:approx-exact-boundary-proof-necessary-4}
    \frac{m_{S}}{2\log{p}} 
    \stackrel{\mathrm{d}}{\le} \min_{i\in S}\frac{Z_1^2(i) + \ldots + Z_{\nu-1}^2(i)}{2\log{p}} + \frac{(Z_\nu(i) + \sqrt{\overline{\Delta}})^2}{2\log{p}}.
\end{equation}
Define $i^\dagger = i^\dagger_p$ to be the index minimizing the second term in \eqref{eq:approx-exact-boundary-proof-necessary-4}, i.e.,
\begin{equation}
    i^\dagger := \argmin_{i\in S} f_p\left(Z_\nu(i)\right),
\end{equation}
where $f_p(x):=(x+\sqrt{\overline{\Delta}})^2/(2\log{p})$. 

Since $\sqrt{q}>\sqrt{\overline{r}}-\sqrt{1-\beta}$ and $q>0$, we have
$\frac{\sqrt{\overline{r}}-\sqrt{q}}{\sqrt{1-\beta}}<1$.
Also, since
$$
    \frac{\sqrt{\overline{r}}+\sqrt{q}}{\sqrt{1-\beta}}>0,
    \quad\text{and}\quad
    \frac{\sqrt{\overline{r}}-\sqrt{q}}{\sqrt{1-\beta}} < \frac{\sqrt{\overline{r}}+\sqrt{q}}{\sqrt{1-\beta}},
$$
we can further pick a constant $\beta_0\in(0,1]$ such that
\begin{equation} \label{eq:approx-exact-boundary-proof-necessary-5}
    \frac{\sqrt{\overline{r}}-\sqrt{q}}{\sqrt{1-\beta}} 
    < \sqrt{\beta_0} < 
    \frac{\sqrt{\overline{r}}+\sqrt{q}}{\sqrt{1-\beta}}.
\end{equation}
Let $Z_{[1]}\le Z_{[2]}\le\ldots\le Z_{[s]}$ be the order statistics of 
$\{Z_\nu(i)\}_{i\in S}$ and define $k=\lfloor s^{1-\beta_0}\rfloor$.
Applying Lemma \ref{lemma:relative-stability-order-statistics} (stated below), we obtain
\begin{equation} \label{eq:approx-exact-boundary-proof-necessary-6}
    \frac{Z_{[k]}}{\sqrt{2\log{p}}}
    = \frac{Z_{[k]}}{\sqrt{2\log{s}}} \frac{\sqrt{2\log{s}}}{\sqrt{2\log{p}}} 
    \to -\sqrt{\beta_0(1-\beta)}
    \quad\text{in probability}.
\end{equation}
Since we know (by solving a quadratic inequality) that
\begin{equation} \label{eq:approx-exact-boundary-proof-necessary-7}
    f_p(x)<q \iff \frac{x}{\sqrt{2\log{p}}} \in \left(-(\sqrt{\overline{r}}+\sqrt{q}), -(\sqrt{\overline{r}}-\sqrt{q})\right),
\end{equation}
combining \eqref{eq:approx-exact-boundary-proof-necessary-5}, \eqref{eq:approx-exact-boundary-proof-necessary-6}, and 
\eqref{eq:approx-exact-boundary-proof-necessary-7}, it follows that
\begin{equation*}
    \P\left[ f_p\left(Z_\nu(i^\dagger)\right) < q \right] \ge \P\left[ f_p\left(Z_{[k]}\right) < q \right] \to 1.
\end{equation*}
Finally, using \eqref{eq:chi-square-necessary-9}, we conclude that 
$$
\P\left[\min_{i\in S}x(i)<t^*\right] = 
\P\left[\frac{m_{S}}{2\log{p}} < q\right] \ge
\P\left[o_\P(1) + f_p\left(Z_\nu(i^\dagger)\right) < q \right] \to 1.
$$
Therefore, the two terms on the right-hand-side of \eqref{eq:approx-exact-boundary-proof-necessary-1} both converge 1. 
This completes the proof of the necessary condition.
\end{proof}

It only remains to justify \eqref{eq:approx-exact-boundary-proof-necessary-6}.

\begin{lemma}[Relative stability of order statistics]
\label{lemma:relative-stability-order-statistics}
Let $Z_{[1]} \le \ldots \le Z_{[s]}$ be the order statistics of $s$ iid standard Gaussian random variables.
Let $\beta_0\in(0,1]$ and define $k=\lfloor s^{1-\beta_0}\rfloor$, then we have
\begin{equation}
    \frac{Z_{[k]}}{\sqrt{2\log{s}}} \to -\sqrt{\beta_0} \quad\text{in probability}.
\end{equation}
\end{lemma}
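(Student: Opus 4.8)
The plan is to reduce the statement to a counting argument about how many of the $Z_i$ fall below a deterministic threshold, and to locate that threshold via the standard Gaussian tail asymptotics. The key observation is that $Z_{[k]}$ is the $k$-th \emph{smallest} observation, so for every $t\in\R$ we have the exact equivalence $\{Z_{[k]}\le t\} = \{N(t)\ge k\}$, where $N(t):=\#\{i:Z_i\le t\}$ is distributed as $\mathrm{Binom}(s,\Phi(t))$. Consequently the entire problem is to understand $\E[N(t)]=s\Phi(t)$ at thresholds of the form $t=-c\sqrt{2\log s}$ with $c>0$, and to compare it against $k=\lfloor s^{1-\beta_0}\rfloor$.

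First I would record the relevant tail estimate. By Mills' ratio, $\Phi(-c\sqrt{2\log s}) = \P[Z\ge c\sqrt{2\log s}] \sim \big(c\sqrt{4\pi\log s}\big)^{-1}\,s^{-c^2}$ as $s\to\infty$, so that $\E[N(-c\sqrt{2\log s})] \sim \big(c\sqrt{4\pi\log s}\big)^{-1}\,s^{\,1-c^2}$. Comparing the polynomial order $s^{1-c^2}$ with $k\asymp s^{1-\beta_0}$, the decisive quantity is the sign of $\beta_0-c^2$: when $c<\sqrt{\beta_0}$ we have $\E[N]/k\to\infty$, while when $c>\sqrt{\beta_0}$ we have $\E[N]/k\to0$. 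This is exactly what pins the normalized order statistic at $-\sqrt{\beta_0}$.

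The proof then splits into two one-sided bounds. Fix a small $\epsilon\in(0,\sqrt{\beta_0})$. For the \emph{upper} deviation, take $t^+=-(\sqrt{\beta_0}-\epsilon)\sqrt{2\log s}$, so $c=\sqrt{\beta_0}-\epsilon$ and $\E[N(t^+)]\to\infty$ strictly faster than $k$; since eventually $\E[N(t^+)]\ge 2k$, Chebyshev's inequality together with the bound $\var(N)\le\E[N]$ gives $\P[Z_{[k]}>t^+]=\P[N(t^+)<k]\to0$. For the \emph{lower} deviation, take $t^-=-(\sqrt{\beta_0}+\epsilon)\sqrt{2\log s}$, so $c=\sqrt{\beta_0}+\epsilon$ and $\E[N(t^-)]=o(k)$; Markov's inequality gives $\P[Z_{[k]}\le t^-]=\P[N(t^-)\ge k]\le \E[N(t^-)]/k\to0$. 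Combining the two bounds yields $\P\big[\,|Z_{[k]}/\sqrt{2\log s}+\sqrt{\beta_0}|>\epsilon\,\big]\to0$ for every small $\epsilon$, which is the claimed convergence in probability.

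The main point requiring care is the asymptotic bookkeeping in the second step: one must check that the polylogarithmic prefactor $(\log s)^{-1/2}$ in $\E[N]$ cannot overturn the polynomial comparison $s^{\beta_0-c^2}$, which it cannot, since a fixed positive power of $s$ dominates any power of $\log s$. A minor edge case is $\beta_0=1$, where $k$ is bounded (indeed $k=1$), but both the Markov and Chebyshev estimates remain valid there because they only use $k\ge1$ and the corresponding divergence or vanishing of $\E[N]$. No appeal to symmetry or to intermediate-order-statistic limit theorems is needed; the binomial tail comparison suffices.
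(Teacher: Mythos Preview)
Your proof is correct and takes a genuinely different route from the paper. The paper applies the quantile transform $Z_{[k]}=\Phi^{\leftarrow}(U_{[k]})$ and works with the exact $\mathrm{Beta}(k,s+1-k)$ distribution of the uniform order statistic $U_{[k]}$, concentrating it around $s^{-\beta_0}$ via Chebyshev before pushing through the Gaussian quantile asymptotics. You instead fix deterministic thresholds $t^{\pm}$ and study the binomial count $N(t)=\#\{i:Z_i\le t\}$, using Markov's inequality for the lower deviation and Chebyshev for the upper. The two arguments are dual---the paper conditions on the rank and studies the value, while you condition on the value and study the rank---but yours is arguably more elementary in that it avoids the Beta distribution entirely and relies only on $\var(N)\le\E[N]$ together with Mills' ratio. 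The paper's route, on the other hand, makes the connection to the uniform order statistic explicit, which can be convenient if one wishes to sharpen the result to a distributional limit rather than mere consistency.
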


\begin{proof}[Proof of Lemma \ref{lemma:relative-stability-order-statistics}]
Using the Renyi representation for order statistics, we write
\begin{equation} \label{eq:relative-stability-order-statistics-proof-1}
    Z_{[i]} = \Phi^{\leftarrow}(U_{[i]}),
\end{equation}
where $U_{[i]}$ is the $i^\mathrm{th}$ (smallest) order statistic of $s$ independent uniform random variables over $(0,1)$.
Since $U_{[i]}$ has a $\mathrm{Beta}(i, s+1-i)$ distribution, with mean and standard deviation,
$$
\E[U_{[k]}] = k/(s+1) \sim s^{-\beta_0}, 
\quad \text{and} \quad
{\mathrm{sd}(U_{[k]})} = \frac{1}{s+1}\sqrt{\frac{k(s+1-k)}{s+2}} \sim s^{-\frac{1+\beta_0}{2}},
$$
we obtain by Chebyshev's inequality 
$$
\P\left[s^{-\beta_0}(1-\epsilon) < U_{[k]} < s^{-\beta_0}(1+\epsilon)\right] \to 1,
$$
where $\epsilon$ is an arbitrary positive constant.
This implies, by representation \eqref{eq:relative-stability-order-statistics-proof-1},
\begin{equation} \label{eq:relative-stability-order-statistics-proof-2}
    \P\left[\Phi^{\leftarrow}\left(s^{-\beta_0}(1-\epsilon)\right) < Z_{[k]} < \Phi^{\leftarrow}\left(s^{-\beta_0}(1+\epsilon)\right)\right] \to 1.
\end{equation}
Using the expression for standard Gaussian quantiles (see, e.g., Proposition 1.1. in \cite{gao2020fundamental}), we know that
\begin{align*}
    \Phi^{\leftarrow}\left(s^{-\beta_0}(1-\epsilon)\right) 
    &\sim -\sqrt{2\log{\left(s^{\beta_0}/(1-\epsilon)\right)}} \\
    &= -\sqrt{2(\beta_0\log{s} - \log{(1-\epsilon)})} \sim -\sqrt{2\beta_0\log{s}},
\end{align*}
and similarly $\Phi^{\leftarrow}\left(s^{-\beta_0}(1+\epsilon)\right)\sim -\sqrt{2\beta_0\log{s}}$.
Since both ends of the interval in \eqref{eq:relative-stability-order-statistics-proof-2} are asymptotic to $-\sqrt{2\beta_0\log{s}}$, 
the desired conclusion follows.
\end{proof}

\subsection{Proof of Theorems \ref{thm:additive-error-exact-approx-boundary} and \ref{thm:additive-error-approx-exact-boundary}}
\label{subsec:proof-additive-error-mix-boundaries}

Proof of Theorem \ref{thm:additive-error-exact-approx-boundary} is analogous to that of Theorem \ref{thm:chi-squared-exact-approx-boundary}.

\begin{proof}[Proof of Theorem \ref{thm:additive-error-exact-approx-boundary}]
We first show the sufficient condition.
Similar to the proof of Theorem Theorem \ref{thm:chi-squared-exact-approx-boundary}, it suffices to show that
\begin{equation} \label{eq:additive-error-exact-approx-boundary-proof-sufficient-1}
    \text{NDP} = 1 - \widehat{W}_\text{signal}(t_p) \to 0,
\end{equation}
where $t_p$ is the threshold of Bonferroni's procedure.

By the expression for normal quantiles, we know that 
$$
t_p=F^\leftarrow(1-\alpha/p)\sim(2\log{p}-2\log{\alpha})^{1/2} \sim(2\log{p})^{1/2}.
$$
where the last asymptotic equivalence follows from our choice of $\alpha$ and \eqref{eq:chi-square-sufficient-0}.
Since $\underline{r}>\widetilde{g}(\beta)=1$, we can pick $q$ such that $1<q<\underline{r}$.
Let $t^* = \sqrt{2q\log{p}}$, we know that $t_p<t_p^*$ for large $p$.
Therefore for large $p$, we have
$$
\widehat{W}_\text{signal}(t_p) \ge \widehat{W}_\text{signal}(t^*) \ge \overline{F_a}(t^*) + o_\P(1),
$$
where $\overline{F_a}$ is the survival function of $\mathrm{N}(\sqrt{2\underline{r}\log{p}}, 1)$; the last inequality follows from the stochastic monotonicity of the Gaussian location family, and Lemma \ref{lemma:empirical-process}.
Indeed, by our choice of $q<\underline{r}$, we obtain
$$
F_a(t^*) = \Phi\left(\sqrt{2(q-\underline{r})\log{p}}\right)\to0,
$$
and \eqref{eq:additive-error-exact-approx-boundary-proof-sufficient-1} is shown. 
This completes the proof of the sufficient condition.

The proof of the necessary condition, again, follows similar structure as in the proof of Theorem \ref{thm:chi-squared-approx-boundary}, and uses the lower bound
\begin{equation} \label{eq:additive-error-exact-approx-boundary-proof-necessary-1}
    \mathrm{FWER}(\mathcal{R}) + \mathrm{FNR}(\mathcal{R}) \ge \P\left[\max_{i\in S^c}x(i)>u\right] \wedge \E\left[\frac{|S\setminus \widehat{S}(u)|}{|S|}\right],
\end{equation}
which holds for any arbitrary thresholding procedure $\mathcal{R}$ and arbitrary real $u\in\R$.
By the assumption that $\overline{r}<\widetilde{g}(\beta)=1$, we can pick $q$ such that $\overline{r}<q<1$ and let $u = t^*=\sqrt{2q\log{p}}$ in \eqref{eq:additive-error-exact-approx-boundary-proof-necessary-1}.
By relative stability of iid Gaussian random variables (see, e.g., \cite{gao2020fundamental}), we have
\begin{equation} \label{eq:additive-error-exact-approx-boundary-proof-necessary-2}
    \P\left[\frac{\max_{i\in S^c} x(i)}{\sqrt{2\log{p}}} > \frac{t^*}{\sqrt{2\log{p}}}\right] \to 1.
\end{equation}
since the first fraction in \eqref{eq:additive-error-exact-approx-boundary-proof-necessary-2} converges to 1, while the second converges to $q<1$.
Therefore, the first term on the right-hand side of \eqref{eq:additive-error-exact-approx-boundary-proof-necessary-1} converges to 1.

On the other hand, by the stochastic monotonicity of Gaussian location family, the probability of missed detection for each signal is lower bounded by $\P[Z+\mu(i) \le t^*] \ge F_{\overline{a}}(t^*)$, where $Z$ is a standard Gaussian r.v., and $F_{\overline{a}}$ is the cdf of $\mathrm{N}(\sqrt{2\overline{r}\log{p}}, 1)$.
Therefore, $|{S}\setminus\widehat{S}(t^*)| \stackrel{\mathrm{d}}{\ge} \text{Binom}(s, {F_{\overline{a}}}(t^*))$, and it suffices to show that ${F_{\overline{a}}}(t^*)$ converges to 1.
Indeed,
\begin{equation*}
    {F_{\overline{a}}}(t^*) = \Phi(\sqrt{2(q-\overline{r})\log{p}}) \to 1,
\end{equation*}
by our choice of $q>\overline{r}$.
Hence both quantities in the minimum on the right-hand side of \eqref{eq:additive-error-exact-approx-boundary-proof-necessary-1} converge to 1 in the limit, and the necessary condition is shown.
\end{proof}

\begin{proof}[Proof of Theorem \ref{thm:additive-error-approx-exact-boundary}]
We first show the sufficient condition.
As in the proof of Theorem \ref{thm:chi-squared-approx-exact-boundary}, we only need to show that with a specific choice of $t^*=\sqrt{2q\log{p}}$ where
\begin{equation} \label{eq:additive-error-approx-exact-boundary-proof-sufficient-1}
\sqrt{\underline{r}}-\sqrt{1-\beta}>\sqrt{q}>\sqrt{\beta},
\end{equation}
we have both
\begin{equation} \label{additive-error-eq:approx-exact-boundary-proof-sufficient-2}
\P\left[\tau\le t^*\right]\to 1,
\end{equation}
and 
\begin{equation} \label{eq:additive-error-approx-exact-boundary-proof-sufficient-3}
    \P\left[\min_{i\in S}x(i) \ge t^* \right] \to 1,
\end{equation}
so that 
\begin{equation*} 
    \P\left[\min_{i\in S}x(i) \ge \tau\right] \ge 
    \P\left[\min_{i\in S}x(i) \ge t^*,\; t^* \ge \tau\right] \to 1.
\end{equation*}

Relation \eqref{additive-error-eq:approx-exact-boundary-proof-sufficient-2} follows in very much the same way \eqref{eq:approx-exact-boundary-proof-sufficient-3} did on page \pageref{eq:approx-exact-boundary-proof-sufficient-3}, the only difference being how the tail estimates for \eqref{eq:approx-boundary-proof-sufficient-4} is derived.
In the Gaussian model, we have
\begin{align*}
    G(t^*) 
    &= \frac{p-s}{p}\overline{\Phi}(t^*) + \frac{s}{p}\overline{\Phi}(t^*-\sqrt{2\underline{r}\log{p}}) \\
    &\sim \frac{\phi(t^*)}{t^*} + p^{-\beta} 
    \sim L(p)p^{-q} + p^{-\beta} \sim p^{\beta},
\end{align*}
where the last asymptotic equivalence follows from our choice of $q>\beta$.

Relation \eqref{eq:additive-error-approx-exact-boundary-proof-sufficient-3} follows since 
$$
\frac{\min_{i\in S}x(i)}{\sqrt{2\log{p}}} 
= \frac{\min_{i\in S}x(i)}{\sqrt{2\log{p}}} 
\to -\sqrt{1-\beta} + \sqrt{\underline{r}},
$$
by relative stability of iid Gaussians. On the other hand, ${t^*}/{\sqrt{2\log{p}}}=\sqrt{q}<\sqrt{\underline{r}}-\sqrt{1-\beta}$, by our choice of ${q}$.

The necessary condition, again, follows from the lower bound
\begin{equation} \label{eq:additive-error-approx-exact-boundary-proof-necessary-1}
    \mathrm{FDR}(\mathcal{R}) + \mathrm{FWNR}(\mathcal{R}) \ge \E\left[\frac{|\widehat{S}(u)\setminus S|}{|\widehat{S}(u)\setminus S| + |S|}\right] \wedge 
    \P\left[\min_{i\in S}x(i)<u\right],
\end{equation}
which holds for any thresholding procedure $\mathcal{R}$ and for arbitrary $u\in\R$.
In particular, we show that both terms in the minimum in \eqref{eq:additive-error-approx-exact-boundary-proof-necessary-1} converge to 1 when we set $u=t^*=\sqrt{2q\log{p}}$ where 
\begin{equation}
\sqrt{\overline{r}}-\sqrt{1-\beta}<\sqrt{q}<\sqrt{\beta}.
\end{equation}

On the one hand, we have,
$$
\frac{\min_{i\in S}x(i)}{\sqrt{2\log{p}}} 
\stackrel{\mathrm{d}}{\le} \frac{\min_{i\in S}\epsilon(i)+\sqrt{2\overline{r}\log{p}}}{\sqrt{2\log{p}}} 
\to \sqrt{\overline{r}}-\sqrt{1-\beta},
$$
by relative stability of iid Gaussians. On the other hand, ${t^*}/{\sqrt{2\log{p}}}=\sqrt{q}>\sqrt{\underline{r}}-\sqrt{1-\beta}$ by our choice of ${q}$;
this shows that the second term on the right-hand side of \eqref{eq:additive-error-approx-exact-boundary-proof-necessary-1} converges to 1.

Observe that $|\widehat{S}(t^*)\setminus{S}|$ has distribution $\text{Binom}(p-s, \overline{\Phi}(t^*))$, and define $X = X_p := {|\widehat{S}(t^*)\setminus{S}|}/{|S|}$, we obtain,
\begin{align*}
    \mu &:= \E[X] = (p^\beta-1)\overline{\Phi}(t^*) 
    \sim (p^\beta-1)\frac{\phi(t^*)}{t^*} \\
    &\sim \frac{1}{\sqrt{2\pi}}\left(2q\log{p}\right)^{-1/2}p^{\beta-q}\to\infty,
\end{align*}
where the divergence follows from our choice of $q<\beta$.
Using again Relations \eqref{eq:approx-boundary-proof-converse-2} and \eqref{eq:approx-boundary-proof-converse-3}, we conclude that the first term on the right-hand side of \eqref{eq:additive-error-approx-exact-boundary-proof-necessary-1} also converges to 1.
This completes the proof of the necessary condition.
\end{proof}

\subsection{Signal sizes and odds ratios in 2-by-2 contingency tables}
\label{subsec:proof-signal-size-odds-ratio}

\begin{proof}[Proof of Proposition \ref{prop:signal-size-odds-ratio} and Corollary \ref{cor:signal-limits-OR}]
We parametrize the 2-by-2 multinomial distribution with the parameter $\delta$, 
\begin{equation} \label{eq:reparametrize-2-by-2-table-1}
    \mu_{11} = \phi_1\theta_1+\delta,\quad 
    \mu_{12} = \phi_1\theta_2-\delta,\quad 
    \mu_{21} = \phi_2\theta_1-\delta,\quad 
    \mu_{22} = \phi_2\theta_2+\delta.
\end{equation}
By relabelling of categories, we may assume $0<\theta_1,\phi_1\le1/2$ without loss of generality.
Note that $\delta$ must lie within the range $[\delta_\mathrm{min}, \delta_\mathrm{max}]$, where
$$
\delta_\mathrm{min} := \max\{-\phi_1\theta_1, -\phi_2\theta_2, \phi_1\theta_2-1, \phi_2\theta_1-1\} 
= -\phi_1\theta_1,
$$
and
$$
\delta_\mathrm{max} := \min\{1-\phi_1\theta_1, 1-\phi_2\theta_2, \phi_1\theta_2, \phi_2\theta_1\}
= \min\{\phi_1\theta_2, \phi_2\theta_1\},
$$
in order for $\mu_{ij}\ge0$ for all $i,j\in \{1,2\}$.
Under this parametrization, Relation \eqref{eq:odds-ratio} then becomes
\begin{equation} \label{eq:odds-ratio-delta}
    \text{R} = \frac{\mu_{11}\mu_{22}}{\mu_{12}\mu_{21}}
    = \frac{\phi_1\theta_1\phi_2\theta_2 + \delta(\phi_1\theta_1+\phi_2\theta_2)+\delta^2}{\phi_1\theta_1\phi_2\theta_2 - \delta(\phi_1\theta_2+\phi_2\theta_1)+\delta^2},
\end{equation}
which is one-to-one and increasing in $\delta$ on $(\delta_\mathrm{min}, \delta_\mathrm{max})$.
Equation \eqref{eq:signal-size-chisq} becomes
\begin{equation} \label{eq:signal-size-chisq-delta}
w^2 = \sum_{i=1}^2 \sum_{j=1}^2 \frac{(\mu_{ij} - \phi_i\theta_j)^2}{\phi_i\theta_j}
= \delta^2\sum_i\sum_j \frac{1}{\phi_i\theta_j}
= \frac{\delta^2}{\phi_1\theta_1\phi_2\theta_2},
\end{equation}
Solving for $\delta$ in \eqref{eq:odds-ratio-delta}, and plugging into the expression for signal size \eqref{eq:signal-size-chisq-delta} yields Relation \eqref{eq:signal-size-odds-ratio}.
Corollary \ref{cor:signal-limits-OR} follows from the fact that $w^2(\delta)$ is decreasing on $[\delta_\mathrm{min},0)$, increasing on $(0,\delta_\mathrm{max}]$, with limits
$$
\lim_{d\to \delta_\mathrm{min}} w^2(\delta) = \frac{\phi_1\theta_1}{\phi_2\theta_2},
\quad
\text{and}
\quad
\lim_{d\to \delta_\mathrm{max}} w^2(\delta) = \min\left\{\frac{\phi_1\theta_2}{\phi_2\theta_1}, \frac{\phi_2\theta_1}{\phi_1\theta_2}\right\}.
$$
The other three cases ($1/2\le\theta_1,\phi_1\le1$; $0<\theta_1\le1/2\le\phi_1\le1$; and $0\le\phi_1\le1/2\le\theta_1\le1$) may be obtained similarly, or by appealing to the symmetry of the problem.
\end{proof}

\begin{proof}[Proof of Corollary \ref{cor:optimal-design}]
Using the parametrization in \eqref{eq:reparametrize-2-by-2-table-1} and in Corollary \ref{cor:signal-size-odds-ratio-conditional-frequency}, we solve for $\delta$ in \eqref{eq:odds-ratio-delta} to obtain
\begin{align}
    \delta &= \frac{\phi_1 fR}{fR+1-f} - \left(\frac{\phi_1 fR}{fR+1-f} + f(1-\phi_1)\right)\phi_1 \nonumber \\
    &= \frac{f(1-f)\phi_1(1-\phi_1)(R-1)}{fR+1-f}. \label{eq:reparametrize-2-by-2-table-2}
\end{align}
Substituting \eqref{eq:reparametrize-2-by-2-table-2} into the expression \eqref{eq:signal-size-chisq-delta}, after some simplification, yields
\begin{equation} \label{eq:reparametrize-2-by-2-table-3}
    w^2 = \frac{f(1-f)\phi_1(1-\phi_1)(R-1)^2}{\left[\phi_1 R + (1-\phi_1)D\right]\left[\phi_1 + (1-\phi_1)D\right]},
\end{equation}
where $D = fR+1-f > 0$.
Therefore, he derivative of \eqref{eq:reparametrize-2-by-2-table-3} with respect to $\phi_1$ is
\begin{equation} \label{eq:signal-size-first-derivative}
    \frac{\mathrm{d}w^2}{\mathrm{d}\phi_1} = 
    \frac{f(1-f)(R-1)^2}{\left[\phi_1 R+(1-\phi_1)D\right]^2 \left[\phi_1+(1-\phi_1)D\right]^2} \left[(D^2-R)\phi_1^2 - 2D^2\phi_1 + D^2\right].
\end{equation}
Further, we obtain the second derivative with respect to $\phi_1$,
\begin{equation} \label{eq:signal-size-second-derivative}
    \frac{\mathrm{d}^2w^2}{\mathrm{d}\phi_1^2} = 
    h(R,f) \left[(\phi_1-1)D^2 - \phi_1R\right],
\end{equation}
where $h$ is some function of $(R,f)$ taking on strictly positive values.

Since $\left[(\phi_1-1)D^2 - \phi_1R\right]<0$, the second derivative \eqref{eq:signal-size-second-derivative} must be strictly negative on $[0,1]$.
This implies that the first derivative \eqref{eq:signal-size-first-derivative} is strictly decreasing on $[0,1]$. 
Since the first derivative \eqref{eq:signal-size-first-derivative} is strictly positive at $\phi_1=0$, and strictly negative at $\phi_1=1$, it must have a unique zero between 0 and 1, and hence, the solution to $(D^2-R)\phi_1^2 - 2D^2\phi_1 + D^2 = 0$ in the interval of $[0,1]$ must be the maximizer of \eqref{eq:reparametrize-2-by-2-table-3} --- when $D^2-R>0$, the smaller of the two roots maximizes \eqref{eq:reparametrize-2-by-2-table-3}, and when $D^2-R<0$, it is the larger of the two.
They share the same expression ${D}/{(D+\sqrt{R})}$, which coincides with \eqref{eq:optimal-design}.
Finally, when $D^2=R$, the only root $\phi_1^*=1/2$, which also coincides with \eqref{eq:optimal-design}, is the maximizer of \eqref{eq:reparametrize-2-by-2-table-3}.
\end{proof}

\section*{Acknowledgements}
I thank Professor Stilian Stoev for encouraging me to pursue the current project, and for his most generous support.
I also thank Jingshen Wang and Yuanzhi Li for their helpful comments and thoughtful feedback, and Rafail Kartsioukas and Jinqi Shen for their careful proof-reading of the manuscript.
This research is supported by NSF Grant DMS-1830293, Algorithms for Threat Detection.
\bibliographystyle{plainnat}
\bibliography{references}

\begin{thebibliography}{32}
\providecommand{\natexlab}[1]{#1}
\providecommand{\url}[1]{\texttt{#1}}
\expandafter\ifx\csname urlstyle\endcsname\relax
  \providecommand{\doi}[1]{doi: #1}\else
  \providecommand{\doi}{doi: \begingroup \urlstyle{rm}\Url}\fi

\bibitem[Agresti(2018)]{agresti2018introduction}
Alan Agresti.
\newblock \emph{An introduction to categorical data analysis}.
\newblock Wiley, 2018.

\bibitem[Arias-Castro and Chen(2017)]{arias2017distribution}
Ery Arias-Castro and Shiyun Chen.
\newblock Distribution-free multiple testing.
\newblock \emph{Electronic Journal of Statistics}, 11\penalty0 (1):\penalty0
  1983--2001, 2017.

\bibitem[Barber and Cand{\`e}s(2015)]{barber2015controlling}
Rina~Foygel Barber and Emmanuel~J Cand{\`e}s.
\newblock Controlling the false discovery rate via knockoffs.
\newblock \emph{The Annals of Statistics}, 43\penalty0 (5):\penalty0
  2055--2085, 2015.

\bibitem[Benjamini and Hochberg(1995)]{benjamini1995controlling}
Yoav Benjamini and Yosef Hochberg.
\newblock Controlling the false discovery rate: a practical and powerful
  approach to multiple testing.
\newblock \emph{Journal of the royal statistical society. Series B
  (Methodological)}, pages 289--300, 1995.

\bibitem[Bogdan et~al.(2011)Bogdan, Chakrabarti, Frommlet, and
  Ghosh]{bogdan2011asymptotic}
Ma{\l}gorzata Bogdan, Arijit Chakrabarti, Florian Frommlet, and Jayanta~K
  Ghosh.
\newblock Asymptotic bayes-optimality under sparsity of some multiple testing
  procedures.
\newblock \emph{The Annals of Statistics}, 39\penalty0 (3):\penalty0
  1551--1579, 2011.

\bibitem[Bush and Moore(2012)]{bush2012genome}
William~S Bush and Jason~H Moore.
\newblock Genome-wide association studies.
\newblock \emph{PLoS computational biology}, 8\penalty0 (12):\penalty0
  e1002822, 2012.

\bibitem[Butucea et~al.(2018)Butucea, Ndaoud, Stepanova, and
  Tsybakov]{butucea2018variable}
Cristina Butucea, Mohamed Ndaoud, Natalia~A. Stepanova, and Alexandre~B.
  Tsybakov.
\newblock Variable selection with {H}amming loss.
\newblock \emph{The Annals of Statistics}, 46\penalty0 (5):\penalty0
  1837--1875, 2018.

\bibitem[Chen et~al.(2018)Chen, Ying, and Arias-Castro]{chen2018scan}
Shiyun Chen, Andrew Ying, and Ery Arias-Castro.
\newblock A scan procedure for multiple testing.
\newblock \emph{arXiv preprint arXiv:1808.00631}, 2018.

\bibitem[Donoho and Jin(2004)]{donoho2004higher}
David Donoho and Jiashun Jin.
\newblock Higher {C}riticism for detecting sparse heterogeneous mixtures.
\newblock \emph{The Annals of Statistics}, 32\penalty0 (3):\penalty0 962--994,
  2004.

\bibitem[Dunn(1961)]{dunn1961multiple}
Olive~Jean Dunn.
\newblock Multiple comparisons among means.
\newblock \emph{Journal of the American statistical association}, 56\penalty0
  (293):\penalty0 52--64, 1961.

\bibitem[Efron(2004)]{efron2004large}
Bradley Efron.
\newblock Large-scale simultaneous hypothesis testing: the choice of a null
  hypothesis.
\newblock \emph{Journal of the American Statistical Association}, 99\penalty0
  (465):\penalty0 96--104, 2004.

\bibitem[Eicker(1979)]{eicker1979asymptotic}
F~Eicker.
\newblock The asymptotic distribution of the suprema of the standardized
  empirical processes.
\newblock \emph{The Annals of Statistics}, pages 116--138, 1979.

\bibitem[Ferguson(2017)]{ferguson2017course}
Thomas~S Ferguson.
\newblock \emph{A course in large sample theory}.
\newblock Routledge, 2017.

\bibitem[Gao and Stoev(2020)]{gao2020fundamental}
Zheng Gao and Stilian Stoev.
\newblock Fundamental limits of exact support recovery in high dimensions.
\newblock \emph{Bernoulli}, 26\penalty0 (4):\penalty0 2605--2638, 2020.

\bibitem[Gao et~al.(2019)Gao, Terhorst, Van~Hout, and Stoev]{gao2019upass}
Zheng Gao, Jonathan Terhorst, Cristopher~V Van~Hout, and Stilian Stoev.
\newblock {U-PASS: unified power analysis and forensics for qualitative traits
  in genetic association studies}.
\newblock \emph{Bioinformatics}, 2019.
\newblock \doi{10.1093/bioinformatics/btz637}.
\newblock URL \url{https://doi.org/10.1093/bioinformatics/btz637}.

\bibitem[Genovese and Wasserman(2002)]{genovese2002operating}
Christopher Genovese and Larry Wasserman.
\newblock Operating characteristics and extensions of the false discovery rate
  procedure.
\newblock \emph{Journal of the Royal Statistical Society: Series B (Statistical
  Methodology)}, 64\penalty0 (3):\penalty0 499--517, 2002.

\bibitem[Genovese et~al.(2012)Genovese, Jin, Wasserman, and
  Yao]{genovese2012comparison}
Christopher~R Genovese, Jiashun Jin, Larry Wasserman, and Zhigang Yao.
\newblock A comparison of the lasso and marginal regression.
\newblock \emph{The Journal of Machine Learning Research}, 13\penalty0
  (1):\penalty0 2107--2143, 2012.

\bibitem[Gnedenko(1943)]{gnedenko1943distribution}
Boris Gnedenko.
\newblock Sur la distribution limite du terme maximum d'une serie aleatoire.
\newblock \emph{Annals of mathematics}, pages 423--453, 1943.

\bibitem[Hochberg(1988)]{hochberg1988sharper}
Yosef Hochberg.
\newblock A sharper bonferroni procedure for multiple tests of significance.
\newblock \emph{Biometrika}, 75\penalty0 (4):\penalty0 800--802, 1988.

\bibitem[Holm(1979)]{holm1979simple}
Sture Holm.
\newblock A simple sequentially rejective multiple test procedure.
\newblock \emph{Scandinavian journal of statistics}, pages 65--70, 1979.

\bibitem[Inglot(2010)]{inglot2010inequalities}
Tadeusz Inglot.
\newblock Inequalities for quantiles of the chi-square distribution.
\newblock \emph{Probability and Mathematical Statistics}, 30\penalty0
  (4):\penalty0 339--351, 2010.

\bibitem[Ji and Jin(2012)]{ji2012ups}
Pengsheng Ji and Jiashun Jin.
\newblock {UPS} delivers optimal phase diagram in high-dimensional variable
  selection.
\newblock \emph{The Annals of Statistics}, 40\penalty0 (1):\penalty0 73--103,
  2012.

\bibitem[Jin and Ke(2016)]{jin2016rare}
Jiashun Jin and Zheng~Tracy Ke.
\newblock Rare and weak effects in large-scale inference: methods and phase
  diagrams.
\newblock \emph{Statistica Sinica}, pages 1--34, 2016.

\bibitem[Jin et~al.(2014)Jin, Zhang, and Zhang]{jin2014optimality}
Jiashun Jin, Cun-Hui Zhang, and Qi~Zhang.
\newblock Optimality of graphlet screening in high dimensional variable
  selection.
\newblock \emph{The Journal of Machine Learning Research}, 15\penalty0
  (1):\penalty0 2723--2772, 2014.

\bibitem[Ke et~al.(2014)Ke, Jin, and Fan]{ke2014covariance}
Tracy Ke, Jiashun Jin, and Jianqing Fan.
\newblock Covariance assisted screening and estimation.
\newblock \emph{Annals of statistics}, 42\penalty0 (6):\penalty0 2202, 2014.

\bibitem[MacArthur et~al.(2016)MacArthur, Bowler, Cerezo, Gil, Hall, Hastings,
  Junkins, McMahon, Milano, Morales, et~al.]{macarthur2016new}
Jacqueline MacArthur, Emily Bowler, Maria Cerezo, Laurent Gil, Peggy Hall, Emma
  Hastings, Heather Junkins, Aoife McMahon, Annalisa Milano, Joannella Morales,
  et~al.
\newblock The new nhgri-ebi catalog of published genome-wide association
  studies (gwas catalog).
\newblock \emph{Nucleic acids research}, 45\penalty0 (D1):\penalty0 D896--D901,
  2016.

\bibitem[Michailidou et~al.(2017)Michailidou, Lindstr{\"o}m, Dennis, Beesley,
  Hui, Kar, Lema{\c{c}}on, Soucy, Glubb, Rostamianfar,
  et~al.]{michailidou2017association}
Kyriaki Michailidou, Sara Lindstr{\"o}m, Joe Dennis, Jonathan Beesley, Shirley
  Hui, Siddhartha Kar, Audrey Lema{\c{c}}on, Penny Soucy, Dylan Glubb, Asha
  Rostamianfar, et~al.
\newblock Association analysis identifies 65 new breast cancer risk loci.
\newblock \emph{Nature}, 551\penalty0 (7678):\penalty0 92, 2017.

\bibitem[Neuvial and Roquain(2012)]{neuvial2012false}
Pierre Neuvial and Etienne Roquain.
\newblock On false discovery rate thresholding for classification under
  sparsity.
\newblock \emph{The Annals of Statistics}, 40\penalty0 (5):\penalty0
  2572--2600, 2012.

\bibitem[{\v{S}}id{\'a}k(1967)]{vsidak1967rectangular}
Zbyn{\v{e}}k {\v{S}}id{\'a}k.
\newblock Rectangular confidence regions for the means of multivariate normal
  distributions.
\newblock \emph{Journal of the American Statistical Association}, 62\penalty0
  (318):\penalty0 626--633, 1967.

\bibitem[Storey(2007)]{storey2007optimal}
John~D Storey.
\newblock The optimal discovery procedure: a new approach to simultaneous
  significance testing.
\newblock \emph{Journal of the Royal Statistical Society: Series B (Statistical
  Methodology)}, 69\penalty0 (3):\penalty0 347--368, 2007.

\bibitem[Sun and Cai(2007)]{sun2007oracle}
Wenguang Sun and T~Tony Cai.
\newblock Oracle and adaptive compound decision rules for false discovery rate
  control.
\newblock \emph{Journal of the American Statistical Association}, 102\penalty0
  (479):\penalty0 901--912, 2007.

\bibitem[Wainwright(2019)]{wainwright2019high}
Martin~J Wainwright.
\newblock \emph{High-dimensional statistics: A non-asymptotic viewpoint}.
\newblock Cambridge University Press, Cambridge, {U}{K}, 2019.

\end{thebibliography}

%
%
%
%
%
%

\end{document}